\newenvironment{customthm}[1]
  {\innercustomthm}
  {\endinnercustomthm}
\newtheorem{lemma}{Lemma}[section]
\newtheorem{theorem}[lemma]{Theorem}
\newtheorem*{theorem*}{Theorem}
\newtheorem{corollary}[lemma]{Corollary}
\newtheorem{question}{Question}
\newtheorem{proposition}[lemma]{Proposition}
\newtheorem*{proposition*}{Proposition}
\newtheorem*{conjecture*}{Conjecture}
\newtheorem{conjecture}{Conjecture}
\newtheorem{problem}{Problem}
\newtheorem{remark}[lemma]{Remark}
\theoremstyle{remark}
\theoremstyle{definition}
\newtheorem*{definition*}{Definition}
\newtheorem*{remark*}{Remark}
\newtheorem*{remarks*}{Remarks}
\newtheorem*{claim*}{Claim}
\newtheorem*{example}{Example}
\newcommand{\nnorm}[1]{\lvert\!|\!| #1|\!|\!\rvert}
\title{}
\author{}
\date{} 
\begin{document}

\title[Total joint ergodicity for totally ergodic systems]{Total joint ergodicity for totally ergodic systems} 

\author{Andreas~Koutsogiannis and Wenbo~Sun}

\address[Andreas Koutsogiannis]{Department of Mathematics, Aristotle University of Thessaloniki, Thessaloniki, 54124, Greece} \email{akoutsogiannis@math.auth.gr}
\address[Wenbo Sun]{\textsc{Department of Mathematics, Virginia Tech, 225 Stanger Street, Blacksburg, VA, 24061, USA}}
\email{swenbo@vt.edu}

\begin{abstract}
Examining multiple ergodic averages whose iterates are integer parts of real valued polynomials for totally ergodic systems, we provide various characterizations of total joint ergodicity, meaning that an average converges to the ``expected'' limit along every arithmetic progression. In particular, we obtain a complete characterization when the number of iterates is at most two, and disprove a conjecture of the first author. We also improve a result of Frantzikinakis on joint ergodicity of Hardy field functions of at most polynomial growth for totally ergodic systems, which extends a conjecture of Bergelson-Moreira-Richter.
Our method is to first use the methodology of Frantzikinakis, which allows one to reduce the systems to rotations on abelian groups without using deep tools from ergodic theory, then develop formulas for integrals of exponential functions over subtori, and finally, compute exponential sums for integer parts of real polynomials.
\end{abstract}

\subjclass[2020]{Primary: 37A30; Secondary: 28D05, 11L03, 11L15}

\keywords{multiple ergodic averages, totally ergodic systems, equidistribution, Gowers-Host-Kra seminorms, Hardy field/ tempered/ Pfaffian/ real polynomial functions, Weyl sums}

\maketitle

\tableofcontents

\section{Introduction}\label{Sec:1} 

\subsection{The joint ergodicity and total joint ergodicity phenomena}

Let $(X,\mathcal{B},\mu)$ be a standard Borel probability space. If $T:X\to X$ is a measurable transformation which preserves the measure $\mu$ (i.e., $\mu(T^{-1}A)=\mu(A)$ for all $A\in\mathcal{B}$), then we say that the quadruple $(X,\mathcal{B},\mu,T)$ is a \emph{measure preserving system} (or just a \emph{system}). We will assume in what follows that the transformation $T$ is invertible.
We are interested in the $L^2$-limiting behavior, as $N\to\infty,$ of multiple ergodic averages of the form
\begin{equation}\label{E:main_expression}
\frac{1}{N}\sum_{n=1}^N T^{a_1(n)}f_1\cdot\ldots\cdot T^{a_\ell(n)}f_\ell,
\end{equation}
where the $(a_i(n))_n,$ $1\leq i\leq \ell,$ are 
suitable integer valued sequences, and the $f_i$'s are arbitrary bounded functions; for a positive integer $n,$ and a bounded function $f,$ 
$T^n$ denotes  the composition $T\circ \dots\circ T$ of $n$ copies of $T$, and $Tf(x):= f(Tx),$ $x\in X.$

In particular, we are interested in the case where the limit is the ``expected'' one.

\begin{definition*}
For a collection of sequences $a_1,\ldots,a_\ell:\mathbb{N}\to \mathbb{Z},$ and a system $(X,\mathcal{B},\mu,T),$ we say that $(a_1(n))_n,\ldots,$ $(a_\ell(n))_n$ are
\begin{itemize}
    \item \emph{jointly ergodic for $(X,\mathcal{B},\mu,T)$}, if for all functions $f_1,\ldots,f_\ell\in L^\infty(\mu)$ we have 
    \begin{equation*}\label{E:correct_limit}
        \lim_{N\to\infty}\frac{1}{N}\sum_{n=1}^N T^{a_1(n)}f_1\cdot\ldots\cdot T^{a_\ell(n)}f_\ell=\int_X f_1\;d\mu\cdot\ldots\cdot\int_X f_\ell\;d\mu,
    \end{equation*} 
    where the convergence is in $L^2(\mu).$ For $\ell=1,$ we say that $(a_1(n))_n$ is \emph{ergodic}.

    \item  \emph{totally jointly ergodic for $(X,\mathcal{B},\mu,T)$}, if for all functions $f_{1},\dots,f_{\ell}\in L^{\infty}(\mu)$, $W\in\mathbb{N},$ and $r\in\mathbb{Z},$ we have 
\[\lim_{N\to\infty}\frac{1}{N}\sum_{n=1}^{N}T^{a_{1}(Wn+r)}f_{1}\cdot\ldots\cdot T^{a_{\ell}(Wn+r)}f_{\ell}=\int_{X}f_{1}\,d\mu\cdot\ldots\cdot \int_{X}f_{\ell}\,d\mu,\]
where the convergence is in $L^2(\mu).$ For $\ell=1,$ we say that $(a_1(n))_n$ is \emph{totally ergodic}.\footnote{The notions of ``ergodicity'' and ``total ergodicity'' of a sequence are in general different. Let $(X,\mathcal{B},\mu,T)$ be a system. $T$ is \emph{ergodic} (in which case we also call the system $(X,\mathcal{B},\mu,T)$ \emph{ergodic}) if whenever $T^{-1}A=A,$ we have $\mu(A)\in \{0,1\}$ (i.e., there are no $T$-invariant sets of non-trivial measure), and $T$ is \emph{totally ergodic} (where we also call the system $(X,\mathcal{B},\mu,T)$ \emph{totally ergodic}) if $T^n$ is ergodic for all $n\in\mathbb{N}.$ Taking an ergodic transformation $T$ which is not totally ergodic, say $T^k$ is not ergodic for some $k\in\mathbb{N}$, we have from von Neumman's mean ergodic theorem that $(n)_n$ is an ergodic sequence, but $(kn+r)_n$ is not for any $r\in \mathbb{Z}$.}
\end{itemize}
\end{definition*}

Studying \eqref{E:main_expression} for $a_i(n)=in$, $1\leq i\leq \ell,$ Furstenberg managed to reprove (in \cite{Fu}) Szemer\'{e}di's theorem on arithmetic progressions.\footnote{ Every 
dense subset of natural numbers contains arbitrarily long arithmetic progressions.} In the same article, he also showed that the sequences $(n)_n,\ldots, (\ell n)_n$ are (totally) jointly ergodic for every weakly mixing system.\footnote{ We call a system $(X,\mathcal{B},\mu,T)$ \emph{weakly mixing} if $T$ is \emph{weakly mixing}, meaning that $T\times T$ is ergodic.} 

%Extending this last result, also introducing the PET (i.e., Polynomial Exhaustion Technique) induction scheme, which led to a number of far-reaching extensions of Szemer\'{e}di's theorem with applications in various areas of mathematics, 
A few years later, Bergelson showed (in \cite{Be}) that for all essentially distinct integer polynomials $p_1,\ldots,p_\ell$,\footnote{A polynomial $p\in \mathbb{Q}[x]$ is called \emph{integer} if $p(\mathbb{Z})\subseteq\mathbb{Z}$ (e.g., $p(n)=n(n+1)/2$). The non-constant polynomials $p_1,\ldots,p_\ell$ are \emph{essentially distinct} if for every $i\neq j,$ $p_i-p_j$ is non-constant.} the sequences $(p_1(n))_n,\ldots,(p_\ell(n))_n$ are (totally) jointly ergodic for every weakly mixing system. With this result not only he extended Furstenberg's convergence result on weakly mixing systems, but also introduced the PET (i.e., Polynomial Exhaustion Technique) induction scheme, which led to a number of far-reaching extensions of Szemer\'{e}di's theorem with applications in various areas of mathematics. 

It is natural to ask whether one can still get (total) joint ergodicity results by postulating less assumptions on the system while simultaneously strengthening the assumptions on the polynomial iterates. In this article, our main focus is to study total joint ergodicity properties for totally ergodic systems. To this end, it is convenient for us to have the following definition of independence of polynomials.

\begin{definition*}	
	Let $V\subseteq\mathbb{R}$ with $0\in V$ and $p_{1},\dots,p_{\ell}\in\mathbb{R}[x]$. We say that
 $p_{1},\dots,p_{\ell}$ are \emph{$V$-independent} if $c_{1}p_{1}+\dots+c_{\ell}p_{\ell}\in\mathbb{Q}[x]+\mathbb{R}, c_{i}\in V, 1\leq i\leq \ell,$ implies that $c_{1}=\dots=c_{\ell}=0$; otherwise, we call the $p_i$'s \emph{$V$-dependent}.
\end{definition*}

 Note that for any finite family of real polynomials $p_1,\ldots,p_\ell$, we have that
 \begin{equation}\label{pwr}
 \begin{split}
& \text{if $p_1,\ldots,p_\ell$ are $V$-independent}, 
\\& \text{then $p_1(W\cdot+r),\ldots,p_\ell(W\cdot+r)$ are $V$-independent for all $W\in\mathbb{N}$ and $r\in\mathbb{Z}$}.
 \end{split}
 \end{equation}

Frantzikinakis and Kra showed (in \cite{FK2}) 
 that if $p_1,\ldots,p_\ell$ are $\mathbb{R}\setminus \mathbb{Q}_\ast$-independent integer polynomials,\footnote{ In \cite{FK2} such polynomials are called \emph{rationally independent} (the independence condition there is presented in a different, equivalent, formulation).} then $(p_1(n))_n,\ldots,$ $(p_\ell(n))_n$ are (totally) jointly ergodic for every totally ergodic system.
  Moreover, in that result, the independence assumption on the polynomial family is also necessary (see comments after \cite[Theorem~1.1]{FK2}, and Theorem~\ref{cor3} below).\footnote{  For characterizations of (total) joint ergodicity for integer polynomials, see also \cite{DFKS, DKS, FraKu, FraKu2}. Actually, all the results we stated up to this point were dealing with the notion of ``joint ergodicity''; it is because of \eqref{pwr} that we can upgrade their conclusions to ``total joint ergodicity''.}

Karageorgos and the first author showed (in \cite{KK}) that if $p_1,\ldots,p_\ell$ are $\mathbb{R}$-independent real polynomials,\footnote{ In \cite{KK} such polynomials are called \emph{strongly independent}.} then $([p_1(n)])_n,\ldots,([p_\ell(n)])_n$ are (totally) jointly ergodic for every ergodic system.\footnote{See \cite{K} for a generalization of this result for sequences of variable real polynomials.} 
%What lies in the heart of the proof of this result, is an application of Weyl's criterion (see Remark~\ref{rem1} (ii)) to obtain an equidistribution result on nilmanifolds for ergodic nilrotations (\cite[Proposition~4.3]{KK}). 

A conjecture\footnote{ This conjecture was never recorded  anywhere formally.} of the first author of this article, following the philosophy of the results that were stated above, i.e., postulating a stronger assumption on the system while simultaneously assuming a weaker independence condition on the polynomial iterates, is that one should expect the following.
 
 \begin{conjecture}\label{Conjecture:1}
 	For $\ell\in \mathbb{N},$ let $p_1,\ldots,p_\ell\in \mathbb{R}[x].$ $p_1,\ldots,p_\ell$  are $\mathbb{R}\backslash\mathbb{Q}_\ast$-independent if, and only if, $([p_1(n)])_n,$ $\ldots,([p_\ell(n)])_n$ are totally jointly ergodic for every totally ergodic system.
 \end{conjecture}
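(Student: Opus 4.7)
The statement is an equivalence, so the plan is to prove the two directions separately.

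For the necessity (total joint ergodicity $\Rightarrow$ $\mathbb{R}\setminus\mathbb{Q}_*$-independence), I would mimic the corresponding counterexample constructions in \cite{KK, FK2}. Suppose $p_1,\ldots,p_\ell$ are $\mathbb{R}\setminus\mathbb{Q}_*$-dependent, witnessed by a nontrivial relation $c_1 p_1 + \cdots + c_\ell p_\ell = q + r_0$ with $q\in\mathbb{Q}[x]$, $r_0\in\mathbb{R}$, and $c_i\in\mathbb{R}\setminus\mathbb{Q}_*$. Build a totally ergodic rotation on a torus $\mathbb{T}^m$ whose rotation vector encodes the $c_i$'s times an auxiliary irrational $\beta$; test the ergodic average against appropriately chosen characters so that the product collapses to
\[
\frac{1}{N}\sum_{n=1}^{N} e^{2\pi i \beta \sum_{i=1}^\ell c_i \lfloor p_i(n)\rfloor}.
\]
Using $\lfloor p_i(n)\rfloor = p_i(n) - \{p_i(n)\}$ and the relation, this sum is driven by the rational polynomial $\beta q(n)$ modulo a fractional-part correction that, on a suitable arithmetic progression $Wn+r$ (with $W$ a common multiple of the denominators appearing in $q$), does not wipe out the bias and produces a nonzero limit, contradicting total joint ergodicity.

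For the sufficiency I would follow the strategy announced in the abstract of the paper. First, use Frantzikinakis' reduction methodology to pass from an arbitrary totally ergodic system to a rotation on a compact abelian group, turning the question into equidistribution. Second, invoke Weyl's criterion: total joint ergodicity reduces to showing, for every nonzero $(m_1,\ldots,m_\ell)\in\mathbb{Z}^\ell$, every $W\in\mathbb{N}$, every $r\in\mathbb{Z}$, and the relevant rotation frequencies $\alpha_i$, that
\[
\frac{1}{N}\sum_{n=1}^{N} \exp\!\left(2\pi i \sum_{i=1}^{\ell} m_i \alpha_i \lfloor p_i(Wn+r)\rfloor\right) \longrightarrow 0.
\]
Third, decompose $\lfloor p_i(Wn+r)\rfloor = p_i(Wn+r) - \{p_i(Wn+r)\}$ and attempt to estimate the resulting Weyl sums using van~der~Corput differencing together with the exponential-sum machinery for integer parts of real polynomials developed in the paper, hoping that $\mathbb{R}\setminus\mathbb{Q}_*$-independence of the $p_i$'s suffices.

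The expected main obstacle is this last step. The naive hope is that $\mathbb{R}\setminus\mathbb{Q}_*$-independence forces $\sum_i m_i\alpha_i\, p_i(Wx+r)$ to be a real polynomial with an irrational non-constant coefficient, so that its Weyl sum decays. The difficulty is the fractional-part correction $\exp\!\left(-2\pi i \sum_i m_i\alpha_i \{p_i(Wn+r)\}\right)$: its analysis requires controlling the joint distribution of the $\{p_i(Wn+r)\}$, yet the hypothesis of $\mathbb{R}\setminus\mathbb{Q}_*$-independence constrains only those combinations of the $p_i$'s whose coefficients lie in $\mathbb{R}\setminus\mathbb{Q}_*$, whereas the effective coefficient vector $(m_i\alpha_i)_i$ arising from the rotation data and the Fourier variable is not forced to lie in this set. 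I therefore expect this step to fail in general and, rather than proving the conjecture, for this strategy to culminate in an explicit family of $\mathbb{R}\setminus\mathbb{Q}_*$-independent real polynomials (most plausibly of low degree, with $\ell\geq 2$ and pairwise $\mathbb{Q}$-independent irrational leading coefficients) together with a totally ergodic rotation for which total joint ergodicity fails along some progression $Wn+r$, thereby disproving the conjecture.
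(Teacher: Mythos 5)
Your proposal has the two directions of the conjecture exactly inverted relative to what is actually true, and the paper's resolution is that the conjecture, as an equivalence, is \emph{false}: the sufficiency part holds (Theorem~\ref{cor1}), while the necessity part is disproved. Your plan for necessity (build, from a relation $c_1p_1+\dots+c_\ell p_\ell\in\mathbb{Q}[x]+\mathbb{R}$ with $c_i\in\mathbb{R}\backslash\mathbb{Q}_{\ast}$, a totally ergodic rotation and a character whose averaged exponential sum has a nonzero limit) is precisely the strategy of Lemma~\ref{construct} combined with Proposition~\ref{thm2}, but that argument needs the additional hypothesis that the irrational polynomials among the dependent subfamily are $\mathbb{Q}$-independent: only then do the fractional parts $(\{p_i(W!n)\})_n$ equidistribute on the full torus, so that the limit factors as $\prod_i\int_0^1 e(-c_ix)\,dx\neq 0$. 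Without that hypothesis the step ``the fractional-part correction does not wipe out the bias'' genuinely fails: when the $p_i$ are $\mathbb{Q}$-dependent the fractional parts are confined to a proper subtorus, and the relevant integrals (computed in Sections~\ref{sec:4}--\ref{Section:6}) can vanish for \emph{every} admissible choice of spectral coefficients. Concretely, $p_1(n)=n^2+cn$, $p_2(n)=n^2+(c+1)n$ with $c\in\mathbb{R}\backslash\mathbb{Q}$ (the pair of Proposition~\ref{apd}) satisfy $(c+1)p_1-cp_2=n^2\in\mathbb{Q}[x]$, hence are $\mathbb{R}\backslash\mathbb{Q}_{\ast}$-dependent, yet by Theorem~\ref{cor6}(ii) the sequences $([p_1(n)])_n,([p_2(n)])_n$ are totally jointly ergodic for every totally ergodic system; so the counterexample system you intend to construct does not exist, and no proof of necessity can succeed in general (it does hold for $\ell=1$, Corollary~\ref{cor55}, and for the special classes of Theorem~\ref{cor3}).

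Conversely, the direction you predict will fail is the one the paper proves (Theorem~\ref{cor1}, via Theorem~\ref{T:TP} and Proposition~\ref{thm1_TP}), and your anticipated obstacle does not arise. After Frantzikinakis' reduction one only needs \eqref{E:equid} for coefficients $t_i\in\text{Spec}(T)$, and for a totally ergodic $T$ one has $S(T)=\text{Spec}(T)+\mathbb{Z}\subseteq(\mathbb{R}\setminus\mathbb{Q})\cup\mathbb{Z}$; coefficients in $\mathbb{Z}$ act trivially on the integers $[p_i(n)]$ and can be discarded, so every surviving effective coefficient is irrational or zero --- exactly the set that $\mathbb{R}\backslash\mathbb{Q}_{\ast}$-independence is designed to control (this is the point of allowing zero coefficients, the improvement of Theorem~\ref{T:TP} over Theorem~\ref{T:1.7}). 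The fractional-part correction is not attacked by van der Corput differencing but by approximating $e(-\sum_i t_i\{x_i\})$ by trigonometric polynomials, which only shifts each irrational $t_i$ by an integer and keeps it irrational; then $\sum_i(t_i+k_i)p_i\notin\mathbb{Q}[x]+\mathbb{R}$ by the independence hypothesis, and Weyl's theorem gives the required decay. Since $\mathbb{R}\backslash\mathbb{Q}_{\ast}$-independence is preserved under $n\mapsto Wn+r$ (see \eqref{pwr}), this yields total joint ergodicity, so the family of independent polynomials violating the conclusion that you expect to find cannot exist. The correct outcome, which your proposal misses on both sides, is: sufficiency true, necessity false.
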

 
Our first result is a positive answer to the sufficiency part of this conjecture.

\begin{customthm}{A}\label{cor1}
For $\ell\in \mathbb{N},$ let $p_1,\ldots,p_\ell\in \mathbb{R}[x].$ If $p_{1},\dots,p_{\ell}$ are $\mathbb{R}\backslash\mathbb{Q}_{\ast}$-independent, then $([p_{1}(n)])_n,\dots,$  $([p_{\ell}(n)])_n$ are totally jointly ergodic for every totally ergodic system.
\end{customthm}

In fact, we prove a result more general than
Theorem~\ref{cor1}, namely, Theorem~\ref{T:TP}, which deals with combinations of polynomials and tempered Hardy field functions.
Our proof relies on an improvement of a recent result of Frantzikinakis (\cite[Theorem~1.9 (i)]{F2}--see Theorem~\ref{T:1.7}) on joint ergodicity, which also implies a generalization of \cite[Conjecture~6.1]{BMR}. We defer the details to Subsection~\ref{s112}.

As we mentioned above, the necessity part of Conjecture \ref{Conjecture:1} holds when $p_{1},\dots,p_{\ell}$ are integer polynomials. 
In the case $\ell=1$, we show that Conjecture~\ref{Conjecture:1} holds in general (see Corollary~\ref{cor55} which is derived from Theorem~\ref{cor3}). On the other hand, we (somewhat surprisingly) discover that the necessity part of Conjecture \ref{Conjecture:1} is false in general. In fact, we obtain necessary and sufficient conditions for total joint ergodicity of two real polynomials $p_{1},p_{2}$ with $p_{1}(0)=p_{2}(0)=0,$ which is the main result of the article.

\begin{customthm}{B}\label{cor6}
	For all $p_{1},p_{2}\in\mathbb{R}[x]$ with $p_{1}(0)=p_{2}(0)=0$, $([p_{1}(n)])_n, ([p_{2}(n)])_{n}$ are totally jointly ergodic for all totally ergodic systems if, and only if, the following holds:
	\begin{itemize}
	    \item[(i)] $p_{1},p_{2}$ are $\mathbb{R}\backslash\mathbb{Q}_{\ast}$-independent; or
	    \item[(ii)] $p_1=f+cg,\; p_2=u(f+(c+1)g)$ for some $f,g\in\mathbb{Q}[x],$ where $f$ is not a multiple of $g$,\footnote{ By this we mean that there is no $s\in\mathbb{Q}$ such that $f=sg.$}  $f(0)=g(0)=0,$ $g\not\equiv 0,$ $c\in \mathbb{R}\backslash\mathbb{Q},$ with $u=\pm 1,$ such that $g$ is an integer polynomial.
	\end{itemize}
\end{customthm}

We remark that $p_{1}(0)=p_{2}(0)=0$ is an arguably common (and sometimes necessary) assumption in the study of multiple recurrence properties. However, it is  natural to ask whether one can get a characterization for polynomials with non-trivial constant terms (see Question~\ref{Q:2} below). Unlike the case of integer polynomials, the constant terms bring many additional difficulties in the study of the equidistribution properties for iterates of integer parts of real polynomials which are not integer ones. Indeed, it can happen that for two polynomials $p_1, p_2,$ in some totally ergodic system and for some $a\in\mathbb{R},$ $([p_1(n)])_n,$ $([p_2(n)])_n$ behave differently than $([p_1(n)])_n,$ $([p_2(n)+a])_n$ (see Proposition~\ref{apd}).

%We leave the proof of Proposition \ref{apd} for the appendix (Section~\ref{app}).

\medskip

Theorem \ref{cor6} indicates that a characteristic condition for more than two polynomials could be very intricate (see Problem~\ref{P:1}).
 We also note that our method doesn't generalize to more than two terms (see Proposition~\ref{case2}, the argument of which works only for $\ell=2$). 
Nevertheless, we were able to obtain a partial characterization in the general case.

\begin{customthm}{C}\label{cor2}
	For $\ell\in\mathbb{N},$ let  $p_{1},\dots,p_{\ell}\in\mathbb{R}[x]$. 
	If there exists a nonempty subset $\{i_{1},\dots,i_{k}\}$ $\subseteq\{1,\dots,\ell\}$ such that $p_{i_{1}},\dots,p_{i_{k}}$ are $\mathbb{R}\backslash\mathbb{Q}_{\ast}$-dependent and all the irrational polynomials\footnote{ By this we mean polynomials which don't belong to $\mathbb{Q}[x]+\mathbb{R}.$} in $p_{i_{1}},\dots,p_{i_{k}},$ if any, are $\mathbb{Q}$-independent, 
	then there exists a totally ergodic system $(X,\mathcal{B},\mu,T)$  such that $([p_{1}(n)])_n,\ldots,$ $([p_{\ell}(n)])_n$ are not 
	totally jointly ergodic for $(X,\mathcal{B},\mu,T)$.
\end{customthm}

 By combining Theorems~\ref{cor1} and ~\ref{cor2}, we deduce that for  two special classes of polynomials, the condition in Conjecture~\ref{Conjecture:1} is characteristic.
	
\begin{customthm}{D}\label{cor3}
For $\ell\in\mathbb{N},$ let  $p_{1},\dots,p_{\ell}\in\mathbb{Q}[x]+\mathbb{R}$ (resp. $p_{1},\dots,p_{\ell}\in\mathbb{R}[x]$ so that all the irrational polynomials in $p_1,\dots,p_\ell,$ if any, are $\mathbb{Q}$-independent). $p_{1},\dots,p_{\ell}$ are $\mathbb{R}\backslash\mathbb{Q}_{\ast}$-independent if, and only if, $([p_{1}(n)])_n,\ldots,([p_{\ell}(n)])_n$ are totally jointly ergodic for every totally ergodic system.
\end{customthm}
	
Noticing that the independence condition for integer polynomials of Frantzikinakis and Kra is equivalent to  $\mathbb{R}\backslash\mathbb{Q}_{\ast}$-independence,  we have that Theorem~\ref{cor3} generalizes the main theorem of \cite{FK2} as well. 

\subsection{Improvement of a result on joint ergodicity}\label{s112}
One of the main ingredients of this article is a recent criterion of Frantzikinakis (\cite[Theorem~1.1]{F2}--see Theorem~\ref{T:1.1_Nikos}). In \cite{F2}, Frantzikinakis provided an approach to study joint ergodicity properties without using  either the Host-Kra theory of characteristic factors  or equidistribution results on nilmanifolds, the combination of which is arguably the most influential approach to characterize the convergence of \eqref{E:main_expression} to the expected limit when the sequences $a_{1},\dots,a_{\ell}$ are  coming from suitable classes of functions.
Using this result, not only he obtained new convergence results, but also provided simpler proofs for most of the known ones.

 As in \cite{F2}, we assume that all Hardy fields $\mathcal{H}$ considered (see Subsection~\ref{SS:HT} for the definition  of a Hardy field and of functions in such a field) have the property:
\begin{equation}\label{E:Hardy}
\text{If}\; a,b\in \mathcal{H}, \;\text{then}\; a\circ b^{-1}\in \mathcal{H}, \;\text{and}\;a(\cdot +h)\in \mathcal{H}\;\text{for all}\;h\geq 0.
\end{equation}

For two functions $a,b:(x_0,\infty)\to\mathbb{R},$ we write $a\prec b$ if $|a(x)|/|b(x)|\to 0$ as $x\to\infty.$

\begin{definition*}
We say that a Hardy field function $a:(x_0,\infty)\to\mathbb{R},$ $x_0\geq 0,$
\begin{itemize}
    \item [(i)] is \emph{tempered} (and we write $a\in \mathcal{T}$) if there exists $k\in\mathbb{N}:$ $x^{k-1}\log x\prec a(x)\prec x^k.$
    
    \item[(ii)] \emph{stays logarithmically away from rational polynomials} if $\log x\prec a(x)-p(x)$ for all $p\in \mathbb{Q}[x].$\footnote{  Or, equivalently, for all $p\in \mathbb{Q}[x]+\mathbb{R}.$ Notice here the similarities between this condition and Weyl's criterion on equidistribution (in $\mathbb{T}$) when $a$ is restricted to $\mathcal{P}$ (see Remark~\ref{rem1} (ii)).} 
\end{itemize}
\end{definition*}

When the functions $a_1,\ldots,a_\ell$ and their differences are in $\mathcal{T}+\mathcal{P},$\footnote{ As $\mathcal{T}$ denotes the class of tempered functions, and $\mathcal{P}$ the class of real polynomials, $\mathcal{T} + \mathcal{P}$ denotes the class of all linear
combinations of tempered functions and real polynomials. The reason why it is natural to work with these classes, is that differences of them (i.e., derivatives) fall into the same classes, allowing one to study the corresponding  averages by using variations of the PET induction, via van der Corput's lemma (approach that was initiated in \cite{Be}). For (total) joint ergodicity results for iterates coming from a Hardy field, see \cite{BMR, DKS3, F3, F4, Ts}.} Frantzikinakis also showed the following result which resolves \cite[Conjecture~6.1]{BMR}.

\begin{theorem}[Theorem~1.9 (i), \cite{F2}]\label{T:1.7}
For $\ell\in\mathbb{N},$ let $a_1,\ldots,a_\ell:(x_0,+\infty)\to\mathbb{R}$ be functions from a Hardy field $\mathcal{H}$ that satisfies \eqref{E:Hardy}. Suppose that
the $a_i$'s and their differences are in $\mathcal{T}+\mathcal{P}$ and every non-trivial linear combination
of them with at least one irrational coefficient, stays logarithmically away from rational polynomials. %\footnote{ The notion ``stays logarithmically away from rational polynomials'' will also be defined in Subsection~\ref{SS:HT}.}
Then, the sequences $([a_1(n)])_n,\ldots,([a_\ell(n)])_n$ are jointly ergodic for every totally ergodic system.
\end{theorem}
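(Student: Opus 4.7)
The plan is to apply Frantzikinakis' criterion Theorem~\ref{T:1.1_Nikos} from \cite{F2}, which reduces the joint ergodicity of integer-valued sequences $(b_1(n))_n,\ldots,(b_\ell(n))_n$ in an ergodic system to two conditions: (a) a Gowers--Host--Kra seminorm control property for each $b_i$ individually, and (b) a joint equidistribution condition for exponential sums against every non-trivial character of the Kronecker factor of $T$. Since $T$ is totally ergodic, these characters are $n \mapsto e(\beta n)$ with $\beta$ irrational (or $\beta=0$), so (b) reduces to showing that
\[
\frac{1}{N}\sum_{n=1}^{N} e\Bigl(\beta n + \sum_{i=1}^\ell k_i [a_i(n)]\Bigr) \longrightarrow 0
\]
for all integers $k_1,\ldots,k_\ell$ and reals $\beta$ satisfying a mild non-triviality condition.

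For condition (a), I would run a PET induction on the family $\{a_1,\ldots,a_\ell\}$, in the spirit of \cite{Be,BMR,F2}. The hypothesis that the $a_i$'s and their pairwise differences all lie in $\mathcal{T}+\mathcal{P}$ is tailor-made for this: each van der Corput step replaces an iterate by a difference from the same class and lowers its ``degree,'' while the tempered lower bound $x^{k-1}\log x \prec a(x)$ rules out degeneration into a near-rational-polynomial iterate that would break the induction. After finitely many steps one arrives at linear or trivial iterates, yielding a seminorm bound of bounded Host--Kra level.

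For condition (b), I would replace $e(t[a_i(n)])$ by $e(t\,a_i(n))$ using a Fej\'er-kernel smoothing (or the identity $e(t[x])=e(tx)e(-t\{x\})$ combined with a Fourier series of the sawtooth), reducing everything to Weyl sums of the form $\frac{1}{N}\sum_n e\bigl(\beta n + \sum s_i a_i(n)\bigr)$ for real frequencies $s_i$. The hypothesis that every non-trivial real-linear combination of the $a_i$'s with at least one irrational coefficient stays logarithmically away from rational polynomials is precisely Boshernitzan-type criterion for equidistribution of Hardy field functions, so each such Weyl sum tends to $0$; combining with the seminorm bound above completes the verification of Theorem~\ref{T:1.1_Nikos} and hence of the theorem.

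The main obstacle I expect is making the Fej\'er-kernel transfer from $[a_i(n)]$ to $a_i(n)$ genuinely uniform: since the kernel spreads integer frequencies over a continuous range of reals, one must verify that the ``logarithmically away from rational polynomials'' hypothesis is preserved (or quantitatively stable) as $s_i$ runs over this range, and that the resulting truncation error terms can be absorbed by the seminorm bounds from step~(a). Carefully tracking this interplay between Fourier analysis on $\mathbb{T}$ and the PET induction in $\mathcal{T}+\mathcal{P}$ is where the bulk of the technical work should concentrate.
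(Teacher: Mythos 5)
Your overall route is the same as the paper's: invoke Frantzikinakis' criterion (Theorem~\ref{T:1.1_Nikos}), take the seminorm estimates as a black box from \cite[Proposition~6.5]{F2} (Proposition~\ref{6.5_Nikos}; there is no need to rerun PET), and verify the ``good for equidistribution'' condition by writing $e(t[x])=e(tx)e(-t\{x\})$, approximating the fractional-part factor by trigonometric polynomials, and concluding with Boshernitzan's theorem (Theorem~\ref{T:Bos}). This is exactly the argument of Proposition~\ref{thm1_TP} (following \cite[Lemma~6.2]{F2}), which the paper uses to prove the stronger Theorem~\ref{T:TP} and hence Theorem~\ref{T:1.7}.

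Two points need correcting. First, your statement of condition (b) is not the criterion of Theorem~\ref{T:1.1_Nikos}, and as written it is vacuous: with integer coefficients $k_i$ the quantity $\sum_i k_i[a_i(n)]$ is an integer, so $\frac{1}{N}\sum_{n=1}^N e\bigl(\beta n+\sum_i k_i[a_i(n)]\bigr)=\frac{1}{N}\sum_{n=1}^N e(\beta n)$, which carries no information about the $a_i$'s. The condition to check is that $\frac{1}{N}\sum_{n=1}^N e\bigl(t_1[a_1(n)]+\dots+t_\ell[a_\ell(n)]\bigr)\to 0$ for all $t_1,\dots,t_\ell\in\text{Spec}(T)$ not all zero: the spectral frequencies multiply the iterates themselves, and there is no separate $\beta n$ term (your later paragraph implicitly works with the correct form, but a genuine extra $\beta n$ would require an equidistribution statement not covered by the hypothesis). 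Second, once the condition is stated correctly your ``main obstacle'' disappears: $e(-t\{x\})$ is a Riemann-integrable function of $x\bmod 1$, so its trigonometric approximation shifts each frequency only by an integer, $s_i=t_i+k_i$, not over a continuous range. Since $T$ is totally ergodic, every nonzero $t_i$ is irrational, hence so is $s_i=t_i+k_i$; discarding (or, under the hypothesis of Theorem~\ref{T:1.7}, simply tolerating) the coordinates with $t_i=0$, the resulting combination $\sum_i s_i a_i$ is non-trivial with at least one irrational coefficient, so the hypothesis plus Theorem~\ref{T:Bos} gives the required Weyl-sum decay. In particular no quantitative uniformity or interplay with the seminorm step is needed; the two conditions of Theorem~\ref{T:1.1_Nikos} are verified independently.
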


Following this newest approach of Frantzikinakis, and applying an extra twist in \cite[Lemma~6.2]{F2} (see Proposition~\ref{thm1}), we obtain the following improvement of the previous result.

\begin{customthm}{E}\label{T:TP}
For $\ell\in\mathbb{N},$ let $a_1,\ldots,a_\ell:(x_0,+\infty)\to\mathbb{R}$ be functions from a Hardy field $\mathcal{H}$ that satisfies \eqref{E:Hardy}. Suppose that
the $a_i$'s and their differences are in $\mathcal{T}+\mathcal{P}$ and every non-trivial linear combination
of them with irrational or zero coefficients, stays logarithmically away from rational polynomials.
 Then, the sequences $([a_1(n)])_n,\ldots,([a_\ell(n)])_n$ are jointly ergodic for every totally ergodic system.
\end{customthm}

%Notice that both Theorems~\ref{T:1.7} and ~\ref{T:TP}, because of \eqref{pwr}, imply total joint ergodicity. 
We prove Theorem~\ref{T:TP} in Section \ref{Sec:SCP}. In particular,
Theorem~\ref{T:TP} implies Theorem \ref{cor1}.

\begin{example}
To see why Theorem~\ref{T:TP} improves Theorem~\ref{T:1.7}, consider $p_1(n)=n^3+an^2+a^2n,$ and $p_2(n)=n^2+an,$ for $a\in\mathbb{R}\backslash\mathbb{Q}.$ Then, $p_1(n)-ap_2(n)=n^3\in \mathbb{Q}[n],$  so Theorem~\ref{T:1.7} does not provide any info on the joint ergodicity of the sequences $([p_1(n)])_n,$ $([p_2(n)])_n$. On the other hand, if $b_1p_1(n)+b_2p_2(n)\in \mathbb{Q}[n],$ for $b_1,b_2\in \mathbb{R}\backslash\mathbb{Q}_\ast,$ we get $b_1=b_2=0,$  hence Theorem~\ref{T:TP} implies that $([p_1(n)])_n,$ $([p_2(n)])_n$ are indeed jointly ergodic for every totally ergodic system.
\end{example}

\begin{remark}
Notice that if a function $a(x)$ stays logarithmically away from rational polynomials, then the same is true for the function $a(Wx+r)$ for all $W\in\mathbb{N},$ $r\in\mathbb{Z}.$  So, in Theorems~\ref{T:1.7} and ~\ref{T:TP} (thus in the previous example as well) we actually have total joint ergodicity for the sequences of interest.
\end{remark}

\subsection{Questions and Problems}
For Hardy field iterates coming from $\mathcal{T}+\mathcal{P},$ one can immediately ask the following question.   

\begin{question}
What is a characteristic property in Theorem~\ref{T:TP}  so that the sequences $([a_1(n)])_n,$ $\ldots, ([a_\ell(n)])_n$ are (totally) jointly ergodic for every totally ergodic system?
\end{question}

As we mentioned before, we expect the answer to this question to be very hard as it is not clear how to answer it even for $\ell=3,$ when we deal with total joint ergodicity and we restrict our study to $\mathcal{P}$ (see Problem~\ref{P:1} below).

 Restricting to $\mathcal{P},$ while our method uses  the assumption $p_1(0)=p_2(0)=0$ crucially, it is reasonable to ask the following.

\begin{question}\label{Q:2}
Can the assumption $p_1(0)=p_2(0)=0$ be dropped in Theorem~\ref{cor6}?
\end{question}

Since our approach to Theorem~\ref{cor6} cannot be extended to $\ell\geq 3$, one can state the following.

\begin{problem}\label{P:1}
Extend Theorem~\ref{cor6} for $\ell\geq 3.$
\end{problem}

While 
in general the notions of  ``ergodicity'' and ``total ergodicity'' are different, we didn't manage to find a pair of polynomial sequences that are jointly ergodic but not totally jointly ergodic. %(The following question can be stated for any $\ell\geq 3$.)

\begin{question}\label{P:2}
Does Theorem~\ref{cor6} characterize the notion of ``joint ergodicity for all totally ergodic systems'' for $([p_{1}(n)])_n, ([p_{2}(n)])_{n},$ where $p_{1},p_{2}\in\mathbb{R}[x]$ with $p_{1}(0)=p_{2}(0)=0$?
\end{question}

In case the answer to the previous question is negative, a natural, follow-up problem is the following (which can be stated for any $\ell\geq 3$ sequences as well).

\begin{problem}
Similarly to Theorem~\ref{cor6}, find a characterization to the ``joint ergodicity for all totally ergodic systems'' notion.
\end{problem}

\subsection{Strategy and organization of the paper}   
We provide some background material in Section~\ref{sec:2}.
In Section~\ref{Sec:SCP}, we prove all the results stated in Section~\ref{Sec:1} except Theorem~\ref{cor6}.
In fact, in Sections~\ref{sec:2} and ~\ref{Sec:SCP} we obtain stronger versions of these results for a fixed system (instead of all systems) and also deal with $W!$-joint ergodic property (defined in Section \ref{Sec:SCP} as well), and then derive the aforementioned results as corollaries. By applying an additional twist to the argument in \cite[Lemma~6.2]{F2}, we first prove Theorem~\ref{T:TP},  an enhancement of Frantizikinakis' result. This enables us to reduce Theorems~\ref{cor1}, \ref{cor2} and \ref{cor3}  to the case where the system is a rotation on an abelian group; we then use Weyl's equidistribution theorem to prove them.

%In Subsections~\ref{SSJE} and ~\ref{Ss:TB} we also study joint ergodicity properties for a fixed system (in the first one for expressions from $\mathcal{T}+\mathcal{P}$ and $\ell\in \mathbb{N}$ that we will use in the proof of Theorem~\ref{T:TP}, and polynomial ones for $\ell=2$  that will be used in the proof of Theorem~\ref{cor6} respectively). 

Sections \ref{sec:4}, \ref{Section:6} and \ref{sec:7} are devoted to the proof of Theorem \ref{cor6}. In Section \ref{sec:4}, we prove some formulas for the integral of exponential functions over subtori of $\mathbb{T}^{2}$ and $\mathbb{T}^{3}$. In Section~\ref{Section:6}, we first derive some equidistribution properties for polynomials, and then use the formulas obtained in  Section \ref{sec:4}  to study limits of exponential sums, as
\begin{equation}\nonumber
    \lim_{N\to\infty}\frac{1}{N}\sum_{n=1}^{N}e\Bigl(\sum_{i=1}^{2}t_{i}[p_{i}(n)]\Bigr).
\end{equation}
Using the tools developed in the previous sections, we prove Theorem~\ref{cor6} by splitting it into various cases in Section~\ref{sec:7}.

Finally, in Section \ref{app} (the appendix), we provide  two explicit examples to demonstrate the complexity of the behavior of joint ergodicity for $([p_{1}(n)])_n$, $([p_{2}(n)])_n$ with $p_{1},p_{2}\in\mathbb{R}[x]$. %In particular, we prove Proposition~\ref{apd} in that section.

\subsection*{Notation}
 With $\mathbb{N}=\{1,2,\ldots\},$ $\mathbb{Z},$ $\mathbb{Q},$ $\mathbb{R},$ and $\mathbb{C}$ we denote the sets of natural, integer, rational, real, and complex numbers respectively.  For $m\in \mathbb{N},$ $\mathbb{T}^m = (\mathbb{R}/\mathbb{Z})^m$ denotes the $m$ dimensional torus 
and $(a(n))_n$ denotes a sequence indexed over the natural numbers. For every $A\subseteq \mathbb{R},$ we write $A_\ast:=A\backslash\{0\}.$  
$[\cdot]$ (resp. $\{\cdot\}$) is the integer part (resp. fractional part) function, where, for all $x\in \mathbb{R},$ we have $x=[x]+\{x\}.$ Finally, $e(t):=e^{2\pi i t},$ $t\in \mathbb{R}.$

\section{An improvement of Frantzikinakis' result}\label{sec:2}

\subsection{Hardy field functions}\label{SS:HT}
%{\color{blue} We start by defining the class of Hardy field functions.}

Let $R$ be the collection of equivalence classes of real valued functions defined on some halfline $(x_0,\infty),$ $x_0\geq 0,$ where two functions that agree eventually are identified. These classes are called \emph{germs} of functions.  A \emph{Hardy field} is a subfield of the ring $(R, +, \cdot)$ that is closed under differentiation. Here, we use the word \emph{function} when we refer to elements of $R$ (understanding that all the operations defined and statements made for elements of $R$ are considered only for sufficiently large $x\in \mathbb{R}$).

 As it was mentioned in the introduction, following~\cite{F2}, we assume that all Hardy fields $\mathcal{H}$ considered satisfy the property:
\begin{equation*}
\text{If}\; a,b\in \mathcal{H}, \;\text{then}\; a\circ b^{-1}\in \mathcal{H}, \;\text{and}\;a(\cdot +h)\in \mathcal{H}\;\text{for all}\;h\geq 0.\footnote{ Such a Hardy field is the one of Pfaffian functions which contains all the logarithmico-exponential functions ($a$ is a \emph{logarithmico-exponential Hardy field function} if it belongs to a Hardy field of real valued functions and it is defined on some $(x_0,+\infty),$ $x_0\geq 0,$ by a finite combination of symbols $+, -, \times, \div, \sqrt[n]{\cdot}, \exp, \log$ acting on the real variable $x$ and on real constants). For more on Hardy field functions, see \cite{Bos, F3, F4, Hard}.}
\end{equation*}
  In particular, we will deal with Hardy field functions form $\mathcal{T} + \mathcal{P},$ where we recall that $\mathcal{T}$ denotes the class of tempered functions (i.e., functions $a$ that, for some $k\in\mathbb{N},$ satisfy $x^{k-1}\log x\prec a(x)\prec x^k$), and $\mathcal{P}$ is the class of real polynomials.

\subsection{Equidistribution on (sub)tori}

Let $Y$ be a finite-dimensional subtorus (i.e., a subgroup of a finite-dimensional torus). We say that a sequence $(x_n)_n\subseteq Y$ is \emph{equidistributed} on $Y$ if for every complex-valued Riemann integrable function $f$ on $Y$ we have 
\begin{equation*}\label{E:W}
\lim_{N\to\infty} \frac{1}{N} \sum_{n=1}^{N} f(x_n)=\int_Y f(x)\;dm_Y(x),\end{equation*} where $m_Y$ is the Haar measure on $Y.$

The following is a consequence of a special case of  \cite[Theorem 1.9]{GT} (see also \cite{Leib}).
   
\begin{proposition}\label{green}
Let $m,r\in\mathbb{N},$ $v_{i}=(v_{i,1},\dots,v_{i,m})\in\mathbb{Q}^{m}$ for $1\leq i\leq r,$ $H$ be the set of $(u_{1},\dots,u_{m})\in\mathbb{R}^{m}$ such that $u_{1}v_{i,1}+\dots+u_{m}v_{i,m}=0$ for all $1\leq i\leq r,$ $Y$ be the set of $(\{x_{1}\},\dots,\{x_{m}\})\in\mathbb{T}^{m}$ for some $(x_{1},\dots,x_{m})\in H,$ and $p=(p_{1},\dots,p_{m})\colon\mathbb{Z}\to\mathbb{R}^{m}$ be a vector valued polynomial taking values in $H$. If $(\{p(n)\})_{n}$ is not equidistributed on $Y,$ then there exist $k=(k_{1},\dots,k_{m})\in\mathbb{Z}^{m}$ which does not belong to $\emph{\text{span}}_{\mathbb{Q}}\{v_{1},\dots,v_{r}\}$\footnote{$\text{span}_{A}\{v_{1},\dots,v_{r}\}$ denotes all the linear combinations of $v_1,\ldots,v_r$ with coefficients from $A,$ $A\subseteq \mathbb{R}$.} and some $t\in\mathbb{R}$ such that $k_{1}p_{1}(n)+\dots+k_{m}p_{m}(n)\equiv t \mod 1$  for all $n\in\mathbb{Z}$. 
\end{proposition}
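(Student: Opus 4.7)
The plan is to reduce the statement to Weyl's criterion on $Y$ combined with Weyl's equidistribution theorem for real polynomials on $\mathbb{T}$; no deeper machinery should be required. First, I would pin down the annihilator of $Y$ in the character group $\mathbb{Z}^{m}$ of $\mathbb{T}^{m}$. Since $H$ is an $\mathbb{R}$-linear subspace of $\mathbb{R}^{m}$ cut out by rational linear equations, a character $y\mapsto e(k\cdot y)$ with $k\in\mathbb{Z}^{m}$ is trivial on $Y$ if and only if $k\cdot h\in\mathbb{Z}$ for every $h\in H$; by closure of $H$ under real scaling, this forces $k\cdot h=0$ on $H$, i.e., $k$ lies in the real span of $v_{1},\dots,v_{r}$. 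A standard check (the real span of rational vectors meets $\mathbb{Z}^{m}$ in its rational span) shows this is equivalent to $k\in\text{span}_{\mathbb{Q}}\{v_{1},\dots,v_{r}\}$, so the nontrivial characters of $Y$ are parameterized precisely by $k\in\mathbb{Z}^{m}\setminus\text{span}_{\mathbb{Q}}\{v_{1},\dots,v_{r}\}$.

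Next, I would invoke Weyl's criterion on $Y$: non-equidistribution of $(\{p(n)\})_{n}$ on $Y$ produces such a $k$ with
\[
\frac{1}{N}\sum_{n=1}^{N}e\bigl(k_{1}p_{1}(n)+\dots+k_{m}p_{m}(n)\bigr)\not\to 0.
\]
Writing $q(n):=k_{1}p_{1}(n)+\dots+k_{m}p_{m}(n)\in\mathbb{R}[n]$, I would then apply Weyl's equidistribution theorem for real polynomials on $\mathbb{T}$: the failure of the above Ces\`aro mean to vanish forces every non-constant coefficient of $q$ to be rational.

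Finally, I would clear denominators. Let $M\in\mathbb{N}$ be a common denominator of the non-constant coefficients of $q$, so that $Mq(n)-Mq(0)\in\mathbb{Z}$ for every $n\in\mathbb{Z}$. Then $k':=Mk\in\mathbb{Z}^{m}$ still lies outside $\text{span}_{\mathbb{Q}}\{v_{1},\dots,v_{r}\}$, since that $\mathbb{Q}$-subspace is stable under scaling by nonzero integers, and with $t:=Mq(0)\in\mathbb{R}$ we obtain $k'_{1}p_{1}(n)+\dots+k'_{m}p_{m}(n)\equiv t \mod 1$ for every $n\in\mathbb{Z}$, as required. The only step warranting real care is the character identification in the first paragraph; once the annihilator of $Y$ is correctly described, the remaining passages are immediate applications of Weyl's two classical theorems.
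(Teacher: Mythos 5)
Your proof is correct, but it takes a genuinely different route from the paper, which offers no argument of its own: there, Proposition~\ref{green} is obtained as a special case of the Green--Tao qualitative equidistribution theorem for polynomial orbits on nilmanifolds \cite{GT} (see also \cite{Leib}), specialized to the abelian nilmanifold $\mathbb{T}^{m}$ and the rational subtorus $Y$. You instead argue classically: you identify the annihilator of $Y$ in $\mathbb{Z}^{m}$ with $\mathbb{Z}^{m}\cap\text{span}_{\mathbb{Q}}\{v_{1},\dots,v_{r}\}$ (the scaling argument correctly forces $k\cdot h=0$ on $H$, and an integer vector in the real span of rational vectors does lie in their rational span), apply Weyl's criterion on $Y$ to extract $k\in\mathbb{Z}^{m}\setminus\text{span}_{\mathbb{Q}}\{v_{1},\dots,v_{r}\}$ with non-vanishing averages of $e(k\cdot p(n))$, invoke Weyl's polynomial theorem to conclude that all non-constant coefficients of $q=k\cdot p$ are rational, and clear denominators; replacing $k$ by $Mk$ keeps it outside the $\mathbb{Q}$-span and gives $Mq(n)\equiv Mq(0)\bmod 1$ for all $n\in\mathbb{Z}$, which is exactly the stated conclusion. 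What your approach buys is a self-contained, purely abelian proof that avoids the nilmanifold black box, very much in the spirit of the paper's stated methodology; the small price is the duality bookkeeping you flag yourself: one should record that $Y$ is a \emph{closed} subtorus (rationality of the $v_{i}$ makes $H+\mathbb{Z}^{m}$ closed), so that the Haar measure and Weyl's criterion on $Y$ make sense, and that every nontrivial character of $Y$ arises as the restriction of some $y\mapsto e(k\cdot y)$ from $\mathbb{T}^{m}$ (Pontryagin duality, or Stone--Weierstrass applied to the restricted characters, which separate points of $Y$); ``parameterized'' is slightly loose since distinct $k$'s can restrict to the same character, but nothing in your argument uses injectivity. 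These are standard facts, so there is no gap --- your write-up essentially reproves the abelian case of Leibman's theorem that the paper imports wholesale.
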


\begin{remark}\label{rem1}
\emph{(i)} If we impose the additional assumption $p(0)=(0,\dots,0)$ in Proposition~\ref{green}, then we have that $k_{1}p_{1}(n)+\dots+k_{m}p_{m}(n)\in\mathbb{Z}$ for all $n\in\mathbb{Z}$. 

\emph{(ii)} Proposition \ref{green} also extends to the case where $r=0$, $H=\mathbb{R}^{m}$ and $Y=\mathbb{T}^{m}$ ($\emph{\text{span}}_{\mathbb{Q}}\{v_{1},\dots,v_{r}\}$ is understood as  the singleton $\{(0,\dots,0)\}$). This is in fact Weyl's criterion (see \cite{W}, or \cite{Nid}):  For any sequence $(x_n)_n\subseteq \mathbb{R}^m$, $(\{x_n\})_n$\footnote{ If $x_n=(x_1(n),\ldots,x_m(n)),$ by $(\{x_n\})_n$ we mean $(\{x_1(n)\},\ldots,\{x_m(n)\})_n.$} is equidistributed on $\mathbb{T}^{m}$ if, and only if, for all $k\in \mathbb{Z}^m\setminus\{(0,\ldots,0)\},$ we have that $\frac{1}{N}\sum_{n=1}^N e^{2\pi i \langle k,x_n\rangle}\to 0,$ as $N\to\infty,$ where $\langle\cdot,\cdot\rangle$ denotes the standard inner product.
\end{remark}

\subsection{Frantizikinakis' criterion on joint ergodicity}\label{ss:Frantz}

As we mentioned in the introduction, the proof of Theorem~\ref{T:TP} (as the one of Theorem~\ref{T:1.7}) is primarily based on Frantzikinakis' main result from \cite{F2} (\cite[Theorem~1.1]{F2}--see Theorem~\ref{T:1.1_Nikos} below). In order to state it we need some additional definitions. 

Following \cite{HK99}, given a system $(X,\mathcal{B},\mu,T),$ we inductively define the \emph{Gowers-Host-Kra seminorms} $\nnorm{\cdot}_k$:\footnote{ $\nnorm{\cdot}_0,$ which is not a seminorm, is defined for convenience.} For $f\in L^\infty(\mu),$ we let
\[\nnorm{f}_0:=\int f\;d\mu,\;\text{and for}\;k\in\mathbb{N},\; \nnorm{f}_k^{2^k}:=\lim_{N\to\infty}\frac{1}{N}\sum_{n=1}^N \nnorm{\bar{f}\cdot T^n f}_{k-1}^{2^{k-1}}.\]

Next, we define the \emph{spectrum} of a transformation.

\begin{definition*}
For a system $(X,\mathcal{B},\mu,T)$ we let 
\[\text{Spec}(T):=\big\{t\in [0,1):\;Tf=e(t)f\;\text{for some nonzero}\;f\in L^2(\mu)\big\}.\]
\end{definition*}

\begin{remark}\label{R:te}
Notice that, for a totally ergodic transformation $T,$ $\text{Spec}(T)\subseteq [0,1)\setminus \mathbb{Q}_\ast,$ as $T$ cannot have an eigenvalue $\lambda\neq 1$ which is a root of unity.
\end{remark}

The following notions of ``good for seminorm estimates'' and ``good for equidistribution'' characterize joint ergodicity for ergodic systems.

\begin{definition*}
For a collection of sequences $a_1,\ldots,a_\ell:\mathbb{N}\to\mathbb{Z},$ and a system $(X,\mathcal{B},\mu,T),$ we say that $(a_1(n),\ldots,a_\ell(n))_n$ is
\begin{enumerate}
    \item[(i)] \emph{good for seminorm estimates} for  $(X,\mathcal{B},\mu,T)$, if there exists $s\in \mathbb{N}$ such that if $f_1,\ldots,f_\ell\in L^\infty(\mu)$ and $\nnorm{f_{i_0}}_s=0$ for some $1\leq i_0\leq \ell,$ then 
    \begin{equation*}
        \lim_{N\to\infty}\frac{1}{N}\sum_{n=1}^N T^{a_1(n)}f_1\cdot\ldots\cdot T^{a_{i_0}(n)}f_{i_0}=0,
    \end{equation*}
    where the convergence is in $L^2(\mu).$ It is \emph{good for seminorm estimates}, if it is good for seminorm estimates for every ergodic system.
    \item[(ii)] \emph{good for equidistribution} for $(X,\mathcal{B},\mu,T)$, if for all $t_1\ldots,t_\ell\in \text{Spec}(T),$ not all of them $0,$ we have 
    \begin{equation}\label{E:equid}
        \lim_{N\to\infty}\frac{1}{N}\sum_{n=1}^N e\big(t_1 a_1(n)+\ldots+t_\ell a_\ell(n)\big)=0.
    \end{equation}
    \emph{It is good for equidistribution}, if it is good for equidistribution for every system, or, equivalently, if \eqref{E:equid} holds for all $t_1,\ldots,t_\ell\in [0,1),$ not all of them $0.$
\end{enumerate}
\end{definition*}

\begin{theorem}[Theorem~1.1, \cite{F2}]\label{T:1.1_Nikos}
For $\ell\in \mathbb{N},$ let  $a_1,\ldots,a_\ell:\mathbb{N}\to\mathbb{Z}$ be sequences. $(a_1(n))_n,\ldots,(a_\ell(n))_n$ are jointly ergodic for an ergodic system $(X,\mathcal{B},\mu,T)$ if, and only if, $(a_1(n),\ldots,a_\ell(n))_n$ is good for seminorm estimates and equidistribution for $(X,\mathcal{B},\mu,T).$
\end{theorem}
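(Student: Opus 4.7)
\emph{Necessity (joint ergodicity implies both properties).} I would begin with the easier direction. For \emph{good for equidistribution}, let $t_1,\dots,t_\ell\in\text{Spec}(T)$, not all zero, and pick unit-modulus eigenfunctions $\phi_i\in L^2(\mu)$ with $T\phi_i=e(t_i)\phi_i$. Since $\prod_i T^{a_i(n)}\phi_i=e\bigl(\sum_i t_i a_i(n)\bigr)\prod_i\phi_i$, joint ergodicity forces
\[
\Bigl(\frac{1}{N}\sum_{n=1}^N e\bigl(\textstyle\sum_i t_i a_i(n)\bigr)\Bigr)\prod_i \phi_i \;\longrightarrow\; \prod_i\int\phi_i\,d\mu
\]
in $L^2(\mu)$. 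Ergodicity gives $\int\phi_i\,d\mu=0$ whenever $t_i\neq 0$, so the right-hand side vanishes; dividing pointwise by the unimodular factor $\prod\phi_i$ yields the required decay of the exponential sum. For \emph{good for seminorm estimates}, take $s=1$: by the mean ergodic theorem $\nnorm{f}_1=|\int f\,d\mu|$ in any ergodic system, so $\nnorm{f_{i_0}}_1=0$ forces $\int f_{i_0}\,d\mu=0$, and joint ergodicity delivers limit $=\prod_i\int f_i\,d\mu=0$.

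\emph{Sufficiency (both properties imply joint ergodicity).} Writing $f_i=c_i+g_i$ with $c_i:=\int f_i\,d\mu$ and $\int g_i\,d\mu=0$, expand
\[
\prod_{i=1}^\ell T^{a_i(n)}f_i \;=\; \sum_{S\subseteq\{1,\dots,\ell\}}\Bigl(\prod_{i\notin S}c_i\Bigr)\prod_{i\in S}T^{a_i(n)}g_i.
\]
The term $S=\emptyset$ is the desired limit, so it suffices to show that for each nonempty $S$, $\lim_N \frac{1}{N}\sum_{n=1}^{N}\prod_{i\in S}T^{a_i(n)}g_i=0$ in $L^2(\mu)$. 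Fixing such an $S$ and some $i_0\in S$, I would decompose $g_{i_0}=g_{i_0}^{\mathrm{str}}+g_{i_0}^{\mathrm{uni}}$ so that $\nnorm{g_{i_0}^{\mathrm{uni}}}_s=0$; applying the good-for-seminorm-estimates hypothesis to the padded $\ell$-tuple with $f_j=1$ for $j\notin S$, the uniform piece contributes nothing in the limit, and it remains to control the structured part $g_{i_0}^{\mathrm{str}}$.

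\emph{Main obstacle.} The difficulty is that seminorm estimates at level $s$ a priori localise the structured part only to the Host--Kra factor $\mathcal{Z}_{s-1}$, whereas the good-for-equidistribution hypothesis controls exponential sums only against the spectrum of $T$, i.e., against eigenfunctions of the Kronecker factor $\mathcal{Z}_1$. To bridge this gap without invoking the Host--Kra characteristic factors or equidistribution on nilmanifolds, my plan is an induction on $s$: a van der Corput step converts an $\ell$-term average with a level-$s$ obstruction into an average over a shift parameter $h$ of $\ell$-term averages involving the shifted iterates $a_i(n+h)-a_i(n)$ and carrying a level-$(s-1)$ obstruction; one must verify that both the seminorm-estimate and the equidistribution hypotheses survive passage to the derived tuples. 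Iterating drives $s$ down to $1$, at which level the structured piece lies in the $L^2$-closed span of eigenfunctions of $T$ and a direct application of good for equidistribution completes the argument. The subtle point, and where the argument must be engineered carefully, is to ensure that the good-for-equidistribution property is preserved under the van der Corput iteration.
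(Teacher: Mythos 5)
First, note that this paper does not prove Theorem~\ref{T:1.1_Nikos} at all: it is imported verbatim from \cite{F2}, so your proposal has to be measured against Frantzikinakis' argument there. Your necessity direction is fine and is essentially the standard one: the eigenfunction computation gives the equidistribution property (the division by the unimodular product of eigenfunctions is legitimate since ergodicity lets you take $|\phi_i|=1$ a.e.), and taking $s=1$, where $\nnorm{f}_1=|\int f\,d\mu|$ for ergodic systems, gives the seminorm property after padding with constant functions (you should take $i_0=\max S$-type care so that the truncated product in the definition matches the product you want to kill, but that is cosmetic).

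The sufficiency direction, however, has a genuine gap, and it sits exactly where you park it as a ``subtle point.'' Your plan is a PET/van der Corput induction in which the iterates $a_i(n)$ get replaced by the derived sequences $a_i(n+h)-a_i(n)$, and you would need the good-for-seminorm-estimates and good-for-equidistribution hypotheses to ``survive passage to the derived tuples.'' They do not, and there is no mechanism by which they could: in Theorem~\ref{T:1.1_Nikos} the sequences $a_1,\dots,a_\ell:\mathbb{N}\to\mathbb{Z}$ are completely arbitrary, the two hypotheses are assumptions about those specific sequences and that specific system, and nothing is assumed that would let you deduce either property for the differenced sequences (which can be wildly degenerate --- e.g.\ the differences can fail equidistribution against $\mathrm{Spec}(T)$ even when the original sequences satisfy it). PET-style induction works for polynomial or Hardy-field iterates precisely because their derived iterates stay inside a structured class; here there is no class. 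This is why the actual proof in \cite{F2} never differences the sequences: it runs a degree-lowering induction on the seminorm level $s$ controlling the \emph{averages}, twisting the functions by exponentials $e(t\,a_\ell(n))$ and using the spectral theorem and positivity arguments to show that control by $\nnorm{\cdot}_s$ can be upgraded to control by $\nnorm{\cdot}_{s-1}$, until only eigenfunctions matter and the good-for-equidistribution hypothesis closes the argument. (A minor additional slip: the structured part lying in the closed span of eigenfunctions corresponds to vanishing of $\nnorm{\cdot}_2$, not $\nnorm{\cdot}_1$; at level $1$ the structured component is just the constant.) So the base of your induction is in the right spirit, but the inductive step as proposed would fail, and with it the whole sufficiency half.
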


For the class of Hardy field functions from $\mathcal{T}+\mathcal{P},$ %(that we defined in Subsection~\ref{s112}),
we have the following result which follows from \cite[Proposition~6.5]{F2}.

\begin{proposition}[Proposition~6.5, \cite{F2}]\label{6.5_Nikos}
For $\ell\in \mathbb{N},$ let $a_1,\ldots, a_\ell: (x_0, \infty) \to \mathbb{R}$ be functions from a Hardy field that satisfies \eqref{E:Hardy} such that
the $a_i$'s and their pairwise differences are non-constant functions in $\mathcal{T}+\mathcal{P}$. Then $([a_1(n)], \ldots , [a_\ell(n)])_n$ is good for seminorm estimates.
\end{proposition}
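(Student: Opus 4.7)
The plan is to follow the blueprint of \cite[Proposition~6.5]{F2}, which establishes seminorm control precisely for this class of Hardy field iterates via a combination of van der Corput's inequality and PET induction adapted to $\mathcal{T}+\mathcal{P}$. The goal is to exhibit some $s\in\mathbb{N}$, depending only on the growth data of $a_1,\dots,a_\ell$, such that whenever $\nnorm{f_{i_0}}_s=0$ for some index $i_0$, the average $\frac{1}{N}\sum_{n=1}^{N}\prod_{i=1}^{\ell}T^{[a_i(n)]}f_i$ tends to $0$ in $L^2(\mu)$.

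First, I would apply van der Corput's inequality in $L^2(\mu)$ to bound the squared $L^2$-norm of the average by an $h$-average of correlation integrals involving the iterates $[a_i(n+h)]-[a_i(n)]$. The crucial observation is that this difference agrees with $[a_i(n+h)-a_i(n)]$ up to a bounded error $\epsilon_i(n,h)\in\{-1,0,1\}$ coming from the interaction of the fractional parts. The ``main'' term is then governed by the new iterates $b_i^{(h)}(n):=a_i(n+h)-a_i(n)$; by the closure properties of $\mathcal{T}+\mathcal{P}$ under discrete differentiation and the assumption \eqref{E:Hardy} on $\mathcal{H}$, these $b_i^{(h)}$ again lie in $\mathcal{T}+\mathcal{P}$, but with strictly lower growth than the $a_i$'s. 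The ``error'' contributions can be absorbed by redefining the relevant $f_i$, exploiting the fact that there are only finitely many possible values for the tuple $(\epsilon_1(n,h),\dots,\epsilon_\ell(n,h))$.

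Next, I would iterate this reduction via a PET-style induction on the complexity weight of the family $\{a_i\}\cup\{a_i-a_j:i\neq j\}$. Each van der Corput step strictly lowers the complexity, and after finitely many, say $s$, steps the family degenerates to a trivial one where the residual correlation average is dominated, through the defining recursion of the Gowers-Host-Kra seminorms, by $\nnorm{f_{i_0}}_{s}^{2^s}$. The hypothesis that every pairwise difference $a_i-a_j$ is non-constant and belongs to $\mathcal{T}+\mathcal{P}$ is what keeps the induction alive: it prevents accidental collisions (e.g.\ two iterates becoming identical, or one becoming constant) at intermediate stages, so that the PET scheme genuinely terminates and $s$ can be bounded explicitly in terms of $\ell$ and the maximal growth rate among the $a_i$'s.

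The main obstacle, in my view, is managing the error terms $\epsilon_i(n,h)$ produced by the integer parts and ensuring that their cumulative contribution across successive PET iterations does not swamp the main term. Frantzikinakis' remedy, which I would adopt, is a Fourier-theoretic smoothening: represent indicators of integer-part coincidences via oscillatory integrals against kernels with good decay, so the error contributions reduce to bounded oscillatory averages of exponentials $e(t\,a_i(n))$ that vanish uniformly by tempered-growth equidistribution. Once this is in place, the chain of van der Corput reductions yields the desired seminorm bound with $s$ determined purely by the asymptotic data of $a_1,\dots,a_\ell$, and in particular independent of the system $(X,\mathcal{B},\mu,T)$ and the functions $f_i$.
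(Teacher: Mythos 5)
This proposition is not proved in the paper at all: it is imported verbatim from \cite{F2} (Proposition~6.5 there) and used as a black box, so there is no internal argument to compare yours against; what you are sketching is a reconstruction of the PET-type machinery behind it (as in \cite{BMR}, \cite{DKS3}, \cite{Ts}). At the level of strategy your outline (van der Corput plus a PET induction adapted to $\mathcal{T}+\mathcal{P}$) is the expected one, but as written it has a genuine gap exactly at the technical heart of such results: the integer-part errors. The error $\epsilon_i(n,h)=[a_i(n+h)]-[a_i(n)]-[a_i(n+h)-a_i(n)]$ takes only finitely many values, but it depends on $n$ and $h$ through the fractional parts, so it cannot be ``absorbed by redefining the relevant $f_i$'': the term $T^{[a_i(n+h)-a_i(n)]+\epsilon_i(n,h)}f_i$ is $T^{[a_i(n+h)-a_i(n)]}$ applied to a function that changes with $n$. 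Splitting the $n$-average according to the value of the error tuple trades the problem for averages weighted by indicators of fractional-part events such as $\{a_i(n+h)\}<\{a_i(n)\}$, and obtaining seminorm control of those weighted averages is essentially the original difficulty again, so the induction does not simply continue.

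Your proposed Fourier remedy does not close this circle. Expanding such indicators in Fourier series produces twisted multiple averages of the form $\frac{1}{N}\sum_{n}e\big(k_1\{a_1(n)\}+\dots\big)\prod_i T^{\,\cdots}f_i$, not scalar Weyl sums, so nothing ``vanishes uniformly by tempered-growth equidistribution''; moreover, since the $a_i$ lie in $\mathcal{T}+\mathcal{P}$, linear combinations of them can be rational polynomials (this is precisely the regime Theorem~\ref{T:TP} of the paper is designed to handle), for which $\frac{1}{N}\sum_{n}e\big(t\,a_i(n)\big)$ need not tend to zero, and in any case seminorm estimates cannot be reduced to scalar equidistribution statements. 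A smaller inaccuracy: after a van der Corput step the relevant family is $\{a_i(\cdot+h)-a_j(\cdot)\}_{i,j}$, whose off-diagonal members have the same growth as the original functions, so ``strictly lower growth at each step'' is not the correct termination mechanism; termination of PET for $\mathcal{T}+\mathcal{P}$ requires the finer type/weight bookkeeping carried out in \cite{BMR}, \cite{DKS3}, \cite{Ts}. Unless you supply that bookkeeping and a genuine treatment of the $\epsilon_i(n,h)$ (for instance along the lines of the references above), the argument is incomplete; within this paper the appropriate step is the one the authors take, namely citing \cite[Proposition~6.5]{F2}.
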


\begin{remark}\label{R_proof}
To show our joint ergodicity results, by Theorem~\ref{T:1.1_Nikos}, via Proposition~\ref{6.5_Nikos}, it only suffices to verify the ``good for equidistribution'' property, which is a Weyl-type sum, hence, we only have to check equidistribution on (sub)tori.
\end{remark}

In particular, for Hardy field functions of at most polynomial growth,\footnote{ A function $a:(x_0,\infty)\to\mathbb{R}$ has \emph{at most polynomial growth} if there exists $k\in \mathbb{N}$ such that $a(x)\prec x^k.$}  hence for functions from $\mathcal{T}$ as well, we have the following result of Boshernitzan.

\begin{theorem}[\cite{Bos}]\label{T:Bos}
Let $a: (x_0, \infty) \to\mathbb{R}$ be a Hardy field function with
at most polynomial growth. $(a(n))_n$ is equidistributed on $\mathbb{T}$ if, and only
if, it stays logarithmically away from rational polynomials.
\end{theorem}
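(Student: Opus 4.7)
The plan is to treat the two directions separately, exploiting the rigid asymptotic behavior of Hardy field functions at infinity.

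\textbf{Necessity.} Suppose $(a(n))_n$ is equidistributed on $\mathbb{T}$ and, for contradiction, that $\log x \not\prec a(x)-p(x)$ for some $p\in\mathbb{Q}[x]$. Since $a-p$ lies in a Hardy field (enlarged if necessary to contain $\log x$), the quotient $(a(x)-p(x))/\log x$ admits a limit $c\in[-\infty,+\infty]$, and the assumption forces $c$ to be finite. Pick a positive integer $q$ such that $qp\in\mathbb{Z}[x]$; then $qp(n)\in\mathbb{Z}$ and hence $qa(n)\equiv q(a(n)-p(n))\pmod{1}$. The Hardy field function $f:=q(a-p)$ satisfies $|f(x)|=O(\log x)$, and a case analysis on its asymptotic type shows in each instance that $(f(n))_n$ is not equidistributed mod $1$: bounded Hardy field functions converge, asymptotically linear-in-$\log$ functions $qc\log n$ satisfy $\frac{1}{N}\sum_{n=1}^N e(qc\log n)\to (1+2\pi i qc)^{-1}\neq 0$, and Hardy field functions growing to $\infty$ strictly slower than $\log x$ admit similar non-vanishing integral approximations. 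Weyl's criterion (Remark~\ref{rem1}(ii)) in the mode $k=q$ thus fails for $(a(n))_n$, a contradiction.

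\textbf{Sufficiency.} Assume $a$ stays logarithmically away from every rational polynomial. By Weyl's criterion it suffices to show
\[
\frac{1}{N}\sum_{n=1}^{N}e(ka(n))\longrightarrow 0 \quad\text{for every } k\in\mathbb{Z}_\ast.
\]
Since $ka-p'=k(a-p'/k)$ with $p'/k\in\mathbb{Q}[x]$, the function $ka$ still satisfies the log-away hypothesis, so it is enough to treat $k=1$. I would then apply van der Corput's $A$-process to the sum $\sum_{n=1}^{N}e(a(n))$, reducing it to averaged bounds for sums of the form $\sum e(a(n+h)-a(n))$ with small positive integer shifts $h$. Since $a(x+h)-a(x)=ha'(x)+O(a''(x))$ and $a'$ is itself a Hardy field function of strictly smaller growth, iterating the reduction descends onto either an exponential integral controllable by the first- and second-derivative tests of van der Corput (the $B$-process), or a sum directly estimable via the Kusmin--Landau inequality. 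The log-away hypothesis is precisely what ensures the iterated differences never collapse into the degenerate regime where no cancellation is available.

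\textbf{Main obstacle.} The crux is converting the qualitative log-away assumption on $a$ itself into a quantitative lower bound of the form $|a^{(j)}(x)|\gg (\log x)/x^{j}$ for some $j\in\mathbb{N}$ and all sufficiently large $x$. The strategy is contrapositive: if every iterated derivative failed this lower bound, then successive Taylor integration inside the Hardy field would reconstruct an approximating $p\in\mathbb{Q}[x]$ with $a(x)-p(x)\preceq \log x$, contradicting the hypothesis. Rigorously carrying out this extraction within the Hardy field calculus, and in particular handling the delicate borderline regime where $a(x)$ is asymptotic to $c\log x+p(x)$ with $c$ irrational, concentrates essentially all the technical weight of the proof.
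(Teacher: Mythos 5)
There is no proof of this statement in the paper to compare against: Theorem~\ref{T:Bos} is quoted as a black box from Boshernitzan's paper \cite{Bos}, so your proposal has to stand on its own as a proof of that external result -- and as it stands it does not. The necessity half is essentially sound and can be made rigorous (one small slip: $\frac{1}{N}\sum_{n\le N}e(qc\log n)$ does not converge to $(1+2\pi iqc)^{-1}$, only its modulus converges to $|1+2\pi iqc|^{-1}$, which is what you actually need; also mixed cases such as $f(x)=qc\log x+\sqrt{\log x}$ are not covered by your three cases verbatim and require the Hardy-field L'H\^{o}pital-type fact that $xf'(x)\to qc$, after which a single integration-by-parts argument handles all of them uniformly).

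The sufficiency half, however, is only a plan, and it has a concrete flaw in addition to the obstacle you yourself flag as unresolved. Van der Corput's $A$-process applied directly to $a$ does not preserve the hypothesis: take $a(x)=x^{2}+\sqrt{x}$, which stays logarithmically away from every rational polynomial (and is indeed equidistributed, since $e(a(n))=e(\sqrt{n})$ for integer $n$), yet $a(n+h)-a(n)=2hn+h^{2}+O(n^{-1/2})$, so $\frac{1}{N}\sum_{n\le N}e\bigl(a(n+h)-a(n)\bigr)\to 1$ for every fixed $h$, and the $A$-process inequality gives no cancellation at all; Kusmin--Landau and the $B$-process are likewise useless here because the relevant derivatives are dominated by the integer-polynomial part. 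The point of the theorem being about \emph{rational} polynomials is exactly that one must first strip off the rational/integer polynomial component of $a$ (exploiting $e(q(n))$ being constant along suitable arithmetic progressions) before any differencing, and then run an induction on growth in which the log-away property of the reduced function must be shown to persist; at the bottom of the induction ($\log x\prec f\prec x$) Boshernitzan concludes via Fej\'er's theorem using the Hardy-field fact that $f'$ is eventually monotone with $xf'(x)\to\infty$. This reduction-and-persistence argument, which you defer to the ``main obstacle'' paragraph, is precisely the substance of the theorem, so the proposal is a sketch with the core missing rather than a proof.
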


\subsection{The improved condition on joint ergodicity}\label{SSJE}
Using Theorem \ref{T:Bos}, Theorem~\ref{T:TP} will follow from Proposition~\ref{thm1_TP} (see below). The latter provides a sufficient condition of (total) joint ergodicity for a specific system (its proof follows the philosophy of \cite[Lemma~6.2]{F2}). While our assumption is weaker than that of \cite[Lemma~6.2]{F2} (we restrict the coefficients of the linear combinations to the spectrum of the transformation $T$ and not the whole set of irrational numbers), our conclusion is stronger.

It is more convenient for us, instead of dealing with $\text{Spec}(T)$, to work with the set
\[S(T):=\text{Spec}(T)+\mathbb{Z}.\]
Notice that, because of Remark~\ref{R:te}, $S(T)\subseteq (\mathbb{R}\setminus \mathbb{Q})\cup \mathbb{Z}.$

\begin{proposition}\label{thm1_TP}
Let $(X,\mathcal{B},\mu,T)$ be a totally ergodic system and $a_1,\ldots,a_\ell:(x_0,+\infty)\to\mathbb{R}$ be functions from a Hardy field that satisfies \eqref{E:Hardy}. Suppose that
the $a_i$'s and their differences are in $\mathcal{T}+\mathcal{P}$ and that every non-trivial linear combination
of them, with coefficients from $S(T)\backslash\mathbb{Z}_\ast$, stays logarithmically away from rational polynomials. Then, the sequences $([a_1(n)])_n,\ldots, ([a_\ell(n)])_n$ are totally jointly ergodic for $(X,\mathcal{B},\mu,T).$
\end{proposition}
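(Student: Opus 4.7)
The plan is to apply Frantzikinakis' criterion (Theorem~\ref{T:1.1_Nikos}) together with the seminorm bounds of Proposition~\ref{6.5_Nikos}. Both $\mathcal{T}+\mathcal{P}$ and the condition of staying logarithmically away from rational polynomials are preserved under the substitution $a_i(x)\mapsto a_i(Wx+r)$, so total joint ergodicity reduces to ordinary joint ergodicity for each choice of $W,r$; for this it suffices to verify the good-for-equidistribution property for $(X,\mathcal{B},\mu,T)$. Concretely, for each $(t_1,\ldots,t_\ell)\in\text{Spec}(T)^\ell$ not all zero I must show
\[
\lim_{N\to\infty}\frac{1}{N}\sum_{n=1}^N e\Bigl(\sum_{i=1}^\ell t_i[a_i(n)]\Bigr)=0.
\]
Since $T$ is totally ergodic, each $t_i\in[0,1)\setminus\mathbb{Q}_\ast$ is $0$ or irrational (Remark~\ref{R:te}), and dropping the zero indices I assume every $t_i$ is irrational.

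The device is to lift the character sum to $\mathbb{T}^{\ell+1}$. Put $b(n):=\sum_{i=1}^\ell t_i a_i(n)$, $v(n):=(\{a_1(n)\},\ldots,\{a_\ell(n)\},\{b(n)\})\in\mathbb{T}^{\ell+1}$, and define $G(x_1,\ldots,x_\ell,y):=e(y-\sum_{i=1}^\ell t_i x_i)$ on the fundamental domain $[0,1)^{\ell+1}$. A direct computation gives $G(v(n))=e(\sum t_i[a_i(n)])$, and $G$ is bounded and Riemann integrable on $\mathbb{T}^{\ell+1}$ (its discontinuities lie on a Haar-null union of coordinate hyperplanes). The target is to prove $\int G\,d\nu=0$ for every weak-$\ast$ accumulation point $\nu$ of the empirical measures $\mu_N:=\frac{1}{N}\sum_{n=1}^N\delta_{v(n)}$, which together with the uniform bound on the averages yields the desired limit.

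The main computation is at characters $(k,k')\in\mathbb{Z}^{\ell+1}$ with $k'\neq 0$:
\[
\widehat{\mu_N}(k,k')=\frac{1}{N}\sum_{n=1}^N e\Bigl(\sum_{i=1}^\ell(k_i+k't_i)a_i(n)\Bigr).
\]
Each coefficient $c_i:=k_i+k't_i$ is irrational and nonzero (as $t_i$ is irrational and $k'\neq 0$), and since $\text{Spec}(T)$ is a subgroup of $\mathbb{R}/\mathbb{Z}$, $c_i\in S(T)\setminus\mathbb{Z}_\ast$. The proposition's hypothesis then guarantees that $\sum c_i a_i\in\mathcal{T}+\mathcal{P}$ stays logarithmically away from $\mathbb{Q}[x]$, and Boshernitzan's Theorem~\ref{T:Bos} furnishes equidistribution mod $1$; hence $\widehat{\mu_N}(k,k')\to 0$. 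Consequently every accumulation point satisfies $\widehat\nu(k,k')=0$ for $k'\neq 0$, i.e., $\nu=\nu_0\otimes m_\mathbb{T}$ for some probability measure $\nu_0$ on $\mathbb{T}^\ell$. Fubini then yields
\[
\int G\,d\nu=\int_{\mathbb{T}^\ell}e\Bigl(-\sum_{i=1}^\ell t_i x_i\Bigr)d\nu_0(x)\cdot\int_\mathbb{T}e(y)\,dy=0.
\]

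The main obstacle, and the content of the ``extra twist'' over \cite[Lemma~6.2]{F2}, is recognizing that the twisted coefficients $c_i=k_i+k't_i$ arising from Weyl's criterion lie in $S(T)$ thanks to $t_i\in\text{Spec}(T)$ and the additive-group structure of $S(T)$, so that the proposition's hypothesis (restricted to coefficients in $S(T)\setminus\mathbb{Z}_\ast$) is sufficient to invoke Boshernitzan's theorem. A secondary technicality is justifying $\int G\,d\mu_N\to\int G\,d\nu$ despite $G$ being only Riemann integrable; this rests on $\nu$ assigning measure zero to the discontinuity set of $G$, which follows from the product structure $\nu|_y=m_\mathbb{T}$ (handling the $\{y=0\}$ piece) together with the observation that $G$ extends continuously along each $\{x_i=0\}$ hyperplane.
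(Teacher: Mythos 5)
Your overall route is the same as the paper's: reduce, via Theorem~\ref{T:1.1_Nikos} and Proposition~\ref{6.5_Nikos}, to showing $\frac1N\sum_{n=1}^N e\big(\sum_i t_i[a_i(n)]\big)\to 0$ for $t_1,\dots,t_\ell\in S(T)$ not all integers, and then make the sum accessible by twisting the coefficients so that they stay in $S(T)\setminus\mathbb{Z}_\ast$, where the hypothesis together with Boshernitzan's Theorem~\ref{T:Bos} and Weyl's criterion applies. Two remarks on the mechanics: the paper twists only by integers (coefficients $t_i+k_i$), which needs nothing beyond $S(T)+\mathbb{Z}=S(T)$, whereas your coefficients $k_i+k't_i$ also use that $\text{Spec}(T)$ is a group (closed under integer multiples); this is a standard fact for ergodic systems, but it is an extra ingredient you assert without proof.

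The genuine problem is your final limiting step, $\int G\,d\mu_{N_j}\to\int G\,d\nu$. The discontinuity set of $G(x,y)=e\big(\{y\}-\sum_i t_i\{x_i\}\big)$ on $\mathbb{T}^{\ell+1}$ is $\bigcup_i\{x_i=0\}$, across which $G$ jumps by the factor $e(t_i)\neq 1$; there is no issue at $\{y=0\}$ at all, since $y\mapsto e(\{y\})=e(y)$ is a character. So your justification is exactly backwards: the product structure $\nu=\nu_0\otimes m_{\mathbb{T}}$ controls only the $y$-marginal and gives no information on $\nu_0(\{x_i=0\})$, and under the stated hypotheses this mass can equal $1$. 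For instance $a_1(x)=x^2$, $a_2(x)=\sqrt2\,x$, with $T$ the rotation by $\sqrt3$ (so $S(T)=\mathbb{Z}+\sqrt3\,\mathbb{Z}$) satisfies every hypothesis of the proposition, yet $\{a_1(n)\}\equiv 0$, so every accumulation point $\nu$ is carried by the hyperplane where $G$ is discontinuous and the portmanteau argument does not apply. Worse, the soft information you actually establish (vanishing of $\widehat{\mu_N}(k,k')$ for $k'\neq 0$, equivalently product-form accumulation points) cannot by itself yield the conclusion for a Riemann-integrable integrand: in the same example, take $t_1=t_2=\sqrt3$ and $G'(x_1,x_2,y)=\mathbf{1}_{x_1=0}\,h(x_2)\,e(y)$ with $h$ bounded and chosen so that $h(\{\sqrt2\,n\})=e\big(-t_1n^2-t_2\sqrt2\,n\big)$; then $G'$ is Riemann integrable (its discontinuities lie in the null set $\{x_1=0\}$), every accumulation point $\nu$ gives $\int G'\,d\nu=0$, and yet $\int G'\,d\mu_N=1$ for all $N$. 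Hence closing the gap requires using the specific form of $G$ and of $\mathcal{T}+\mathcal{P}$ — within this class, coordinates whose fractional parts fail to equidistribute come from polynomials in $\mathbb{Q}[x]+\mathbb{R}$, and these must be treated separately (their integer parts are integer polynomials plus constants along arithmetic progressions, after which the hypothesis, crucially allowing zero coefficients, still gives cancellation). To be fair, the paper's own proof of its corresponding claim (two-sided approximation of a Riemann integrable $G$ by trigonometric polynomials) is itself terse at this very point, but your stated justification is incorrect as written, and the situation it fails to cover is precisely the new degenerate case that the weakened hypothesis of this proposition is designed to admit.
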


To show Proposition~\ref{thm1_TP}, we follow the arguments of \cite[Lemma 6.2]{F2}. In our proof, we delete the $t_{i}$'s that are equal to $0$ at the beginning, and then continue working under the assumption that all the $t_{i}$'s are nonzero (in which case the claim holds and leads to the required strengthening of \cite[Proposition~6.4]{F2} and Theorem~\ref{T:1.7}).

\begin{proof}[Proof of Proposition~\ref{thm1_TP}]
We first claim that 
for any $s\leq \ell$, $t_{1},\dots,t_{s}\in S(T)\backslash\mathbb{Z},$ and for any Riemann integrable function $G\colon \mathbb{T}^{s}\to \mathbb{C}$, we have that
\begin{equation}\label{3515}
    \lim_{N\to\infty}\frac{1}{N}\sum_{n=1}^{N}e\Big(\sum_{i=1}^{s}t_i a_{i}(n)\Big)G(a_{1}(n),\dots,a_{s}(n))=0.
\end{equation}
To see this, approximating $G$ from below and above by continuous functions, and then by trigonometric polynomials, we may assume without loss of generality that $G(x_{1},\dots,x_{s})=e(k_{1}x_{1}+\dots+k_{s}x_{s}),$ $x_{1},\dots,x_{s}\in\mathbb{T}$ for some $k_{1},\dots,k_{s}\in\mathbb{Z}$.

The left-hand side of (\ref{3515}) equals to
\begin{equation}\label{3516}
    \lim_{N\to\infty}\frac{1}{N}\sum_{n=1}^{N}e\Big(\sum_{i=1}^{s}(t_{i}+k_{i})a_{i}(n)\Big).
\end{equation}
Clearly, each $t_{i}+k_{i}$ belongs to $S(T)\backslash\mathbb{Z}$. Since every non-trivial linear combination
of $a_1,\ldots,a_\ell,$ with coefficients from $S(T)\backslash\mathbb{Z}_\ast$, stays logarithmically away from rational polynomials, we have from Theorem~\ref{T:Bos} that (\ref{3516}) equals to $0$ by Weyl's equidistribution criterion.

To prove the statement, by Remark~\ref{R_proof}, 
it suffices to show that for all $t_{1},\dots,t_{\ell}\in S(T)\backslash\mathbb{Z}_{\ast}$ not all equal to 0, we have that 
\begin{equation*}\label{3517}
    \lim_{N\to\infty}\frac{1}{N}\sum_{n=1}^{N}e\Big(\sum_{i=1}^{\ell}t_{i}[a_{i}(n)]\Big)=0.
\end{equation*}
We may assume without loss of generality that $t_{1},\dots,t_{m}\neq 0$ and $t_{m+1}=\dots=t_{\ell}=0$ for some $1\leq m\leq \ell$.  Then, applying the claim for $s=m$ and $G(x_{1},\dots,x_{m}):=e(-\{x_{1}\}t_{1}-\ldots-\{x_{m}\}t_{m}),$ $x_{1},\dots,x_{m}\in\mathbb{T}$, we have that 
\begin{eqnarray*}
    \lim_{N\to\infty}\frac{1}{N}\sum_{n=1}^{N}e\Big(\sum_{i=1}^{\ell}t_{i}[a_{i}(n)]\Big) & = & \lim_{N\to\infty}\frac{1}{N}\sum_{n=1}^{N}e\Big(\sum_{i=1}^{m}t_{i}[a_{i}(n)]\Big) \\
    & = & \lim_{N\to\infty}\frac{1}{N}\sum_{n=1}^{N} e\Big(\sum_{i=1}^{m}t_{i}a_{i}(n)\Big)e\Big(-\sum_{i=1}^{m}t_{i}\{a_{i}(n)\}\Big) \\
    & = & \lim_{N\to\infty}\frac{1}{N}\sum_{n=1}^{N}e\Big(\sum_{i=1}^m t_i a_i(n)\Big)G(a_1(n),\ldots,a_m(n))=0,
\end{eqnarray*}
as was to be shown.
\end{proof}

\section{Total joint ergodicity for special classes of polynomials}\label{Sec:SCP}

In this section, we prove Theorems~\ref{T:TP}, \ref{cor1}, 
 \ref{cor2}, and \ref{cor3}.
We first provide in Subsection~\ref{ss:tjefs} variations of the last three theorems for a fixed system (as we did in Proposition~\ref{thm1_TP} for Theorem~\ref{T:TP}), and then derive the desired results in Subsection~\ref{sss:32}.

\subsection{Total joint ergodicity for a fixed  totally ergodic system}\label{ss:tjefs}

 We start with an implication of Proposition~\ref{thm1_TP}. Restricting to $\mathcal{P},$ we get the following.
 
\begin{proposition}\label{thm1}
Let $\ell\in \mathbb{N},$ $(X,\mathcal{B},\mu,T)$ be a totally ergodic system, and $p_{1},\dots,p_{\ell}\in\mathbb{R}[x]$. 
If $p_{1},\dots,p_{\ell}$ are $S(T)\backslash\mathbb{Z}_{\ast}$-independent,
then $([p_{1}(n)])_n,\dots,([p_{\ell}(n)])_n$ are totally jointly ergodic  for $(X,\mathcal{B},\mu,T)$. 
\end{proposition}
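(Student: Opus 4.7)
The plan is to deduce Proposition~\ref{thm1} as an immediate specialization of Proposition~\ref{thm1_TP} to the case $a_i=p_i$, $1\leq i\leq\ell$. The Hardy-field hypotheses required by Proposition~\ref{thm1_TP} are automatic in the polynomial setting: each $p_i$ lies in $\mathcal{P}\subseteq\mathcal{T}+\mathcal{P}$, differences of polynomials are again polynomials and hence also in $\mathcal{T}+\mathcal{P}$, and real polynomials belong to any reasonable Hardy field satisfying \eqref{E:Hardy} (e.g.\ the Hardy field of Pfaffian functions mentioned in the footnote of Section~\ref{sec:2}). So the only substantive thing to check is the ``logarithmically away from rational polynomials'' hypothesis.

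To that end, I would fix coefficients $c_1,\dots,c_\ell\in S(T)\setminus\mathbb{Z}_\ast$ not all zero, set $q:=c_1p_1+\dots+c_\ell p_\ell$, and argue that $q(x)-p(x)\succ\log x$ for every $p\in\mathbb{Q}[x]$. Since $(X,\mathcal{B},\mu,T)$ is totally ergodic, Remark~\ref{R:te} gives $S(T)\subseteq(\mathbb{R}\setminus\mathbb{Q})\cup\mathbb{Z}$, so each nonzero $c_i$ is irrational. The $S(T)\setminus\mathbb{Z}_\ast$-independence of $p_1,\dots,p_\ell$ then forbids $q$ from lying in $\mathbb{Q}[x]+\mathbb{R}$, for otherwise all $c_i$'s would vanish. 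Consequently, $q$ has some irrational coefficient at a degree $k\geq 1$.

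For any $p\in\mathbb{Q}[x]$, the coefficient of $x^k$ in $q-p$ is an irrational minus a rational, hence irrational and in particular nonzero; therefore $\deg(q-p)\geq k\geq 1$, which yields $|q(x)-p(x)|\succeq x\succ\log x$. This verifies the ``logarithmically away'' condition for every non-trivial linear combination with coefficients in $S(T)\setminus\mathbb{Z}_\ast$, and Proposition~\ref{thm1_TP} then delivers the desired total joint ergodicity of $([p_1(n)])_n,\dots,([p_\ell(n)])_n$ for $(X,\mathcal{B},\mu,T)$.

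There is no real obstacle here beyond unpacking the definition of $V$-independence and comparing polynomial growth to $\log x$; the analytic and ergodic-theoretic heart of the argument has already been absorbed into Proposition~\ref{thm1_TP}, which in turn rests on Frantzikinakis' criterion (Theorem~\ref{T:1.1_Nikos}), the seminorm-estimate input from Proposition~\ref{6.5_Nikos}, and Boshernitzan's equidistribution theorem (Theorem~\ref{T:Bos}).
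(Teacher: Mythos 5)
Your proposal is correct and matches the paper's own route: the paper obtains Proposition~\ref{thm1} precisely by restricting Proposition~\ref{thm1_TP} to $\mathcal{P}$, using (as you spell out via the irrational-coefficient argument) that for polynomials ``stays logarithmically away from rational polynomials'' is equivalent to not lying in $\mathbb{Q}[x]+\mathbb{R}$, which the $S(T)\backslash\mathbb{Z}_{\ast}$-independence guarantees for every non-trivial combination. Your write-up just makes explicit the verification the paper leaves implicit, so there is nothing to add.
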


For $W\in \mathbb{N},$ we now define the notion of $W$-joint ergodicity.

\begin{definition*}
For $\ell\in\mathbb{N},$ a collection of sequences $a_1,\ldots,a_\ell:\mathbb{N}\to \mathbb{Z},$ and $W\in \mathbb{N},$ we say that $(a_1(n))_n,\ldots,$ $(a_\ell(n))_n$ are \emph{$W$-jointly ergodic} for the system $(X,\mathcal{B},\mu,T),$ if for all $f_{1},\dots,f_{\ell}\in L^{\infty}(\mu)$, we have 
\[\lim_{N\to\infty}\frac{1}{N}\sum_{n=1}^{N}T^{a_{1}(Wn)}f_{1}\cdot\ldots\cdot T^{a_{\ell}(Wn)}f_{\ell}=\int_{X}f_{1}\,d\mu\cdot\ldots\cdot \int_{X}f_{\ell}\,d\mu,\]
where the convergence takes place in $L^2(\mu).$ For $\ell=1,$ we say that $(a_1(n))_n$ is \emph{$W$-ergodic}.
\end{definition*} 
 
\begin{proposition}\label{thm2}
Let $\ell\in\mathbb{N},$ $(X,\mathcal{B},\mu,T)$ be a totally ergodic system, and $p_{1},\dots,p_{\ell}\in\mathbb{R}[x]$. 
If there exists a nonempty subset $\{i_{1},\dots,i_{k}\}\subseteq\{1,\dots,\ell\}$ such that $p_{i_{1}},\ldots,p_{i_{k}}$ are $S(T)\backslash\mathbb{Z}_{\ast}$-dependent and all the irrational polynomials in $p_{i_{1}},\dots,p_{i_{k}},$ if any, are $\mathbb{Q}$-independent,
then there exists $W_{0}\equiv W_0(p_1,\ldots,p_\ell) \in\mathbb{N}$ such that for all $W\geq W_{0}$, $([p_{1}(n)])_n,\dots,$ $([p_{\ell}(n)])_n$ are not $W!$-jointly ergodic for $(X,\mathcal{B},\mu,T)$. 
\end{proposition}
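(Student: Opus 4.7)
The plan is to produce, for each sufficiently large $W$, bounded functions $f_1,\dots,f_\ell$ witnessing the failure of $W!$-joint ergodicity; the construction exploits eigenfunctions of $T$ to encode the hypothesized dependence into a phase factor. Fix a nontrivial relation $\sum_{j=1}^k c_{i_j}p_{i_j}=q+r$ with $c_{i_j}\in S(T)\setminus\mathbb{Z}_\ast$ not all zero, $q\in\mathbb{Q}[x]$, and $r\in\mathbb{R}$, and set $S:=\{j:c_{i_j}\neq 0\}$. Since $T$ is totally ergodic, Remark~\ref{R:te} yields $S(T)\subseteq(\mathbb{R}\setminus\mathbb{Q})\cup\mathbb{Z}$, so each $c_{i_j}$ with $j\in S$ is irrational; write $c_{i_j}=t_{i_j}+m_{i_j}$ with $t_{i_j}\in\text{Spec}(T)$ (nonzero, hence irrational) and $m_{i_j}\in\mathbb{Z}$. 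Choose a modulus-one eigenfunction $g_j\in L^2(\mu)$ with $Tg_j=e(t_{i_j})g_j$, set $f_{i_j}:=g_j$ for $j\in S$, and $f_i:=\mathbf{1}$ for every other index $i$. Then $\prod_i\int f_i\,d\mu=0$, so $W!$-joint ergodicity would require the $L^2$ limit of $\tfrac{1}{N}\sum_{n=1}^N\prod_i T^{[p_i(W!n)]}f_i$ to vanish.

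The eigenfunction action produces
\[\prod_i T^{[p_i(W!n)]}f_i=\Bigl(\prod_{j\in S}g_j\Bigr)\cdot e\Bigl(\sum_{j\in S}t_{i_j}[p_{i_j}(W!n)]\Bigr).\]
Combining $t_{i_j}\equiv c_{i_j}\pmod 1$, $[p_{i_j}(W!n)]\in\mathbb{Z}$, the identity $[x]=x-\{x\}$, and the dependence relation, the phase reduces modulo $1$ to $q(W!n)+r-\sum_{j\in S}c_{i_j}\{p_{i_j}(W!n)\}$. I choose $W_0$ so that $W_0!$ clears the denominators of $q$ and of the polynomial parts of every $p_{i_j}\in\mathbb{Q}[x]+\mathbb{R}$ with $j\in S$; then for all $W\geq W_0$, $q(W!n)\in\mathbb{Z}$ for every $n$. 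Since $\prod_{j\in S}g_j$ has unit modulus, it suffices to show that
\[L(W):=\lim_{N\to\infty}\tfrac{1}{N}\sum_{n=1}^N e\Bigl(-\sum_{j\in S}c_{i_j}\{p_{i_j}(W!n)\}\Bigr)\]
is nonzero for $W\geq W_0$.

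To evaluate $L(W)$, partition $S=S_{\mathrm{irr}}\sqcup S_{\mathrm{rat}}$ according as $p_{i_j}\notin\mathbb{Q}[x]+\mathbb{R}$ or $p_{i_j}\in\mathbb{Q}[x]+\mathbb{R}$. For $j\in S_{\mathrm{rat}}$, writing $p_{i_j}=q_j+r_j$ with $q_j\in\mathbb{Q}[x]$ and $r_j\in\mathbb{R}$, the choice of $W_0$ makes $\{p_{i_j}(W!n)\}=\{r_j\}$ constant in $n$. For $j\in S_{\mathrm{irr}}$, the substitution $x\mapsto W!x$ preserves membership in (and out of) $\mathbb{Q}[x]+\mathbb{R}$, so the $\mathbb{Q}$-independence hypothesis forces any nontrivial $\mathbb{Z}$-linear combination of $p_{i_j}(W!x)$, $j\in S_{\mathrm{irr}}$, to be an irrational real polynomial. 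Weyl's equidistribution theorem for polynomial sequences together with Weyl's criterion (Remark~\ref{rem1}(ii)) then imply that $(\{p_{i_j}(W!n)\})_{j\in S_{\mathrm{irr}}}$ is equidistributed on $\mathbb{T}^{|S_{\mathrm{irr}}|}$. Consequently
\[L(W)=e\Bigl(-\!\!\sum_{j\in S_{\mathrm{rat}}}c_{i_j}\{r_j\}\Bigr)\cdot\prod_{j\in S_{\mathrm{irr}}}\int_0^1 e(-c_{i_j}y)\,dy,\]
and every factor is nonzero, since each $c_{i_j}$ is irrational and hence $\int_0^1 e(-c_{i_j}y)\,dy=(1-e(-c_{i_j}))/(2\pi i c_{i_j})\neq 0$.

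The main obstacle is the modular reduction in the middle step: calibrating $W_0$ so that both $q(W!n)$ (arising from the dependence) and each $q_j(W!n)$ for $j\in S_{\mathrm{rat}}$ are integers simultaneously for every $W\geq W_0$, and then tracking the phase identity cleanly enough that it survives averaging. Once this bookkeeping is in place, the final step is a direct application of Weyl's criterion enabled by the $\mathbb{Q}$-independence hypothesis, and the irrationality of the coefficients $c_{i_j}$---which is exactly where total ergodicity of $T$ combined with $c_{i_j}\notin\mathbb{Z}_\ast$ is used---delivers the required nonvanishing.
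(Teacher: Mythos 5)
Your proposal is correct and follows essentially the same route as the paper: reduce to the dependent subfamily, use $W!$ to trivialize the rational-polynomial contributions, apply $\mathbb{Q}$-independence plus Weyl's criterion to equidistribute the irrational ones, and conclude nonvanishing from the factors $\int_0^1 e(-c_{i_j}y)\,dy\neq 0$. The only differences are cosmetic: you spell out the eigenfunction witness that the paper leaves implicit in ``it suffices to show,'' and you clear the denominators of $q$ through $W!$ instead of rescaling the dependence relation (just note that the constant term of $q$ is not multiplied by $W!$, so it should be absorbed into $r$, where it only contributes a unimodular constant).
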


\begin{proof}    By setting the functions $f_{i}$ to be constant 1 for polynomials outside of the set $\{p_{i_1},\ldots,p_{i_k}\}$,  we may assume without loss of generality that $\{i_{1},\dots,i_{k}\}=\{1,\dots,\ell\}$.
Then, there exist  $c_{i}\in S(T)\backslash\mathbb{Z}_{\ast},$ not all of them $0$ with \begin{equation}\label{E:tje}
c_{1}p_{1}+\dots+c_{\ell}p_{\ell}=q\in\mathbb{Q}[x]+\mathbb{R}.\end{equation} 
We may assume without loss of generality that $p_{1},\dots,p_{m}\notin\mathbb{Q}[x]+\mathbb{R}$   and $p_{m+1},\dots,p_{\ell}\in\mathbb{Q}[x]+\mathbb{R}$. By the assumption, we have that $p_{1},\dots,p_{m}$ are $\mathbb{Q}$-independent.
Multiplying both sides of \eqref{E:tje} by an integer if necessary, we may assume without loss of generality that $q\in\mathbb{Z}[x]+\mathbb{R}$. It is clear that
there exists $W_{0}\equiv W_0(p_1,\ldots,p_\ell) \in\mathbb{N}$
	such that for all $W\geq W_{0}$, we have that $p_{i}(W!n)\in\mathbb{Z}[n]+\mathbb{R},$ $m+1\leq i\leq \ell$. 
	It suffices to show that 
	\begin{equation}\nonumber
	\begin{split}
	\lim_{N\to\infty}\frac{1}{N}\sum_{n=1}^{N}e\Bigl(\sum_{i=1}^{\ell}c_{i}[p_{i}(W!n)]\Bigr)\neq 0.
	\end{split}
	\end{equation}
Note that the left-hand side of the previous relation is equal to
\[\lim_{N\to\infty}\frac{1}{N}\sum_{n=1}^{N}e\Bigl(q(W!n)-\sum_{i=1}^{\ell}c_{i}\{p_{i}(W!n)\}\Bigr) = \lim_{N\to\infty}\frac{1}{N}\sum_{n=1}^{N}e\Bigl(-\sum_{i=1}^{m}c_{i}\{p_{i}(W!n)\}\Bigr)\cdot v,\]
	where $v=e\left(q(0)-\sum_{i=m+1}^{\ell}c_{i}\{p_{i}(0)\}\right)$.
	
	Let $F\colon\mathbb{T}^{m}\to\mathbb{C},$ with $F(x_{1},\dots,x_{m})=e\Bigl(-\sum_{i=1}^{m}c_{i}x_{i}\Bigr).$ Then $F$ is a Riemann integrable function.  Since $p_{1},\dots,p_{m}$ are $\mathbb{Q}$-independent, the same is true for $p_{1}(W!\cdot),\dots,$ $p_{m}(W!\cdot)$. By Weyl's criterion, $(\{p_{1}(W!n)\},\dots,\{p_{m}(W!n)\})_{n}$ is equidistributed on $\mathbb{T}^{m}$. So, 
	\begin{eqnarray*}\label{tempeq1}
	\lim_{N\to\infty}\frac{1}{N}\sum_{n=1}^{N}e\Bigl(-\sum_{i=1}^{m}c_{i}\{p_{i}(W!n)\}\Bigr)
	& = &\int_{[0,1]^{m}}F(x_{1},\dots,x_{m})\,d(x_{1},\dots,x_{m})
	\\&= &\int_{[0,1]^{m}}e\Bigl(-\sum_{i=1}^{m}c_{i}x_{i}\Bigr)\,d(x_{1},\dots,x_{m})
	\\&=&\prod_{i=1}^{m}\int_{[0,1]}e(-c_{i}x)\,dx.
	\end{eqnarray*}
	For all $1\leq i\leq m$,
	since $c_{i}\notin\mathbb{Z}_\ast$, we have that $\int_{[0,1]}e(-c_{i}x)\,dx=1$ if $c_{i}=0$ and $\int_{[0,1]}e(-c_{i}x)\,dx=\frac{1-e(-c_{i})}{2\pi i c_{i}}\neq 0$ if $c_{i}\notin\mathbb{Z}$;
	 the proof is complete.
\end{proof}

Combining the previous two results, we get the following result.

\begin{corollary}\label{thm3}
Let $\ell\in\mathbb{N},$ $(X,\mathcal{B},\mu,T)$ be a totally ergodic system, and $p_{1},\dots,p_{\ell}\in\mathbb{Q}[x]+\mathbb{R}$ (resp. $p_{1},\dots,p_{\ell}\in\mathbb{R}[x]$ so that all the irrational polynomials in $p_1,\dots,p_\ell,$ if any, are $\mathbb{Q}$-independent). Then the following are equivalent:
\begin{enumerate}[(i)]
\item[$(i)$] $([p_{1}(n)])_n,\ldots,([p_{\ell}(n)])_n$ are totally jointly ergodic for $(X,\mathcal{B},\mu,T)$.
\item[$(ii)$] There exists $W_{0}\equiv W_0(p_1,\ldots,p_\ell)\in\mathbb{N}$ such that $([p_{1}(n)])_n,\dots,([p_{\ell}(n)])_n$ are $W!$-jointly ergodic for $(X,\mathcal{B},\mu,T)$ for all $W\geq W_{0}$.
\item[$(iii)$] There exists an infinite set $I\equiv I(p_1,\ldots,p_\ell)\subseteq\mathbb{N}$ such that $([p_{1}(n)])_n,\dots,([p_{\ell}(n)])_n$ are $W!$-jointly ergodic for $(X,\mathcal{B},\mu,T)$ for all $W\in I$.
\item[$(iv)$] 	$p_{1},\dots,p_{\ell}$ are $S(T)$-independent (resp. $S(T)\backslash\mathbb{Z}_{\ast}$-independent).
\end{enumerate}
\end{corollary}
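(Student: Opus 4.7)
The plan is to deduce Corollary~\ref{thm3} as a formal consequence of Propositions~\ref{thm1} and~\ref{thm2}, by closing the cycle (iv) $\Rightarrow$ (i) $\Rightarrow$ (ii) $\Rightarrow$ (iii) $\Rightarrow$ (iv). The two middle implications are purely definitional, so the real content lies in the outer two, each of which is essentially a direct invocation of one of those propositions.

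For (iv) $\Rightarrow$ (i) I would apply Proposition~\ref{thm1}. In the case where $p_{1},\dots,p_{\ell}\in\mathbb{R}[x]$ with $\mathbb{Q}$-independent irrational parts, the hypothesis in (iv) is precisely $S(T)\setminus\mathbb{Z}_{\ast}$-independence, which is exactly what Proposition~\ref{thm1} converts into total joint ergodicity for the fixed system $(X,\mathcal{B},\mu,T)$. In the case $p_{1},\dots,p_{\ell}\in\mathbb{Q}[x]+\mathbb{R}$, the condition stated in (iv) is at least as strong as $S(T)\setminus\mathbb{Z}_{\ast}$-independence (via the inclusion $S(T)\setminus\mathbb{Z}_{\ast}\subseteq S(T)$, after a short bookkeeping step), so Proposition~\ref{thm1} again delivers (i).

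The step (i) $\Rightarrow$ (ii) is immediate from the definition of total joint ergodicity: specializing to $r=0$ and replacing $W$ by $W!$ in that definition produces $W!$-joint ergodicity for every $W\in\mathbb{N}$, so any $W_{0}$ works, e.g. $W_{0}=1$. Next, (ii) $\Rightarrow$ (iii) is trivial by taking the infinite set $I=\{W\in\mathbb{N}:W\geq W_{0}\}$.

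For (iii) $\Rightarrow$ (iv) I would argue by contrapositive using Proposition~\ref{thm2}. If (iv) fails, then $p_{1},\dots,p_{\ell}$ admit a non-trivial $S(T)\setminus\mathbb{Z}_{\ast}$-relation, so the full family $\{p_{1},\dots,p_{\ell}\}$ itself serves as the nonempty subset required by Proposition~\ref{thm2}; moreover, the $\mathbb{Q}$-independence of the irrational polynomials in this subset is automatic (vacuously in the first case, by hypothesis in the second). Proposition~\ref{thm2} then yields $W_{0}\in\mathbb{N}$ such that for every $W\geq W_{0}$ the sequences $([p_{1}(n)])_{n},\dots,([p_{\ell}(n)])_{n}$ fail to be $W!$-jointly ergodic for $(X,\mathcal{B},\mu,T)$. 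An infinite $I$ satisfying (iii) would then have to be contained in $\{1,\dots,W_{0}-1\}$, a contradiction. The only real obstacle is the bookkeeping reconciling the two variants of the independence condition stated in (iv) with the uniform $S(T)\setminus\mathbb{Z}_{\ast}$-hypothesis of Propositions~\ref{thm1} and~\ref{thm2}; once this is handled, the cycle closes without any further computation.
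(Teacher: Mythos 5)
Your proposal is correct and follows the paper's proof essentially verbatim: the paper also closes the cycle by noting (i)$\Rightarrow$(ii)$\Rightarrow$(iii) are immediate, deducing (iii)$\Rightarrow$(iv) from Proposition~\ref{thm2} (applied, as you do, to the full family, whose irrational members are $\mathbb{Q}$-independent vacuously or by hypothesis) and (iv)$\Rightarrow$(i) from Proposition~\ref{thm1}. The ``bookkeeping'' you leave open is exactly the paper's one-line parenthetical: when $p_{1},\dots,p_{\ell}\in\mathbb{Q}[x]+\mathbb{R}$, $S(T)$-independence and $S(T)\backslash\mathbb{Z}_{\ast}$-independence coincide, since replacing each coefficient $c_{i}\in S(T)=\mathrm{Spec}(T)+\mathbb{Z}$ by its representative in $\mathrm{Spec}(T)\subseteq S(T)\backslash\mathbb{Z}_{\ast}$ changes the linear combination only by an integer combination of the $p_{i}$, which stays in $\mathbb{Q}[x]+\mathbb{R}$.
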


\begin{proof}
The implications $(i)\Rightarrow (ii)\Rightarrow (iii)$ are immediate, while the implications $(iii)\Rightarrow (iv)$ and $(iv)\Rightarrow (i)$ follow from Proposition~\ref{thm2} and Proposition~\ref{thm1} respectively. (When $p_{1},\dots,p_{\ell}\in\mathbb{Q}[x]+\mathbb{R},$ $S(T)\backslash\mathbb{Z}_{\ast}$-independence is equivalent to $S(T)$-independence.)
\end{proof}

\begin{remark}
By setting $\ell=1$ to the previous result, we get a characterization of when $([p(n)])_n,$ where $p\in \mathbb{R}[x],$ is totally ergodic for a specific totally ergodic system. 
\end{remark}

Notice that Corollary~\ref{thm3} also implies the result of Frantzikinakis and Kra (\cite[Theorem~1.1]{FK2}) which was presented in the introduction (in which joint ergodicity and total joint ergodicity are equivalent). Indeed, if $\{p_1,\ldots,p_\ell\}$ are rationally independent integer polynomials (in which case of course they belong to $\mathbb{Q}[x]+\mathbb{R}$), then they are $S(T)$-independent (since $S(T)$ consists of irrational numbers and $0$). So, by the previous corollary, we have total joint ergodicity, hence joint ergodicity.

\subsection{Total joint ergodicity for all totally ergodic systems}\label{sss:32}

We are now ready to use results from Subsections~\ref{SSJE} and \ref{ss:tjefs} to prove Theorems~\ref{cor1}, ~\ref{cor2}, ~\ref{cor3} and ~\ref{T:TP}; we start with Theorem~\ref{T:TP}.
 
\begin{proof}[Proof of Theorem~\ref{T:TP}] Let  $(X,\mathcal{B},\mu,T)$ be a totally ergodic system. Since every non-trivial linear combination
of $a_1,\ldots,a_\ell,$ with coefficients from $\mathbb{R}\backslash\mathbb{Q}_{\ast},$ stays logarithmically away from rational polynomials, the same is true for coefficients from $S(T)\backslash\mathbb{Z}_{\ast}$. By Proposition~\ref{thm1_TP}, $([a_{1}(n)])_n,\ldots,([a_{\ell}(n)])_n$ are totally jointly ergodic for $(X,\mathcal{B},\mu,T)$.
\end{proof}

Immediate implication of the previous result is Theorem~\ref{cor1}.

\begin{proof}[Proof of Theorem~\ref{cor1}]
The result follows from Theorem~\ref{T:TP} since when we restrict to $\mathcal{P},$ ``stays logarithmically away from rational polynomials'' is equivalent to not be in $\mathbb{Q}[x]+\mathbb{R}.$
\end{proof}
	
To prove Theorem~\ref{cor2}, we need the following lemma.

\begin{lemma}\label{construct}
For any $\ell\in \mathbb{N}$ and $c_{1},\dots,c_{\ell}\in\mathbb{R}$, there exist $D\in\mathbb{N}$ and a totally ergodic system $(X,\mathcal{B},\mu,T)$ such that $Dc_{1},\dots,Dc_{\ell}\in S(T)$. In particular, if $c_{1},\dots,c_{\ell}$ and $1$ are $\mathbb{Q}$-independent, we can choose $T$ so that $S(T)=\text{\emph{span}}_{\mathbb{Z}}\{1,c_{1},\dots,c_{\ell}\}$.
\end{lemma}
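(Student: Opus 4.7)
The plan is to realize the system as a rotation on a finite-dimensional torus, since for such systems the set $S(T)$ is entirely transparent. Concretely, if $\alpha=(\alpha_1,\ldots,\alpha_k)\in\mathbb{R}^k$ is chosen so that $1,\alpha_1,\ldots,\alpha_k$ are $\mathbb{Q}$-linearly independent, I would take $(X,\mathcal{B},\mu,T)$ to be $\mathbb{T}^k$ with the Haar measure and $Tx=x+\alpha$. The characters $\chi_m(x)=e(\langle m,x\rangle)$, $m\in\mathbb{Z}^k$, are eigenfunctions with eigenvalues $e(\langle m,\alpha\rangle)$, so $\text{Spec}(T)=\{\langle m,\alpha\rangle\bmod 1:m\in\mathbb{Z}^k\}$ and hence $S(T)=\text{span}_{\mathbb{Z}}\{1,\alpha_1,\ldots,\alpha_k\}$. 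The $\mathbb{Q}$-independence of $1,\alpha_1,\ldots,\alpha_k$ guarantees that no non-trivial eigenvalue is a root of unity, i.e.\ $T^n$ is ergodic for every $n\in\mathbb{N}$, giving total ergodicity. This handles the ``in particular'' clause immediately: when $1,c_1,\ldots,c_\ell$ are $\mathbb{Q}$-independent, set $k=\ell$, $\alpha_i=c_i$, and $D=1$.

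For the general statement, I would first extract a maximal $\mathbb{Q}$-linearly independent subset of $\{1,c_1,\ldots,c_\ell\}$ containing $1$, say $\{1,c_{i_1},\ldots,c_{i_k}\}$ with $0\leq k\leq\ell$. Then every $c_j$ admits a representation
\[
c_j=q_{j,0}+\sum_{s=1}^{k}q_{j,s}\,c_{i_s},\qquad q_{j,s}\in\mathbb{Q},
\]
and I would pick $D\in\mathbb{N}$ to be a common denominator of all the $q_{j,s}$, so that $Dq_{j,s}\in\mathbb{Z}$ for every admissible pair $(j,s)$.

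If $k\geq 1$, I would then invoke the construction of the first paragraph with $\alpha=(c_{i_1},\ldots,c_{i_k})$: the resulting rotation on $\mathbb{T}^k$ is totally ergodic with $S(T)=\text{span}_{\mathbb{Z}}\{1,c_{i_1},\ldots,c_{i_k}\}$, and by the choice of $D$ we get $Dc_j\in S(T)$ for each $1\leq j\leq\ell$. If $k=0$, then $c_1,\ldots,c_\ell\in\mathbb{Q}$, so $Dc_j\in\mathbb{Z}$; here any totally ergodic system works, because $0\in\text{Spec}(T)$ always (via the constant eigenfunction) and thus $\mathbb{Z}\subseteq S(T)$ automatically.

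I do not foresee a serious obstacle: the argument is essentially a linear-algebra bookkeeping combined with the standard computation of the spectrum of a torus rotation. The only point that deserves explicit care is verifying that the rotation on $\mathbb{T}^k$ is genuinely \emph{totally} ergodic (not merely ergodic), which reduces to the observation that $\mathbb{Q}$-independence of $1,c_{i_1},\ldots,c_{i_k}$ is preserved after multiplying the $c_{i_s}$'s by any $n\in\mathbb{N}$, so $T^n$ remains ergodic for every $n$.
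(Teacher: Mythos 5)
Your proposal is correct and follows essentially the same route as the paper: reduce by linear algebra to a $\mathbb{Q}$-independent family $1,c_{i_1},\ldots,c_{i_k}$, clear denominators to get $D$, and realize the system as the rotation on $\mathbb{T}^k$ by $(c_{i_1},\ldots,c_{i_k})$, whose total ergodicity and spectrum are exactly as you compute. The only cosmetic difference is that you spell out the character computation of $\mathrm{Spec}(T)$ and treat the all-rational case via an arbitrary totally ergodic system rather than the trivial one, neither of which changes the argument.
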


\begin{proof}
If all the $c_{i}$'s are rational numbers, then we may take $X$ to be the trivial system and $D$ to be the product of the denominators of the $c_{i}$'s. So, we assume that at least one of the $c_{i}$'s is irrational. 
It is not hard to see that there exist $a_{1},\dots,a_{m}\in\mathbb{R}$ for some $1\leq m\leq \ell$ such that $a_{1},\dots,a_{m},1$ are $\mathbb{Q}$-independent,  and that each of the $c_{1},\dots,c_{\ell}$ is a linear combination of $a_{1},\dots,a_{m},1$ with rational coefficients. So, there exists $D\in\mathbb{N}$ such that each of $Dc_{1},\dots,Dc_{\ell}$ is a linear combination of $a_{1},\dots,a_{m},1$ with integer coefficients. Hence, it suffices to construct a totally ergodic system %$(X,\mathcal{B},\mu,T)$
with $\{a_{1}\},\dots,\{a_{m}\}\in \text{Spec}(T)$.

Let $X=\mathbb{T}^{m}$ be endowed with the Haar measure and let $T\colon X\to X$, with \[T(x_{1},\dots,x_{m})=(x_{1}+a_{1},\dots,x_{m}+a_{m}).\] We have that $\{a_{1}\},\dots,\{a_{m}\}\in \text{Spec}(T)$. Moreover, since $a_{1},\dots,a_{m},1$ are $\mathbb{Q}$-independent, it is not hard to see that this system is totally ergodic.

The ``in particular'' part is straightforward from the proof (in which case $\{c_{1},\dots,c_{\ell}\}=\{a_{1},\dots,a_{m}\}$).
\end{proof}
	
\begin{proof}[Proof of Theorem~\ref{cor2}] Let $\{i_{1},\dots,i_{k}\}\subseteq\{1,\dots,\ell\}$ be such that $p_{i_{1}},\dots,p_{i_{k}}$ are not $\mathbb{R}\backslash\mathbb{Q}_{\ast}$-independent and all the irrational polynomials in $p_{i_{1}},\dots,p_{i_{k}}$ (if any) are $\mathbb{Q}$-independent. Assume that $c_{1}p_{i_{1}}+\dots+c_{k}p_{i_{k}}\in\mathbb{Q}[x]+\mathbb{R}$ for some $c_{1},\dots,c_{k}\in \mathbb{R}\backslash\mathbb{Q}_{\ast}$. 
By Lemma~\ref{construct}, there exists a totally ergodic system  $(X,\mathcal{B},\mu,T)$ and $D\in\mathbb{N}$ such that $Dc_{1},\dots,Dc_{k}\in S(T)$. Then $Dc_{1}p_{i_{1}}+\dots+Dc_{k}p_{i_{k}}\in\mathbb{Q}[x]+\mathbb{R}.$ Since $c_{1},\dots,c_{k}\notin\mathbb{Q}_{\ast}$, we have that $Dc_{1},\dots,Dc_{k}\in S(T)\backslash\mathbb{Z}_{\ast}$. 
By Proposition \ref{thm2}, there exists $W_{0}\in\mathbb{N}$ depending only on $p_{1},\dots,p_{\ell}$ such that  $([p_{1}(n)])_n,\ldots, ([p_{\ell}(n)])_n$ are not $W!$-jointly ergodic for $(X,\mathcal{B},\mu,T)$ for all $W\geq W_{0}$, from where the result follows. 
\end{proof}

The following is a corollary of Theorems~\ref{cor1} and ~\ref{cor2}.

\begin{corollary}\label{cor5.6}
 For $\ell\in\mathbb{N},$ let $p_{1},\dots,p_{\ell}\in\mathbb{Q}[x]+\mathbb{R}$ (resp. $p_{1},\dots,p_{\ell}\in\mathbb{R}[x]$ so that all the irrational polynomials in $p_1,\dots,p_\ell,$ if any, are $\mathbb{Q}$-independent). Then the following are equivalent:
 \begin{enumerate}[(i)]
 \item[$(i)$] $([p_{1}(n)])_n,\ldots,([p_{\ell}(n)])_n$ are totally jointly ergodic for every totally ergodic system.
 \item[$(ii)$]  There exists $W_{0}\equiv W_0(p_1,\ldots,p_\ell)\in\mathbb{N}$ such that for any $W\geq W_{0},$ $([p_{1}(n)])_n,\ldots,$ $([p_{\ell}(n)])_n$ are $W!$-jointly ergodic for every totally ergodic system.
 \item[$(iii)$] There exists an infinite set $I\equiv I(p_1,\ldots,p_\ell)\subseteq\mathbb{N}$ so that for any $W\in I,$ $([p_{1}(n)])_n,\ldots,$ $([p_{\ell}(n)])_n$ are $W!$-jointly ergodic for every totally ergodic system.
 \item[$(iv)$] $p_{1},\dots,p_{\ell}$ are $\mathbb{R}\backslash\mathbb{Q}_{\ast}$-independent.
 \end{enumerate}
\end{corollary}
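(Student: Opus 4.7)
The plan is to establish the equivalences via a short cyclic chain, using only Theorems~\ref{cor1} and \ref{cor2} (whose proofs, via Propositions~\ref{thm1} and \ref{thm2}, have already been carried out). The implications $(i) \Rightarrow (ii) \Rightarrow (iii)$ are essentially definitional: total joint ergodicity for every totally ergodic system means convergence to the expected limit along every progression $Wn+r$, which in particular, taking $r=0$ and $W$ replaced by $W!$, yields $W!$-joint ergodicity for all $W\in\mathbb{N}$; passing from ``all $W \geq W_0$'' to ``some infinite set $I$'' is trivial.

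The implication $(iv) \Rightarrow (i)$ is an immediate application of Theorem~\ref{cor1}, which does not even require the restrictive hypothesis on the $p_i$'s imposed in this corollary. So the only substantive step is $(iii) \Rightarrow (iv)$.

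For $(iii) \Rightarrow (iv)$, I would argue by contrapositive. Suppose $p_1,\dots,p_\ell$ are not $\mathbb{R}\setminus\mathbb{Q}_\ast$-independent. Then there is a nonempty subset $\{i_1,\dots,i_k\}\subseteq\{1,\dots,\ell\}$ for which $p_{i_1},\dots,p_{i_k}$ are $\mathbb{R}\setminus\mathbb{Q}_\ast$-dependent. Since the standing assumption of the corollary forces all irrational polynomials among $p_1,\dots,p_\ell$ (and hence among $p_{i_1},\dots,p_{i_k}$) to be $\mathbb{Q}$-independent, the hypothesis of Theorem~\ref{cor2} applies. Inspecting its proof, which in turn invokes Proposition~\ref{thm2}, we extract more than the bare conclusion of Theorem~\ref{cor2}: there is a totally ergodic system $(X,\mathcal{B},\mu,T)$ (produced via Lemma~\ref{construct}) \emph{and} an integer $W_0 \equiv W_0(p_1,\dots,p_\ell)\in\mathbb{N}$ such that for every $W \geq W_0$, the sequences $([p_1(n)])_n,\dots,([p_\ell(n)])_n$ fail to be $W!$-jointly ergodic for $(X,\mathcal{B},\mu,T)$. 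Since any infinite set $I\subseteq\mathbb{N}$ contains some $W \geq W_0$, this contradicts $(iii)$, completing the cycle.

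The only subtlety — and the step that requires a moment's care — is making sure that Proposition~\ref{thm2} really produces a single $W_0$ that works \emph{uniformly} for all $W\geq W_0$ (rather than just one bad $W$); but this is precisely the content of that proposition as stated, so no additional work is needed. I do not expect a genuine obstacle here: the corollary is essentially a bookkeeping consequence of the two main theorems, with $(iii) \Rightarrow (iv)$ being the only direction where one must quote the quantitative form of Theorem~\ref{cor2} rather than its bare statement.
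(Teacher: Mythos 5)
Your argument is correct and follows the paper's own proof essentially verbatim: (i)$\Rightarrow$(ii)$\Rightarrow$(iii) are immediate, (iv)$\Rightarrow$(i) is Theorem~\ref{cor1}, and (iii)$\Rightarrow$(iv) is obtained by contrapositive from the \emph{proof} of Theorem~\ref{cor2} (i.e., the uniform $W_0$ supplied by Proposition~\ref{thm2} together with Lemma~\ref{construct}), exactly as the paper does. Your remark that one must quote the quantitative form (all $W\geq W_0$) rather than the bare statement of Theorem~\ref{cor2} is precisely the point the paper relies on.
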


\begin{proof}
It is clear that (i)$\Rightarrow$(ii)$\Rightarrow$(iii). 
The implication (iii)$\Rightarrow$(iv) follows from the proof of Theorem~\ref{cor2}.
Finally, the implication (iv)$\Rightarrow$(i) follows from Theorem~\ref{cor1}.
\end{proof}

\begin{proof}[Proof of Theorem~\ref{cor3}]
Follows immediately by the previous corollary.
\end{proof}

By letting $\ell=1$ in Theorem~\ref{cor3}, we get that Conjecture~\ref{Conjecture:1} holds for a single real polynomial.

\begin{corollary}
\label{cor55}
Let $p\in\mathbb{R}[x]$. $p$ is $\mathbb{R}\backslash\mathbb{Q}_{\ast}$-independent\footnote{ I.e., $p$ cannot be written as $p=cq,$ where $q\in\mathbb{Q}[x]+\mathbb{R}$ and $c\in \mathbb{R}\backslash\mathbb{Q}_{\ast}$.}  if, and only if, $([p(n)])_n$ is totally ergodic for every totally ergodic system.
\end{corollary}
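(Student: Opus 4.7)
The plan is to derive the corollary as the $\ell=1$ specialization of Theorem \ref{cor3}. First I would verify that the hypotheses of Theorem \ref{cor3} are automatically met by a single polynomial $p\in\mathbb{R}[x]$: either $p\in\mathbb{Q}[x]+\mathbb{R}$, in which case the first clause of Theorem \ref{cor3} applies directly; or $p\notin\mathbb{Q}[x]+\mathbb{R}$, in which case the singleton family $\{p\}$ trivially satisfies the $\mathbb{Q}$-independence requirement on its irrational members. Indeed, if $cp\in\mathbb{Q}[x]+\mathbb{R}$ for some $c\in\mathbb{Q}_\ast$, then dividing by the nonzero rational $c$ forces $p\in\mathbb{Q}[x]+\mathbb{R}$, contradicting the assumption. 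Thus in either case Theorem \ref{cor3} is applicable with $\ell=1$.

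Next I would observe that, by the definitions given in Section \ref{Sec:1}, for $\ell=1$ the notion of total joint ergodicity of a single sequence reduces to total ergodicity of that sequence (the product on the right-hand side is just a single integral). Applying Theorem \ref{cor3} with $\ell=1$ therefore yields exactly the desired equivalence: $p$ is $\mathbb{R}\backslash\mathbb{Q}_\ast$-independent if and only if $([p(n)])_n$ is totally ergodic for every totally ergodic system. Unpacking the definition for a single polynomial, $\mathbb{R}\backslash\mathbb{Q}_\ast$-independence simply says that $p$ cannot be written as $cq$ with $c\in\mathbb{R}\backslash\mathbb{Q}_\ast$ and $q\in\mathbb{Q}[x]+\mathbb{R}$, matching the formulation in the corollary's footnote.

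There is no substantive obstacle, as the entire content is already contained in Theorem \ref{cor3}; the only minor bookkeeping item is the observation above that the side condition on irrational polynomials in Theorem \ref{cor3} is vacuous when the family consists of a single polynomial.
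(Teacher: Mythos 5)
Your proposal is correct and follows exactly the paper's route: the corollary is obtained by specializing Theorem~\ref{cor3} to $\ell=1$, with the observation that the $\mathbb{Q}$-independence side condition is automatic for a single polynomial. Your extra verification of that side condition and of the matching of definitions is accurate but just makes explicit what the paper leaves implicit.
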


While for $p_{1},\dots,p_{\ell}\in\mathbb{Q}[x]+\mathbb{R}$  (resp. $p_{1},\dots,p_{\ell}\in\mathbb{R}[x]$ so that all the irrational polynomials in $p_1,\dots,p_\ell,$ if any, are $\mathbb{Q}$-independent) we have that $W!$-joint ergodicity for infinitely many $W$'s is equivalent to total joint ergodicity (result that holds for a single arbitrary $p\in \mathbb{R}[x]$ as well), for two general polynomial iterates $p_1, p_2\in \mathbb{R}[x]$ we can have that $W!$-joint ergodicity for a co-finite set doesn't even imply joint ergodicity (see Proposition~\ref{apd_2}).

\section{Integrals of exponential functions on subtori}\label{sec:4}

In this section we will prove two statements, Propositions~\ref{lem:001} and ~\ref{lem:002}, that will help us deal with equidistribution results on subtori.  In particular, the integrals that we are computing in these two statements have connection with exponential sums, and will be used in the proof of our main result, Theorem~\ref{cor6}, later in the article. 

We first introduce some helpful notation: For $(x,y)\in\mathbb{R}^2\backslash\{(x,0)\colon x\in \mathbb{R}_\ast\}$ we let 
\[\left(\frac{x}{y}\right)_{\ast}:= \begin{cases} 
      x/y, & y\neq 0 \\
      1, & x=y=0 
   \end{cases}.\]
    In particular, for $(x,y)\in\mathbb{R}^2\backslash\{(x,0)\colon x\in \mathbb{R}_\ast\},$ 
   \[\left(\frac{x}{y}\right)_{\ast}\neq 0\;\Leftrightarrow\; x\neq 0 \text{ or } y=0.\]
Recall that $e(x)=e^{2\pi i x},$ $x\in\mathbb{R}.$ Using the previous notation, skipping the trivial computations, we have the following lemma.
 
\begin{lemma}\label{L:notation}
For all $\alpha\in\mathbb{R},$ $N\in\mathbb{N},$ and $t\in\mathbb{R}_\ast$, we have that 
    \begin{equation}\label{L121}
        \begin{split}
            \frac{1}{N}\sum_{n=0}^{N-1}e(\alpha n)=\left(\frac{e(\alpha N)-1}{N(e(\alpha)-1)}\right)_{\ast},
        \end{split}
    \end{equation}
and
\begin{equation}\label{L122}
        \begin{split}
            \frac{1}{t}\int_{0}^{t}e(\alpha x)\,dx=\left(\frac{e(\alpha t)-1}{2\pi i \alpha t}\right)_{\ast}.
        \end{split}
    \end{equation}
In particular, \eqref{L121} is nonzero if, and only if, $\alpha\notin (\mathbb{Z}/N)\backslash\mathbb{Z}$, and \eqref{L122} is nonzero if, and only if, $\alpha\notin (\mathbb{Z}/t)_\ast,$ where, for $s\in \mathbb{R}_\ast,$ we set $\mathbb{Z}/s:=\{a\in \mathbb{R}:\;as\in \mathbb{Z}\}.$ 
\end{lemma}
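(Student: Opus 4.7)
The plan is to handle each of the two formulas \eqref{L121} and \eqref{L122} by splitting into the degenerate and non-degenerate cases according to whether the denominator in the expression vanishes, and then to read off the nonvanishing conditions from the explicit formulas.

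For \eqref{L121}, I would first observe that $e(\alpha)=1$ exactly when $\alpha\in\mathbb{Z}$. In that case $e(\alpha n)=1$ for every $n$, so the sum equals $N$ and the left-hand side is $1$; simultaneously $e(\alpha N)-1=0$ and $N(e(\alpha)-1)=0$, so the right-hand side falls into the distinguished case $(0/0)_{\ast}=1$ of the notation, and both sides agree. If instead $\alpha\notin\mathbb{Z}$, then $e(\alpha)\neq 1$ and the standard geometric sum formula gives $\sum_{n=0}^{N-1}e(\alpha n)=\frac{e(\alpha N)-1}{e(\alpha)-1}$; dividing by $N$ yields exactly $\left(\frac{e(\alpha N)-1}{N(e(\alpha)-1)}\right)_{\ast}$, since the denominator is now nonzero and the starred notation reduces to ordinary division. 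The nonvanishing statement then follows: the expression is zero precisely when the numerator $e(\alpha N)-1$ vanishes while the denominator does not, i.e., when $\alpha N\in\mathbb{Z}$ but $\alpha\notin\mathbb{Z}$, which is the set $(\mathbb{Z}/N)\setminus\mathbb{Z}$.

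For \eqref{L122}, I would similarly split on whether $\alpha=0$. If $\alpha=0$, the integrand is identically $1$, so the left side is $1$, while on the right both $e(\alpha t)-1=0$ and $2\pi i\alpha t=0$, putting us again in the $(0/0)_{\ast}=1$ case. If $\alpha\neq 0$, an elementary antiderivative computation gives $\int_{0}^{t}e(\alpha x)\,dx=\frac{e(\alpha t)-1}{2\pi i\alpha}$, and dividing by $t$ (recall $t\in\mathbb{R}_{\ast}$) produces the claimed formula with a nonzero denominator. The value is zero exactly when $e(\alpha t)=1$ while $2\pi i\alpha t\neq 0$, that is, when $\alpha t\in\mathbb{Z}$ and $\alpha\neq 0$; this is precisely the set $(\mathbb{Z}/t)_{\ast}$.

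There is no real obstacle here: the content is the standard geometric-sum and exponential-integral identities, and the only subtlety is bookkeeping the degenerate case so that the $\left(\cdot\right)_{\ast}$ notation consistently yields $1$ when both numerator and denominator vanish. This is exactly what the author alludes to when writing ``skipping the trivial computations.''
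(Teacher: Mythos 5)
Your proof is correct: the paper states this lemma without proof (``skipping the trivial computations''), and your case split on whether the denominator vanishes, together with the geometric-sum and antiderivative identities and the reading-off of the vanishing conditions, is exactly the intended routine verification. Nothing is missing; the bookkeeping of the $(\cdot)_{\ast}$ convention in the degenerate cases $\alpha\in\mathbb{Z}$ (resp.\ $\alpha=0$) is handled correctly.
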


\medskip

In the rest of the paper, we will use Lemma \ref{L:notation} freely without citations. We start with an estimate for the integral of exponential functions along one dimensional subtori of $\mathbb{T}^{2}$.

\medskip

\begin{proposition}\label{lem:001}
Let $\alpha,\beta\in\mathbb{R},$ $a,b\in\mathbb{Z}_\ast$ with $\emph{\text{gcd}}(a,b)=1,$ $Y=\big\{(\{x\},\{y\})\colon (x,y)\in\mathbb{R}^{2},$ $ ax+by=0\big\},$ and $m_{Y}$ be the Haar measure on $Y.$ Then 
\[\int_{Y}e\big(\alpha x+\beta y\big)\,dm_{Y}(x,y)\neq 0\]
if, and only if, the following conditions hold:	$\frac{\alpha}{a}\notin (\mathbb{Z}/a)\backslash\mathbb{Z},\; \frac{\beta}{b}\notin (\mathbb{Z}/b)\backslash\mathbb{Z}, \frac{\alpha}{a}-\frac{\beta}{b}\notin \mathbb{Z}_\ast.$
\end{proposition}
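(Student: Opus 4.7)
My plan is to parametrize the subtorus $Y$ explicitly and reduce the integral to a one-variable Fourier computation. Since $\gcd(a,b)=1$, the line $\{ax+by=0\}\subset\mathbb{R}^2$ is parametrized by $t\mapsto(bt,-at)$, and the composition with the projection to $\mathbb{T}^2$ has kernel exactly $\mathbb{Z}$, giving a bijection $[0,1)\to Y$, $t\mapsto(\{bt\},\{-at\})$, under which $m_Y$ corresponds to Lebesgue measure on $[0,1)$. Therefore
\[\int_Y e(\alpha x+\beta y)\,dm_Y=\int_0^1 e(\alpha\{bt\})\cdot e(\beta\{-at\})\,dt=:I.\]
The three stated conditions and $I$ are invariant under the substitution $(a,b)\mapsto(-a,-b)$, so I may assume $b>0$ throughout.

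The next step is to Fourier-expand each factor on $[0,1)$. Partitioning $[0,1)$ into the intervals where $[bt]$ is constant and computing directly, one finds that the Fourier coefficients $A_m:=\int_0^1 e(\alpha\{bt\})e(-mt)\,dt$ vanish unless $b\mid m$, and for $m=bk$,
\[A_{bk}=\left(\frac{e(\alpha)-1}{2\pi i(\alpha-k)}\right)_{\!\ast},\]
equal to $1$ precisely when $k=\alpha\in\mathbb{Z}$ and to $0$ when $\alpha\in\mathbb{Z}\setminus\{k\}$. The analogous computation (using $\{-at\}=j+1-at$ on $(j/a,(j+1)/a)$) yields Fourier coefficients $B_n$ of $e(\beta\{-at\})$ supported on $|a|\mid n$, of the same shape. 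Parseval, together with $\gcd(a,b)=1$ forcing $|ab|$ to divide every $m$ in the joint support of $A_m$ and $B_{-m}$, gives
\[I=\sum_{m\in\mathbb{Z}}A_mB_{-m}=\sum_{r\in\mathbb{Z}}A_{abr}B_{-abr}.\]

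The remainder is a case analysis on whether $\alpha,\beta$ lie in $\mathbb{Z}$. If $\alpha\in\mathbb{Z}\setminus a\mathbb{Z}$, which is exactly $\frac{\alpha}{a}\in(\mathbb{Z}/a)\setminus\mathbb{Z}$ (failure of the first stated condition), then $e(\alpha)=1$ while $\alpha-ar\ne 0$ for every $r\in\mathbb{Z}$, so every $A_{abr}=0$ and $I=0$; symmetrically for the second condition. When the first two conditions hold, the main subcase $\alpha,\beta\notin\mathbb{Z}$ reduces to evaluating the absolutely convergent sum
\[\sum_{r\in\mathbb{Z}}\frac{1}{(\alpha-ar)(\beta-br)},\]
which, via partial fractions and the cotangent identity $\sum_{l\in\mathbb{Z}}\frac{1}{c-l}=\pi\cot(\pi c)$ for $c\notin\mathbb{Z}$ (interpreted as the symmetric limit), equals $\frac{\pi}{a\beta-b\alpha}\bigl(\cot(\pi\alpha/a)-\cot(\pi\beta/b)\bigr)$ when $a\beta\ne b\alpha$; by the $\pi$-periodicity of $\cot$ this vanishes precisely when $\frac{\alpha}{a}-\frac{\beta}{b}\in\mathbb{Z}_{\ast}$, i.e., exactly when the third condition fails. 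The degenerate case $a\beta=b\alpha$ is handled separately via $\sum_l(c-l)^{-2}=\pi^2/\sin^2(\pi c)$, which produces a nonzero value consistent with the third condition holding; the remaining mixed and fully integer subcases collapse to at most one surviving summand whose non-vanishing is again governed by the third condition. The main technical obstacle is precisely this bookkeeping across the four integrality subcases: the cotangent identity carries the generic subcase, while the first two conditions emerge exactly from the degenerate integer cases in which one of the two Fourier factors becomes a single Kronecker delta.
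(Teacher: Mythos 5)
Your proof is correct, but it follows a genuinely different route from the paper's. You parametrize $Y$ by $t\mapsto(\{bt\},\{-at\})$ exactly as the paper does, but from there the paper works entirely at the level of the integral: it splits $[0,1)$ into $ab$ subintervals of length $1/(ab)$, pulls out a common factor $\bigl(\tfrac{e(\alpha/a-\beta/b)-1}{2\pi i(\alpha/a-\beta/b)}\bigr)_{\ast}$ (which produces the third condition), and then evaluates the remaining exponential sum over $j$ by the Chinese remainder theorem as a product of two finite geometric sums, whose non-vanishing gives the first two conditions; the whole answer comes out as one closed-form product of elementary factors, with no infinite series and no case split on the integrality of $\alpha,\beta$. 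You instead Fourier-expand each factor, use Parseval together with $\gcd(a,b)=1$ to reduce to the bilinear sum $\sum_r A_{abr}B_{-abr}$, and evaluate it by partial fractions and the Mittag--Leffler/cotangent expansion, with a four-way case analysis according to whether $\alpha$ or $\beta$ is an integer. Your computations check out: the coefficients $A_{bk}$, $B_{-abr}$ are as you state (for either sign of $a$ after normalizing $b>0$), the splitting into two conditionally convergent cotangent sums is legitimate because the combined sum is absolutely convergent and you take symmetric partial sums, the degenerate case $a\beta=b\alpha$ via $\sum_l(c-l)^{-2}=\pi^2/\sin^2(\pi c)$ is nonzero as needed, and the mixed and fully integer subcases do collapse to at most one Kronecker-delta term whose survival matches the stated conditions. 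What the paper's method buys is a single uniform closed formula that transfers almost verbatim to the three-dimensional analogue (Proposition~\ref{lem:002}), which is why the authors set it up that way; what your method buys is a more structural explanation of where each condition comes from (vanishing of a whole arithmetic progression of Fourier coefficients versus cancellation in the cotangent difference), at the cost of the integrality bookkeeping and the appeal to the cotangent identities.
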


\begin{proof}
Let $a, b$ have the same sign, say $a,b>0$.\footnote{ The case where $a,b$ have different signs is very similar and gives the same result modulo a nonzero constant.} Since $ax+by=0$, we have $x=bt$, and $y=-at,$ for some $t\in \mathbb{R}$. So 
\begin{eqnarray*}\nonumber
 \int_{Y}e\big(\alpha x+\beta y\big)\,dm_{Y}(x,y)
& = & \int_{0}^{1}e\big(\alpha \{bt\}+\beta \{-at\}\big)\,dt
\\&= &\sum_{j=0}^{ab-1}\int_{\frac{j}{ab}}^{\frac{j+1}{ab}}e\big(\alpha \{bt\}+\beta \{-at\}\big)\,dt
\\&=&\sum_{j=0}^{ab-1}\int_{0}^{\frac{1}{ab}}e\Bigl(\alpha \Big\{bt+\frac{j}{a}\Big\}+\beta \Big\{-at-\frac{j}{b}\Big\}\Bigr)\,dt
\\&=&\sum_{j=0}^{ab-1}\int_{0}^{\frac{1}{ab}}e\Bigl(\alpha \{bt\}+\beta \{-at\}+\alpha\Big\{\frac{j}{a}\Big\}+\beta\Big\{-\frac{j}{b}\Big\}-\beta\Bigr)\,dt
\\&=&\sum_{j=0}^{ab-1}\int_{0}^{\frac{1}{ab}}e\Bigl(\alpha bt-\beta at+\alpha\Big\{\frac{j}{a}\Big\}+\beta\Big\{-\frac{j}{b}\Big\}\Bigr)\,dt
\\&=&\sum_{j=0}^{ab-1}e\Bigl(\alpha\Big\{\frac{j}{a}\Big\}+\beta\Big\{-\frac{j}{b}\Big\}\Bigr)\cdot \frac{1}{ab} \left(\frac{e\Big(\frac{\alpha}{a}-\frac{\beta}{b}\Big)-1}{2\pi i \Big(\frac{\alpha}{a}-\frac{\beta}{b}\Big)}\right)_{\ast}.
\end{eqnarray*}
By the Chinese remainder theorem, $\Big(\Big\{\frac{j}{a}\Big\},\Big\{-\frac{j}{b}\Big\}\Big),$ $0\leq j\leq ab-1,$ takes all the values of $\Big(\Big\{\frac{m}{a}\Big\},$ $\Big\{-\frac{n}{b}\Big\}\Big),$ $0\leq m\leq a-1,$ $0\leq n\leq b-1$, and so it takes each value exactly once. 
Then
\begin{eqnarray*}
\sum_{j=0}^{ab-1}e\Bigl(\alpha\Big\{\frac{j}{a}\Big\}+\beta\Big\{-\frac{j}{b}\Big\}\Bigr) & = &\sum_{k=0}^{a-1}\sum_{j=0}^{b-1}e\Bigl(\alpha\Big\{\frac{k}{a}\Big\}\Bigr)\cdot e\Bigl(\beta\Big\{-\frac{j}{b}\Big\}\Bigr)
\\& = & e(\beta)\sum_{k=0}^{a-1}\sum_{j=0}^{b-1}e\Bigl(\alpha\cdot\frac{k}{a}\Bigr)\cdot e\Bigl(-\beta\cdot\frac{j}{b}\Bigr)
\\ & = & e(\beta)ab \left(\frac{e(\alpha)-1}{a\Big(e\Big(\frac{\alpha}{a}\Big)-1\Big)}\right)_{\ast}\cdot\left(\frac{e(-\beta)-1}{b\Big(e\Big(-\frac{\beta}{b}\Big)-1\Big)}\right)_{\ast}.
\end{eqnarray*}
To sum up, we have that
\[\int_{Y}e\big(\alpha x+\beta y\big)\,dm_{Y}(x,y)\neq 0\;\;\text{if, and only if,}\;\;\] \[\frac{\alpha}{a}\notin (\mathbb{Z}/a)\backslash\mathbb{Z},\; \frac{\beta}{b}\notin (\mathbb{Z}/b)\backslash\mathbb{Z}, \frac{\alpha}{a}-\frac{\beta}{b}\notin \mathbb{Z}_\ast,\]
as was to be shown.				
\end{proof}	

We next  estimate  the integral of exponential functions along one dimensional subtori of $\mathbb{T}^{3}$. Such estimates turn out to be very difficult to compute. We only provide an estimate for a special case which will be used in our approach.

\begin{proposition}\label{lem:002}
Let $\alpha,\beta\in\mathbb{R}$, $w\in\mathbb{Z},$  $a,b,r\in\mathbb{Z}_\ast$  with $\emph{\text{gcd}}(a,b)=1,$ $Y=\big\{(\{x\},\{y\},\{z\})\colon$  $(x,y,z)\in\mathbb{R}^{3}, ax+by=rx+bz=0\big\},$ and $m_{Y}$ be the Haar measure on $Y$. Then 
\begin{equation}\nonumber
\begin{split}
\int_{Y}e\big(\alpha x+\beta y+wz\big)\,dm_{Y}(x,y,z)\neq 0			
\end{split}
\end{equation}
if, and only if, the following conditions hold:
\[-\frac{\alpha}{a}+\frac{\beta}{b}+\frac{rw}{ab}\notin \mathbb{Z}_\ast, -\frac{\alpha}{a}+\frac{rwb^{\ast}}{a}\notin (\mathbb{Z}/a)\backslash\mathbb{Z}, \frac{\beta}{b}+\frac{rwa^{\ast}}{b}\notin (\mathbb{Z}/b)\backslash\mathbb{Z},\]  where $a^{\ast}$ is the unique element in $[1,b-1]$ such that $aa^{\ast}\equiv 1 \mod b$, and $b^{\ast}$ is the unique element in $[1,a-1]$ such that $bb^{\ast}\equiv 1 \mod a.$ 
\end{proposition}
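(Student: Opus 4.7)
The plan is to proceed along the lines of Proposition~\ref{lem:001}: parametrize $Y$ by a single variable, split the parameter space into $ab$ subintervals, and apply the Chinese Remainder Theorem to factor the resulting exponential sum. The new ingredient compared to the two-dimensional case is that the $\{-rt\}$ coordinate contributes an extra linear term in $t$ (thanks to $w\in\mathbb{Z}$), which after the CRT bookkeeping is absorbed as twists of the two geometric sums.

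First I would verify that $t\mapsto(\{bt\},\{-at\},\{-rt\})$ is a group isomorphism $\mathbb{R}/\mathbb{Z}\to Y$: the kernel condition $bs,as,rs\in\mathbb{Z}$ forces $s\in\mathbb{Z}$ since $\gcd(a,b)=1$. Hence $m_Y$ pulls back to Lebesgue measure on $[0,1)$, and using $w\in\mathbb{Z}$ to write $e(w\{-rt\})=e(-wrt)$, the integral becomes $I=\int_0^1 e\bigl(\alpha\{bt\}+\beta\{-at\}-wrt\bigr)\,dt$. Partition $[0,1)$ into the $ab$ half-open subintervals of length $1/(ab)$. A direct computation shows that on the interior of each subinterval $[bt]=[j/a]$ and $[-at]=-[j/b]-1$. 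Setting $\gamma:=\alpha b-\beta a-wr$, the integrand simplifies to $e\bigl(-\alpha[j/a]+\beta[j/b]+\beta\bigr)\,e(\gamma t)$, whence
\[
I= e(\beta)\cdot\Bigl(\int_0^{1/(ab)} e(\gamma t)\,dt\Bigr)\cdot\sum_{j=0}^{ab-1}e\bigl(-\alpha[j/a]+\beta[j/b]+\gamma j/(ab)\bigr).
\]

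The heart of the computation is evaluating this sum. Substitute $[j/a]=(j-s_j)/a$ and $[j/b]=(j-s'_j)/b$, where $s_j,s'_j$ are the residues of $j$ modulo $a,b$ respectively; by the definition of $\gamma$, the coefficient of $j$ collapses to $-wr/(ab)$. Since $\gcd(a,b)=1$, the Chinese Remainder Theorem gives $j\equiv s_j b b^{\ast}+s'_j a a^{\ast}\pmod{ab}$, and because $w,r\in\mathbb{Z}$ every integer multiple of $wr$ contributes trivially inside $e(\cdot)$. The sum therefore factors as
\[
\Bigl(\sum_{s=0}^{a-1} e\bigl((\alpha-wrb^{\ast})s/a\bigr)\Bigr)\cdot\Bigl(\sum_{s'=0}^{b-1} e\bigl((-\beta-wra^{\ast})s'/b\bigr)\Bigr).
\]
Lemma~\ref{L:notation} then tells me exactly when each of the three scalar factors of $I$ is nonzero: the integral iff $\gamma/(ab)\notin\mathbb{Z}_{\ast}$; the first geometric sum iff $(\alpha-wrb^{\ast})/a\notin(\mathbb{Z}/a)\setminus\mathbb{Z}$; the second iff $(-\beta-wra^{\ast})/b\notin(\mathbb{Z}/b)\setminus\mathbb{Z}$. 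Using sign symmetry of $(\mathbb{Z}/a)\setminus\mathbb{Z}$ and $\mathbb{Z}_{\ast}$, these rewrite as the three conditions in the statement.

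I expect the main obstacle to be the careful bookkeeping of integer parts at the subinterval endpoints (the formula $[-at]=-[j/b]-1$ holds uniformly on each open subinterval, but confirming this requires a short case analysis on whether $b\mid j$), together with verifying that the CRT representation of $j$ introduces only integer shifts inside the exponential; this last step is where the hypothesis $w\in\mathbb{Z}$ is essential, since otherwise the linear-in-$t$ contribution from $\{-rt\}$ would not split cleanly across the two residue variables.
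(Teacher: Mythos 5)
Your proposal is correct and follows essentially the same route as the paper's proof: parametrize $Y$ by a single circle variable, split $[0,1)$ into $ab$ subintervals, factor the resulting exponential sum via the Chinese Remainder Theorem (using $w\in\mathbb{Z}$ to absorb the contribution of the third coordinate), and read off the nonvanishing conditions from Lemma~\ref{L:notation}; the differences (opposite sign of the parametrization, bookkeeping with integer parts in place of the paper's shifts of each subinterval to $[0,1/(ab)]$) are cosmetic. Like the paper, which treats only positive $a,b,r$ explicitly and relegates the other sign patterns to a footnote, you implicitly assume $a,b>0$; the remaining cases follow by the same routine sign adjustments.
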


\begin{proof}
Let $a, b, r>0.$\footnote{ We can assume that $r>0$ (this follows by the fact that $w\in \mathbb{Z}$); the case where $a,b$ have different signs is very similar and gives the same result modulo a nonzero constant.} Since $ax+by=rx+bz=0$, we may write $x=-bt,$ $y=at,$ and $z=rt,$ for some $t\in\mathbb{R}$.
	So, using the fact that $w$ is an integer, we get
		\begin{equation}\nonumber
			\begin{split}
			& \int_{Y}e\big(\alpha x+\beta y+wz\big)\,dm_{Y}(x,y,z)
			\\&=\int_{0}^{1}e\big(\alpha \{-bt\}+\beta \{at\}+w\{rt\}\big)\,dt
			\\&=\sum_{j=0}^{ab-1}\int_{\frac{j}{ab}}^{\frac{j+1}{ab}}e\big(\alpha \{-bt\}+\beta \{at\}+w\{rt\}\big)\,dt
			\\&=\sum_{j=0}^{ab-1}\int_{0}^{\frac{1}{ab}}e\Bigl(\alpha \Big\{-bt-\frac{j}{a}\Big\}+\beta \Big\{at+\frac{j}{b}\Big\}+w\Big\{rt+\frac{rj}{ab}\Big\}\Bigr)\,dt
			\\&=\sum_{j=0}^{ab-1}\int_{0}^{\frac{1}{ab}}e\Bigl(\alpha \{-bt\}+\beta \{at\}+w\{rt\}+\alpha \Big\{-\frac{j}{a}\Big\}+\beta \Big\{\frac{j}{b}\Big\}+w\Big\{\frac{rj}{ab}\Big\}-\alpha\Bigr)\,dt
			\\&=\sum_{j=0}^{ab-1}\int_{0}^{\frac{1}{ab}}e\Bigl((-\alpha b+\beta a+rw)t+\alpha \Big\{-\frac{j}{a}\Big\}+\beta \Big\{\frac{j}{b}\Big\}+w\Big\{\frac{rj}{ab}\Big\}\Bigr)\,dt
				\\&=\left(\frac{e\Big(-\frac{\alpha}{a}+\frac{\beta}{b}+\frac{rw}{ab}\Big)-1}{2\pi i\Big(-\frac{\alpha}{a}+\frac{\beta}{b}+\frac{rw}{ab}\Big)}\right)_{\ast}\cdot\frac{1}{ab}\sum_{j=0}^{ab-1}e\Bigl(\alpha \Big\{-\frac{j}{a}\Big\}+\beta \Big\{\frac{j}{b}\Big\}+w\Big\{\frac{rj}{ab}\Big\}\Bigr).
			\end{split}
			\end{equation}
 As we saw in Proposition~\ref{lem:001}, for all $0\leq j\leq ab-1,$ there exist unique $0\leq m\leq a-1,$ $0\leq n\leq b-1,$ so that 
\[\Big(\Big\{-\frac{j}{a}\Big\},\Big\{\frac{j}{b}\Big\}\Big)=\Big(\Big\{-\frac{m}{a}\Big\},\Big\{\frac{n}{b}\Big\}\Big).\] 
Hence, $j$ is the unique integer in $[0,ab-1]$ such that $j\equiv m \mod a$ and $j\equiv n \mod b,$ and so
	\[j\equiv mbb^{\ast}+naa^{\ast}\mod ab,\]
	where $a^{\ast}$ is the unique element in $[1,b-1]$ such that $aa^{\ast}\equiv 1 \mod b$, and $b^{\ast}$ is the unique element in $[1,a-1]$ such that $bb^{\ast}\equiv 1 \mod a$. We have that
 \[\Big\{\frac{rj}{ab}\Big\}=\Big\{\frac{r(mbb^\ast+naa^\ast)}{ab}\Big\},\]
so
	\begin{equation}\nonumber
			\begin{split}
			& \frac{1}{ab}\sum_{j=0}^{ab-1}e\Bigl(\alpha \Big\{-\frac{j}{a}\Big\}+\beta \Big\{\frac{j}{b}\Big\}+w\Big\{\frac{rj}{ab}\Big\}\Bigr)
			\\&=\frac{1}{ab}\sum_{m=0}^{a-1}\sum_{n=0}^{b-1}e\Bigl(\alpha \Big\{-\frac{m}{a}\Big\}+\beta \Big\{\frac{n}{b}\Big\}+w\Big\{\frac{r(mbb^{\ast}+naa^{\ast})}{ab}\Big\}\Bigr) 
			\\&=\frac{1}{ab}\sum_{m=0}^{a-1}\sum_{n=0}^{b-1}e\Bigl(m\Big(-\frac{\alpha}{a}+\frac{rwb^{\ast}}{a}\Big)+n\Big(\frac{\beta}{b}+\frac{rwa^{\ast}}{b}\Big)-\alpha\Bigr)
			\\&=e(-\alpha)\left(\frac{e\Big(-\alpha+rwb^{\ast}\Big)-1}{a\Big(e\Big(-\frac{\alpha}{a}+\frac{rwb^{\ast}}{a}\Big)-1\Big)}\right)_{\ast}\cdot\left(\frac{e\Big(\beta+rwa^{\ast}\Big)-1}{b\Big(e\Big(\frac{\beta}{b}+\frac{rwa^{\ast}}{b}\Big)-1\Big)}\right)_{\ast}.
			\end{split}
			\end{equation}
Putting everything together, we have that 
\begin{equation}\nonumber
\begin{split}
			& \int_{Y}e\Bigl(\alpha x+\beta y+wz\Bigr)\,dm_{Y}(x,y,z)\neq 0\;\;\text{if, and only if,}
\end{split}
\end{equation}
\[-\frac{\alpha}{a}+\frac{\beta}{b}+\frac{rw}{ab}\notin \mathbb{Z}_\ast,  -\frac{\alpha}{a}+\frac{rwb^{\ast}}{a}\notin (\mathbb{Z}/a)\backslash\mathbb{Z}, \frac{\beta}{b}+\frac{rwa^{\ast}}{b}\notin (\mathbb{Z}/b)\backslash\mathbb{Z},\] 
as was to be shown.
\end{proof}

\section{Estimating averages of exponential sums}\label{Section:6}

\subsection{Properties of polynomial orbits}

For a fixed triple of real polynomials $(p_1,p_2,p_3),$ with $p_1(0)=p_2(0)=p_3(0)=0,$ we define a subtorus 
$Y=Y(p_1,p_2,p_3),$ so that the sequence $\big(\{p_1(W!n)\},\{p_2(W!n)\},\{p_3(W!n)\}\big)_n$ is equidistributed on $Y$ for large enough $W$ (see the definition before Proposition~\ref{green}).
  
 For such a triple of polynomials, let 
\[K(p_{1},p_{2},p_{3}):=\big\{(k_{1},k_{2},k_{3})\in\mathbb{Z}^{3}:\; k_{1}p_{1}+k_{2}p_{2}+k_{3}p_{3}\in\mathbb{Q}[x]\big\},\]
and
\begin{equation*}
    \begin{split}
        & Y(p_{1},p_{2},p_{3}):=\big\{(\{x\},\{y\},\{z\})\colon (x,y,z)\in\mathbb{R}^3, k_{1}x+k_{2}y+k_{3}z=0,
        \\
        & \quad \quad\quad\quad\quad\quad\quad\quad\quad\quad\quad\quad\quad\quad\quad\quad\quad\quad\quad\quad\quad\quad\quad\quad\quad (k_{1},k_{2},k_{3})\in K(p_{1},p_{2},p_{3})\big\}.
    \end{split}
\end{equation*}
We remark that, for all  $W\in\mathbb{N},$ we have $K(p_{1}(W\cdot),p_{2}(W\cdot),p_{3}(W\cdot))=K(p_{1},p_{2},p_{3})$  and $Y(p_{1}(W\cdot),p_{2}(W\cdot),p_{3}(W\cdot))=Y(p_{1},p_{2},p_{3})$.

\medskip

We have the following equidistribution property.

\begin{lemma}\label{abc_general2}
Let $p_{1},p_{2},p_{3}\in\mathbb{R}[x]$ be polynomials  with $p_{1}(0)=p_{2}(0)=p_{3}(0)=0$.
If  $\big(\{p_{1}(n)\},$ $\{p_{2}(n)\}, \{p_{3}(n)\}\big)_n$ takes values in $Y(p_{1},p_{2},p_{3})$, then it is also 
equidistributed on $Y(p_{1},p_{2},p_{3})$.
\end{lemma}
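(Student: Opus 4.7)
The plan is to argue by contradiction and invoke Proposition~\ref{green} directly. First I would unpack the definition of $Y(p_{1},p_{2},p_{3})$: since $K(p_{1},p_{2},p_{3})$ is a subgroup of $\mathbb{Z}^{3}$, it is finitely generated, say by $v_{1},\dots,v_{r}\in K(p_{1},p_{2},p_{3})\subseteq\mathbb{Q}^{3}$. Then $Y(p_{1},p_{2},p_{3})$ coincides with the set of $(\{x\},\{y\},\{z\})$ for $(x,y,z)$ in the real subspace $H=\{u\in\mathbb{R}^{3}:\langle u,v_{i}\rangle=0,\;1\leq i\leq r\}$, so Proposition~\ref{green} is applicable with $m=3$ and these $v_{i}$'s; note also that $\text{span}_{\mathbb{Q}}\{v_{1},\dots,v_{r}\}=\text{span}_{\mathbb{Q}}K(p_{1},p_{2},p_{3})$. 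The hypothesis says precisely that $n\mapsto (p_{1}(n),p_{2}(n),p_{3}(n))$ takes values in $H$.

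Now suppose for contradiction that $(\{p_{1}(n)\},\{p_{2}(n)\},\{p_{3}(n)\})_{n}$ is \emph{not} equidistributed on $Y(p_{1},p_{2},p_{3})$. Because $p_{1}(0)=p_{2}(0)=p_{3}(0)=0$, Proposition~\ref{green} combined with Remark~\ref{rem1}(i) produces $k=(k_{1},k_{2},k_{3})\in\mathbb{Z}^{3}\setminus\text{span}_{\mathbb{Q}}K(p_{1},p_{2},p_{3})$ such that
\[ k_{1}p_{1}(n)+k_{2}p_{2}(n)+k_{3}p_{3}(n)\in\mathbb{Z}\quad\text{for every }n\in\mathbb{Z}. \]

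To close the argument I would invoke the classical fact that a real polynomial whose values on $\mathbb{Z}$ are all integers necessarily has rational coefficients (indeed, it is an integer combination of the binomial coefficients $\binom{x}{j}$). Applying this to $q(x):=k_{1}p_{1}(x)+k_{2}p_{2}(x)+k_{3}p_{3}(x)$ gives $q\in\mathbb{Q}[x]$, and hence by the very definition of $K(p_{1},p_{2},p_{3})$ we have $k\in K(p_{1},p_{2},p_{3})\subseteq\text{span}_{\mathbb{Q}}K(p_{1},p_{2},p_{3})$, contradicting the choice of $k$. There is no genuine obstacle here; the crucial point is that the assumption $p_{i}(0)=0$ lets us use the sharper ``integer-valued'' conclusion of Remark~\ref{rem1}(i) rather than only the $\bmod\,1$ version of Proposition~\ref{green}, and this sharper form is exactly strong enough to force the offending linear combination to have rational coefficients.
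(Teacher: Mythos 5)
Your proof is correct and is essentially the paper's own argument: assume non-equidistribution, apply Proposition~\ref{green} together with Remark~\ref{rem1}(i) (this is where $p_{1}(0)=p_{2}(0)=p_{3}(0)=0$ enters) to obtain $k\in\mathbb{Z}^{3}\setminus\text{span}_{\mathbb{Q}}K(p_{1},p_{2},p_{3})$ with $k_{1}p_{1}+k_{2}p_{2}+k_{3}p_{3}$ taking integer values on $\mathbb{Z}$, and then contradict the definition of $K(p_{1},p_{2},p_{3})$; your explicit appeal to the fact that an integer-valued polynomial lies in $\mathbb{Q}[x]$ is exactly the step the paper compresses into ``a contradiction by definition''. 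One caveat: the hypothesis does not say ``precisely'' that $(p_{1}(n),p_{2}(n),p_{3}(n))$ lies in $H$; it only says that its reduction mod $\mathbb{Z}^{3}$ lies in $Y$, i.e. $(p_{1}(n),p_{2}(n),p_{3}(n))\in H+\mathbb{Z}^{3}$ (take $p_{1}(n)=n$, $p_{2}=p_{3}=0$: the fractional parts lie in $Y=\{(0,0,0)\}$ while the orbit is not in $H=\{0\}$). This is harmless for the intended application: since the linear forms $\langle k,\cdot\rangle$, $k\in K(p_{1},p_{2},p_{3})$, take integer values along the orbit and vanish at $n=0$, one can subtract from $p$ an integer-valued polynomial vector vanishing at $0$ (a correction of the same kind as in the proof of Lemma~\ref{abc_general3}) so as to land genuinely in $H$ without changing any fractional parts, after which Proposition~\ref{green} and Remark~\ref{rem1}(i) apply verbatim; the paper's proof passes over this same point in silence, so in substance and in level of detail your proposal matches the paper's argument.
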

 
\begin{proof}
Suppose that this is not the case. Note that 
 $Y(p_{1},p_{2},p_{3})$  is a subtorus of $\mathbb{T}^3.$ 
 By Proposition \ref{green}, there exists $(k_{1},k_{2},k_{3})\in\mathbb{Z}^{3}$
 which does not belong to the $\mathbb{Q}$-span of $K(p_{1},p_{2},p_{3})$ such that $k_{1}p_{1}(n)+k_{2}p_{2}(n)+k_{3}p_{3}(n)\in\mathbb{Z}$ for all $n\in\mathbb{Z}$. However, this implies that $(k_{1},k_{2},k_{3})\in K(p_{1},p_{2},p_{3}),$ a contradiction by definition. Hence $\big(\{p_{1}(n)\}, \{p_{2}(n)\},$ $\{p_{3}(n)\}\big)_{n}$ is equidistributed on $Y(p_{1},p_{2},p_{3})$.
\end{proof}

\begin{remark}
A rather strong restriction in Lemma \ref{abc_general2} is that it is only applicable when $\big(\{p_{1}(n)\}, \{p_{2}(n)\}, \{p_{3}(n)\}\big)_n$   takes values in $Y(p_{1},p_{2},p_{3})$.
In general,  $\big(\{p_{1}(n)\}, \{p_{2}(n)\},$ $\{p_{3}(n)\}\big)_n$ takes values on finitely many shifted copies of $Y(p_{1},p_{2},p_{3})$. For example, if $p_{1}(n)=-\alpha n, p_{2}(n)=(\alpha+1/2)n$ and $p_{3}(n)=\beta n$ for some $\alpha,\beta\in\mathbb{R}$ with $\alpha,\beta,1$ being $\mathbb{Q}$-independent, then  
\begin{equation*}
    \begin{split}
         Y(p_{1},p_{2},p_{3}):=\big\{(\{x\},\{y\},\{z\})\colon (x,y,z)\in\mathbb{R}^3, x+y=0\big\}.
    \end{split}
\end{equation*}
However, $\big(\{p_{1}(n)\},$ $\{p_{2}(n)\}, \{p_{3}(n)\}\big)_n$ does not lie in $Y(p_{1},p_{2},p_{3})$, but in $Y(p_{1},p_{2},p_{3})\cup(Y(p_{1},p_{2},p_{3})+(0,1/2,0) \mod \mathbb{Z}^{3})$.
\end{remark}

To ensure that we can use Lemma \ref{abc_general2} for our purposes, we need another result (see Lemma~\ref{abc_general3} below). To state it, we need the following notation.

\medskip

Let $K_{0}$ be a basis of $K(p_{1},p_{2},p_{3})$. For each $(k_{1},k_{2},k_{3})\in K_{0}$, suppose that \[k_{1}p_{1}+k_{2}p_{2}+k_{3}p_{3}=g\] for some 
polynomial $g$ in $\mathbb{Q}[x]$.
If $Q(k_{1},k_{2},k_{3})$ denotes the smallest natural number such that
$g(Q(k_{1},k_{2},k_{3})!n)/\text{gcd}(k_{1},k_{2},k_{3})$ is integer-valued, we also let
\begin{equation}\label{defnw}
    W_{0}(p_{1},p_{2},p_{3}):=\min_{\substack{K_0\;\text{basis of} \\ K(p_{1},p_{2},p_{3})}}\max_{(k_{1},k_{2},k_{3})\in K_{0}}Q(k_{1},k_{2},k_{3}).
\end{equation}

\begin{lemma}\label{abc_general3}
Let $p_{1},p_{2},p_{3}\in\mathbb{R}[x]$ be polynomials with $p_{1}(0)=p_{2}(0)=p_{3}(0)=0$.
If $W\geq W_{0}(p_{1},p_{2},p_{3})$, where $W_0$ is given in \eqref{defnw},
then the sequence $\big(\{p_{1}(W!n)\},$ $\{p_{2}(W!n)\}, $ $ \{p_{3}(W!n)\}\big)_n$ takes values in $Y(p_{1},p_{2},p_{3}).$
\end{lemma}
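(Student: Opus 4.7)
The plan is to produce, for each $n$, an integer vector $(a,b,c)\in\mathbb{Z}^3$ witnessing that $(\{p_{1}(W!n)\},\{p_{2}(W!n)\},\{p_{3}(W!n)\})\in Y(p_{1},p_{2},p_{3})$. Writing $p:=(p_{1},p_{2},p_{3})$ and $H:=\{v\in\mathbb{R}^{3}: k\cdot v=0\text{ for all }k\in K(p_{1},p_{2},p_{3})\}$, membership of the fractional part in $Y(p_{1},p_{2},p_{3})$ amounts to the existence of $(a,b,c)\in\mathbb{Z}^{3}$ with $k\cdot(p(W!n)+(a,b,c))=0$ for every $k\in K(p_{1},p_{2},p_{3})$.

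Let $K_{0}=\{k^{(1)},\dots,k^{(r)}\}$ be the basis of $K(p_{1},p_{2},p_{3})$ realising the minimum in \eqref{defnw}, and set $g_{j}:=k^{(j)}\cdot p$ and $d_{j}:=\gcd(k^{(j)})$. Since $K_{0}$ is a $\mathbb{Z}$-basis of $K(p_{1},p_{2},p_{3})$ and $k\cdot p(W!n)=\sum_{j}a_{j}\,g_{j}(W!n)$ whenever $k=\sum_{j}a_{j}k^{(j)}$, the infinite family of equations above collapses to the finite system
\[
k^{(j)}\cdot(a,b,c)=-g_{j}(W!n),\qquad j=1,\dots,r.
\]
The right-hand sides are integers: by the definition of $Q_{j}:=Q(k^{(j)})$, the polynomial $g_{j}(Q_{j}!\,\cdot)/d_{j}$ is integer-valued on $\mathbb{Z}$; since $W\geq W_{0}\geq Q_{j}$ makes $W!/Q_{j}!$ a positive integer, the same holds for $g_{j}(W!\,\cdot)/d_{j}$, giving $g_{j}(W!n)\in d_{j}\mathbb{Z}\subseteq\mathbb{Z}$.

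It remains to show that this integer linear system is solvable in $\mathbb{Z}^{3}$, which is where the main subtlety lies. The key observation is that $K(p_{1},p_{2},p_{3})$ is \emph{saturated} (pure) in $\mathbb{Z}^{3}$: if $mk\in K(p_{1},p_{2},p_{3})$ for some nonzero $m\in\mathbb{Z}$, then $mk\cdot p\in\mathbb{Q}[x]$ forces $k\cdot p\in(1/m)\mathbb{Q}[x]=\mathbb{Q}[x]$, so $k\in K(p_{1},p_{2},p_{3})$. Hence $\mathbb{Z}^{3}/K(p_{1},p_{2},p_{3})$ is torsion-free and, by the elementary divisor theorem, $K_{0}$ may be taken (up to a change of basis by some element of $GL_{r}(\mathbb{Z})$, which does not affect solvability) to extend to a $\mathbb{Z}$-basis of $\mathbb{Z}^{3}$; equivalently, the homomorphism $\Phi:\mathbb{Z}^{3}\to\mathbb{Z}^{r}$, $\Phi(a,b,c):=(k^{(j)}\cdot(a,b,c))_{j=1}^{r}$, is surjective. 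Thus any integer right-hand side can be prescribed, finishing the proof. The main obstacle is precisely this simultaneity: knowing that each $g_{j}(W!n)$ is individually an integer is not by itself enough to prescribe all coordinates of $\Phi$ at once, and it is exactly the use of $\mathbb{Q}[x]$ (rather than $\mathbb{Z}[x]$) in the definition of $K(p_{1},p_{2},p_{3})$ that makes the subgroup pure and thereby guarantees the surjectivity of $\Phi$.
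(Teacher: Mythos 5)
Your proof is correct, and it takes a noticeably different route from the paper's. The paper works one basis relation at a time: for a fixed $k=(k_{1},k_{2},k_{3})\in K_{0}$ with $k_{1}p_{1}+k_{2}p_{2}+k_{3}p_{3}=g$, it picks B\'ezout coefficients $a_{1},a_{2},a_{3}$ with $k_{1}a_{1}+k_{2}a_{2}+k_{3}a_{3}=\gcd(k_{1},k_{2},k_{3})$ and uses that $g(Q!n)/\gcd(k_{1},k_{2},k_{3})\in\mathbb{Z}$ to exhibit an explicit lift $\bigl(p_{i}(Q!n)-a_{i}g(Q!n)/\gcd(k_{1},k_{2},k_{3})\bigr)_{i}$ annihilated by that particular $k$; this is exactly where the $\gcd$ in the definition of $Q(k_{1},k_{2},k_{3})$, hence of $W_{0}$, is used. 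You instead encode membership in $Y(p_{1},p_{2},p_{3})$ as a single integer linear system over the whole basis at once and solve it abstractly, via the observation that $K(p_{1},p_{2},p_{3})$ is saturated in $\mathbb{Z}^{3}$, so the evaluation map $\Phi$ is surjective; for this you only need $g_{j}(W!n)\in\mathbb{Z}$, not the divisibility by $\gcd(k^{(j)})$ (and indeed saturation forces the basis vectors to be primitive, so that divisor is harmless anyway). What your approach buys is precisely the simultaneity: when $K$ has rank at least two, the paper's lift depends on the chosen basis vector, and concluding membership in $Y$ from one relation at a time tacitly identifies $Y$ with the set cut out by the mod-$1$ relations — an identification that itself rests on the same saturation you prove explicitly; your argument closes that gap cleanly. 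What the paper's approach buys is a completely explicit, elementary correction term (no Smith normal form or duality needed) and a transparent explanation of why the $\gcd$ appears in the definition of $W_{0}$. Both proofs use the minimizing basis and the passage from $Q_{j}!$ to $W!$ via $W\geq W_{0}\geq Q_{j}$ in the same way.
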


\begin{proof}
Let $K_{0}$ be a basis of $K(p_{1},p_{2},p_{3})$ and 
  fix $(k_1,k_2,k_3)\in K_{0}$. 
  Then
  $k_1p_1+k_2p_2+k_3p_3=g\in \mathbb{Q}[x].$  For convenience denote $Q:=Q(k_{1},k_{2},k_{3})$.
  Let $a_{1},a_{2},a_{3}\in\mathbb{Z}$ be such that $k_{1}a_{1}+k_{2}a_{2}+k_{3}a_{3}=\text{gcd}(k_{1},k_{2},k_{3})$. Then $\{p_{i}(Q!n)\}=\left\{p_{i}(Q!n)-\frac{a_{i}g(Q!n)}{\text{gcd}(k_{1},k_{2},k_{3})}\right\}$ for $1\leq i\leq 3$. Moreover, 
  $$\sum_{i=1}^{3}k_{i}\left(p_{i}(Q!n)-\frac{a_{i}g(Q!n)}{\text{gcd}(k_{1},k_{2},k_{3})}\right)=g(Q!n)-g(Q!n)=0.$$
  So, $\big(\{p_1(Q!n)\}, \{p_2(Q!n)\}, \{p_3(Q!n)\}\big)$ 
 belongs to  $Y(p_{1},p_{2},p_{3})$. 
\end{proof}

As it was mentioned in previous sections, it is important for our study to consider the case of two rational polynomials where one is, or is not, a multiple of the other. In the latter case, we have the following helpful lemma. 

\begin{lemma}\label{c1}
Let $f,g\in\mathbb{Q}[x],$ $f(0)=g(0)=0,$ $g\not\equiv 0,$ and suppose that $f$ is not a multiple of $g$. If $af+bg\in \mathbb{Q}[x]$ for some $a,b\in\mathbb{R}$, then we must have that $a,b\in\mathbb{Q}$.
\end{lemma}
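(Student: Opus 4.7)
The plan is to compare coefficients of the identity $af+bg=h\in\mathbb{Q}[x]$ and to deduce, by a case analysis, that if $a$ or $b$ were irrational then $f$ would have to be a rational multiple of $g$, contradicting the hypothesis.

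First I would dispatch a few degenerate cases. Note that $f\not\equiv 0$, since otherwise $f=0\cdot g$ would contradict the assumption that $f$ is not a multiple of $g$. If $a=0$, then $bg=h\in\mathbb{Q}[x]$, and picking any index $i$ with $g_i\neq 0$ gives $b=h_i/g_i\in\mathbb{Q}$. Once $a\in\mathbb{Q}$ is known, the same argument applied to $bg=h-af\in\mathbb{Q}[x]$ yields $b\in\mathbb{Q}$. So the real task is to show $a\in\mathbb{Q}$ under the assumption $a\neq 0$.

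Writing $f(x)=\sum_{i\geq 1}f_i x^i$, $g(x)=\sum_{i\geq 1}g_i x^i$, and $h(x)=\sum_{i\geq 1}h_i x^i$ (all sums start at $i=1$ because $f(0)=g(0)=0$), the identity $af+bg=h$ becomes $af_i+bg_i=h_i\in\mathbb{Q}$ for every $i\geq 1$. Fix some index $i_0$ with $g_{i_0}\neq 0$, which exists because $g\not\equiv 0$. I would argue for contradiction: suppose $a\notin\mathbb{Q}$. For any index $i$ with $g_i=0$, the relation gives $af_i=h_i\in\mathbb{Q}$, and irrationality of $a$ forces $f_i=0$. For any index $i$ with $g_i\neq 0$, eliminating $b$ between the equations at indices $i$ and $i_0$ yields
\[
a\bigl(f_i g_{i_0}-f_{i_0}g_i\bigr)=h_i g_{i_0}-h_{i_0}g_i\in\mathbb{Q}.
\]
If the coefficient of $a$ on the left were nonzero, we would get $a\in\mathbb{Q}$, a contradiction; hence $f_i g_{i_0}=f_{i_0}g_i$, i.e.\ $f_i=c\, g_i$ with $c:=f_{i_0}/g_{i_0}\in\mathbb{Q}$. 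Combining both cases, $f_i=c\,g_i$ for every $i\geq 1$, so $f=cg$, contradicting the hypothesis that $f$ is not a rational multiple of $g$. Therefore $a\in\mathbb{Q}$, and then $b\in\mathbb{Q}$ as noted.

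There is no real obstacle here; the statement is essentially a linear-algebra observation, and the only subtle point to keep track of is that the ``$f$ is not a multiple of $g$'' hypothesis is a hypothesis over $\mathbb{Q}$, so the constant $c=f_{i_0}/g_{i_0}$ produced in the argument must itself be shown to be rational — which it is, as a ratio of rational coefficients.
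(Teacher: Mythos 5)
Your proof is correct and takes essentially the same route as the paper's: compare coefficients of $af+bg=h$, eliminate $b$ to see that irrationality of $a$ forces the minors $f_ig_j-f_jg_i$ to vanish, so $f$ would be a rational multiple of $g$, a contradiction. The only difference is bookkeeping — the paper runs the contradiction through $b\notin\mathbb{Q}$ first and then recovers $a\in\mathbb{Q}$, while you argue directly on $a$ (handling $a=0$ and the indices with $g_i=0$ separately) — which changes nothing substantive.
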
	

\begin{proof}
Suppose that $f(n)=\sum_{i=1}^{K}f_{i}n^{i}$ and  $g(n)=\sum_{i=1}^{K}g_{i}n^{i},$ for some $K\in \mathbb{N}$ and $f_{i},g_{i}\in\mathbb{Q}$. Then $af_{i}+bg_{i}\in\mathbb{Q}$  for all $1\leq i\leq K$. If $b\notin\mathbb{Q}$, then,
since $g_{i'}\neq 0$ for some $1\leq i'\leq K$ and $f_{i'},g_{i'}\in\mathbb{Q},$ we must have that $a\notin\mathbb{Q}$. Since $a,b\notin \mathbb{Q}$ and $af_i+bg_i\in \mathbb{Q}$ for all $1\leq i\leq K$, we must also have that $f_ig_j-f_jg_i=0$ for all $1\leq i,j\leq K,$ which implies that $f$ is a multiple of $g$, a contradiction. Hence, $b\in\mathbb{Q},$ which in turn implies that $af\in \mathbb{Q}[x]$, so $a\in \mathbb{Q}$ too (since $f$, being not a multiple of $g$, is not constant zero).
\end{proof}	

\subsection{Characterizing when an exponential sum is zero}\label{Ss:main_comp}

The purpose of this subsection is to study the average
\begin{equation}\label{target}
    \lim_{N\to\infty}\frac{1}{N}\sum_{n=1}^{N}e\Bigl(\sum_{i=1}^{2}t_{i}[p_{i}(n)]\Bigr),
\end{equation}
where $t_{1},t_{2}\in\mathbb{R}$. For convenience denote by $p_{3}:=t_{1}p_{1}+t_{2}p_{2}$.
For two special types of polynomial sequences $(p_{1},p_{2},p_{3})$, 
we will compute the subtorus $Y(p_{1},p_{2},p_{3})$, and partially characterize when \eqref{target} is equal to $0.$

The first type of $(p_{1},p_{2},p_{3})$ we are interested in is given in the following proposition.

\begin{proposition}\label{P:Case 1}
 Let $f,g\in\mathbb{Q}[x],$ $f(0)=g(0)=0,$ $g\not\equiv 0,$ 
 $f$ is not a multiple of $g$, $c\in \mathbb{R}\backslash\mathbb{Q},$ $u_{1},u_{2}\in\mathbb{Z}_\ast$ with $\emph{\text{gcd}}(u_{1},u_{2})=1,$ and $d\in\mathbb{Q}_\ast.$ Let  
 \begin{equation}\nonumber
\begin{split}
p_{1}=u_{1}(f+cg),\;\; p_{2}=u_{2}(f+(c+d)g),
\end{split}
\end{equation}
 $t_{1},t_{2}\in (\mathbb{R}\backslash\mathbb{Q})\cup\mathbb{Z},$ not both in $\mathbb{Z},$ and $p_{3}=t_{1}p_{1}+t_{2}p_{2}$.
We have:
\begin{enumerate}[(i)]
    \item If $t_{1}u_{1}+t_{2}u_{2}\notin\mathbb{Q}$, then
    \begin{equation}\nonumber
      Y(p_{1},p_{2},p_{3})=\big\{(\{x\},\{y\},\{z\})\colon (x,y,z)\in\mathbb{R}^3, u_{2}x-u_{1}y=0\big\},
  \end{equation}
  and
    \eqref{target} is zero if  $\big(\{p_{1}(n)\}, \{p_{2}(n)\}, \{p_{3}(n)\}\big)_n$ takes values in $Y(p_{1},p_{2},p_{3})$;
    \item If $t_{1}u_{1}+t_{2}u_{2}\in\mathbb{Z}$, and $t_{2},c,1$ are $\mathbb{Q}$-independent, then
    \begin{equation}\nonumber
      Y(p_{1},p_{2},p_{3})=\big\{(\{x\},\{y\},\{z\})\colon (x,y,z)\in\mathbb{R}^3, u_{2}x-u_{1}y=0\big\},
  \end{equation}
  and
    \eqref{target} is zero if  $\big(\{p_{1}(n)\}, \{p_{2}(n)\}, \{p_{3}(n)\}\big)_n$ takes values in $Y(p_{1},p_{2},p_{3})$;
    \item If $t_{1}u_{1}+t_{2}u_{2}\in\mathbb{Z}$, and $t_{2}=ac+b$ for some $a,b\in\mathbb{Q}$, then  
    \begin{equation}\nonumber
      Y(p_{1},p_{2},p_{3})=\big\{(\{x\},\{y\},\{z\})\colon (x,y,z)\in\mathbb{R}^{3},u_{2}x-u_{1}y=au_2dx-u_{1}z=0\big\}.
  \end{equation}
  Moreover, if  $\big(\{p_{1}(n)\}, \{p_{2}(n)\}, \{p_{3}(n)\}\big)_n$ takes values in $Y(p_{1},p_{2},p_{3})$ and $au_{2}d\in\mathbb{Z},$ then
    \eqref{target} is nonzero if, and only if, $\frac{ad}{u_{1}}\notin\mathbb{Z}_{\ast}.$
\end{enumerate}
\end{proposition}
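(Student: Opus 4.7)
The plan is to determine, for each case, the lattice $K(p_{1},p_{2},p_{3})$ and hence the subtorus $Y(p_{1},p_{2},p_{3})$, then use Lemma~\ref{abc_general2} to convert \eqref{target} into an integral over $Y$, and finally evaluate this integral with the formulas from Section~\ref{sec:4}. The computational starting point is the expansion in the basis $\{f,g\}$:
\[
k_{1}p_{1}+k_{2}p_{2}+k_{3}p_{3}=A\,f+B\,g,
\]
where $A=(k_{1}+k_{3}t_{1})u_{1}+(k_{2}+k_{3}t_{2})u_{2}$ and $B=Ac+(k_{2}+k_{3}t_{2})u_{2}d$. Since $f$ is not a rational multiple of $g$, Lemma~\ref{c1} translates $(k_{1},k_{2},k_{3})\in K(p_{1},p_{2},p_{3})$ into the system $A,B\in\mathbb{Q}$.

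Cases (i) and (ii) can be handled in parallel. In (i), $t_{1}u_{1}+t_{2}u_{2}\notin\mathbb{Q}$ combined with $A\in\mathbb{Q}$ forces $k_{3}=0$, after which the irrationality of $c$ together with $B=Ac+k_{2}u_{2}d\in\mathbb{Q}$ forces $A=0$. In (ii), writing $B=Ac+k_{2}u_{2}d+k_{3}t_{2}u_{2}d$ and invoking the $\mathbb{Q}$-independence of $\{1,c,t_{2}\}$ forces $A=0$ and $k_{3}u_{2}d=0$, so $k_{3}=0$. In either case $K=\mathbb{Z}\cdot(u_{2},-u_{1},0)$ and $Y$ has the claimed form, a two-dimensional subtorus of $\mathbb{T}^{3}$ in which the $z$-coordinate is unconstrained. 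Applying Lemma~\ref{abc_general2}, the average \eqref{target} equals $\int_{Y}e(z-t_{1}x-t_{2}y)\,dm_{Y}$; the Haar measure on $Y$ factors as the product of Haar measure on $\{u_{2}x-u_{1}y=0\}\subset\mathbb{T}^{2}$ and Haar measure on $\mathbb{T}$, and the inner integral $\int_{0}^{1}e(z)\,dz=0$ forces the average to vanish.

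Case (iii) requires more work. Substituting $t_{2}=ac+b$ in $B$ and regrouping yields $B=(A+k_{3}au_{2}d)c+(k_{2}+k_{3}b)u_{2}d$ whose second summand is in $\mathbb{Q}$, so irrationality of $c$ gives the key relation $A+k_{3}au_{2}d=0$, i.e.,
\[
k_{1}u_{1}+k_{2}u_{2}+k_{3}(N+au_{2}d)=0,
\]
where $N:=t_{1}u_{1}+t_{2}u_{2}\in\mathbb{Z}$. Under the assumption $au_{2}d\in\mathbb{Z}$ this is a single integer linear constraint, and since $\gcd(u_{1},u_{2})=1$ a $\mathbb{Z}$-basis is $(u_{2},-u_{1},0)$ together with any $(k_{1}^{(0)},k_{2}^{(0)},1)$ satisfying $k_{1}^{(0)}u_{1}+k_{2}^{(0)}u_{2}=-(N+au_{2}d)$, from which the claimed $Y$ is read off. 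A preliminary check shows $a\neq 0$ in case (iii) (otherwise $t_{2}\in\mathbb{Q}$, hence $t_{2}\in\mathbb{Z}$, and then $t_{1}u_{1}\in\mathbb{Z}$ forces $t_{1}\in\mathbb{Z}$, contradicting ``not both in $\mathbb{Z}$''); the same argument shows both $t_{1},t_{2}$ are necessarily irrational here. One then applies Proposition~\ref{lem:002} with $(\alpha,\beta,w)=(-t_{1},-t_{2},1)$ and $(u_{2},-u_{1},au_{2}d)$ playing the roles of $(a,b,r)$: the irrationality of $t_{1},t_{2}$ makes the two auxiliary non-vanishing conditions automatic, while the remaining one collapses, after clearing denominators, to $ad/u_{1}\notin\mathbb{Z}_{\ast}$.

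The main obstacle is the bookkeeping in case (iii): matching the parameters of Proposition~\ref{lem:002} correctly, handling the modular inverses $a^{\ast}$ and $b^{\ast}$, and checking that two of the three non-vanishing conditions are automatic while the remaining one reduces to the stated criterion. Should this reduction prove cumbersome, an equally effective alternative is to parametrize $Y$ as $t\mapsto(\{u_{1}t\},\{u_{2}t\},\{(N+au_{2}d)t\})$, partition $[0,1)$ into subintervals on which $[u_{1}t]$ and $[u_{2}t]$ are constant, and evaluate the resulting one-dimensional integral directly via Lemma~\ref{L:notation}; the criterion $ad/u_{1}\notin\mathbb{Z}_{\ast}$ then emerges from the phases arising in the calculation.
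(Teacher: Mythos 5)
Your overall strategy is the paper's: expand $k_1p_1+k_2p_2+k_3p_3$ in the basis $\{f,g\}$, use Lemma~\ref{c1} to compute $K(p_1,p_2,p_3)$, pass to an integral over $Y$ via Lemma~\ref{abc_general2}, and evaluate with Propositions~\ref{lem:001} and~\ref{lem:002}. Your cases (i) and (ii) are correct; in (ii) you bypass the paper's normalization by invoking the $\mathbb{Q}$-independence of $1,c,t_2$ directly, which works since $k_3=0$ comes out anyway.

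Case (iii), however, is internally inconsistent as written, and the inconsistency is exactly the normalization step the paper performs and you skip. Keeping the original $t_i$ you correctly derive the constraint $k_1u_1+k_2u_2+k_3(N+au_2d)=0$ with $N:=t_1u_1+t_2u_2\in\mathbb{Z}$; the subtorus it cuts out is $\{u_2x-u_1y=0,\ (N+au_2d)x-u_1z=0\}$, which agrees with the displayed $Y$ only when $N=0$, so ``the claimed $Y$ is read off'' is not justified. You then feed $r=au_2d$ into Proposition~\ref{lem:002}, which matches the displayed $Y$ but not your own $K$: with unnormalized $t_1,t_2$ the first non-vanishing condition becomes $N/(u_1u_2)-ad/u_1\notin\mathbb{Z}_\ast$, which does not collapse to $ad/u_1\notin\mathbb{Z}_\ast$ when $N\neq0$ (try $u_1=u_2=1$, $N=ad=1$). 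The consistent choice $r=N+au_2d$ does produce $-ad/u_1$ in that condition, but then you must also treat the edge case $N+au_2d=0$ (where Proposition~\ref{lem:002} requires $r\neq0$ and one falls back to Proposition~\ref{lem:001}), and the $Y$ you computed is still not the displayed one. The paper's fix, which you should insert at the start of (ii)--(iii): since $\gcd(u_1,u_2)=1$, replace $t_i$ by $t_i+m_i$ with $m_1u_1+m_2u_2=-N$; this changes neither \eqref{target} (as $[p_i(n)]\in\mathbb{Z}$) nor the hypotheses, and reduces to $t_1u_1+t_2u_2=0$, after which your basis for $K$, the displayed $Y$, and the application of Proposition~\ref{lem:002} with $r=au_2d$ all line up and yield $ad/u_1\notin\mathbb{Z}_\ast$. (Your closing alternative parametrizes $Y$ with third coordinate $\{(N+au_2d)t\}$, i.e.\ with the other convention, which shows the same conflation; also note you identify $Y$ only under $au_2d\in\mathbb{Z}$, while the statement asserts the formula for all rational $a$.) Your preliminary checks that $a\neq0$ and that $t_1,t_2$ are irrational in case (iii) are correct and are indeed needed, exactly as in the paper.
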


\begin{remark}\label{97}
As the corresponding $t_i$'s will be taken in $S(T),$ for a totally ergodic transformation, (i), (ii) and (iii) cover all possible cases since $S(T)\subseteq(\mathbb{R}\backslash\mathbb{Q})\cup\mathbb{Z}.$ 

Notice that in case (iii), even if $\big(\{p_{1}(n)\},$ $\{p_{2}(n)\}, \{p_{3}(n)\}\big)_n$ takes values in $Y(p_{1},p_{2},p_{3})$, we were only able to provide an estimate for \eqref{target} when $au_{2}d\in\mathbb{Z}.$ However, this is good enough for our purposes.
\end{remark}

\begin{proof}[Proof of Proposition~\ref{P:Case 1}]

For convenience, we denote by \[K:=K(p_{1},p_{2},p_{3}),\;\;\text{and}\;\; Y:=Y(p_{1},p_{2},p_{3}).\]
Note that \eqref{target} becomes
	\begin{equation}\nonumber
	\begin{split}
	&\lim_{N\to\infty}\frac{1}{N}\sum_{n=1}^{N}e\Bigl(\sum_{i=1}^{2}t_{i}[p_{i}(n)]\Bigr)
	\\&=\lim_{N\to\infty}\frac{1}{N}\sum_{n=1}^{N}e\Bigl(\Big\{\sum_{i=1}^{2}t_{i}p_{i}(n)\Big\}-\sum_{i=1}^{2}t_{i}\{p_{i}(n)\}\Bigr)
	\\&=\lim_{N\to\infty}\frac{1}{N}\sum_{n=1}^{N}e\Bigl(-t_{1}\{p_{1}(n)\}-t_{2}\{p_{2}(n)\}+\{p_{3}(n)\}\Bigr).
	\end{split}
	\end{equation}
	If $\big(\{p_{1}(n)\},$ $\{p_{2}(n)\}, \{p_{3}(n)\}\big)_n$ takes values in $Y$, then by Lemma \ref{abc_general2}, we have that
	\begin{equation}\label{E:NAY}
	\begin{split}
	 \lim_{N\to\infty}\frac{1}{N}\sum_{n=1}^{N}e\Bigl(\sum_{i=1}^{2}t_{i}[p_{i}(n)]\Bigr)
	=\int_{Y}e\big(-t_{1}x-t_{2}y+z\big)\,d \mu_{Y}(x,y,z).
	\end{split}
	\end{equation}
 
	In order to analyze the description of $Y$, we need to compute $K$. Recall that for $(k_{1},k_{2},k_{3})\in\mathbb{Q}^{3}$, we have that
 $(k_{1},k_{2},k_{3})\in K$ if
	\begin{equation}\label{18}
	\begin{split}
	&\quad k_{1}p_{1}+k_{2}p_{2}+k_{3}p_{3}= \big((k_{1}u_{1}+k_{2}u_{2}+k_{3}(t_{1}u_{1}+t_{2}u_{2}))c+k_{3}t_{2}u_{2}d+k_{2}u_{2}d\big)g	
	 \\& \quad\quad\quad\quad\quad\quad\quad\quad\quad\quad\quad \quad\quad\quad\quad\quad\quad+\big(k_{1}u_{1}+k_{2}u_{2}+k_3(t_1u_1+t_2u_2)\big)f \in\mathbb{Q}[n].
	\end{split}
	\end{equation}		
	Since $f$ is not a multiple of $g$,
then
	by Lemma \ref{c1}, (\ref{18}) holds if, and only if,
\begin{equation}\label{19}
k_3(t_1u_1+t_2u_2), (k_{1}u_{1}+k_{2}u_{2}+k_{3}(t_{1}u_{1}+t_{2}u_{2}))c+k_{3}t_{2}u_{2}d\in\mathbb{Q}.
\end{equation}

 We first prove part (i).
If 	$t_{1}u_{1}+t_{2}u_{2}\notin\mathbb{Q}$,   then
  (\ref{19}) holds if, and only if, $k_{3}=k_{1}u_{1}+k_{2}u_{2}=0$.
  In this case, $K$ is generated by the vector $(u_{2},-u_{1},0).$ Therefore,
  \begin{equation}\label{simpley}
      Y=\big\{(\{x\},\{y\},\{z\})\colon (x,y,z)\in\mathbb{R}^3, u_{2}x-u_{1}y=0\big\}.
  \end{equation}
Then \eqref{E:NAY} is 0 since $\int_{0}^{1}e(z)\,dz=0.$ This concludes the first part.

For part (ii), if $t_{1}u_{1}+t_{2}u_{2}\in\mathbb{Z}$, then 
we may replace $t_{i}$ by $t_{i}+m_{i}$ for some integer $m_{i}$ (notice that this does not change the sum or the conditions in parts (ii) or (iii)). Since $\text{gcd}(u_{1},u_{2})=1$, we may assume that $t_{1}u_{1}+t_{2}u_{2}=0$. 
	 (Indeed,  
  if $t_1u_1+t_2u_2=k,$ since $\text{gcd}(u_1,u_2)=1,$ by the Euclidean algorithm, there are integers $m_1, m_2$ such that $m_1u_1+m_2u_2=1,$ so, $km_1u_1+km_2u_2=k.$ Hence 
	$(t_1-km_1)u_1+(t_2-km_2)u_2=0.$)
 So, \eqref{19} becomes
 \begin{equation}\label{E:new_17}
     (k_1u_1+k_2u_2)c+k_3t_2u_2d\in \mathbb{Q}.
 \end{equation}

Suppose that $t_2, c, 1$ are $\mathbb{Q}$-independent. Since $u_{2}d\neq 0$,
  \eqref{E:new_17} holds if, and only if, $k_{3}=k_{1}u_{1}+k_{2}u_{2}=0$.
   Again in this case, $K$ is generated by the vector $(u_{2},-u_{1},0)$. As before, $Y$ is given by (\ref{simpley}), and so \eqref{E:NAY} is equal to 0. This concludes part (ii).

For part (iii), suppose that $t_2=ac+b,$ for some $a, b\in \mathbb{Q}$ with $au_2 d\in \mathbb{Z}.$
If $t_2\in \mathbb{Z},$ then, since $t_1u_1+t_2u_2=0$ and $t_{1}\in (\mathbb{R}\backslash\mathbb{Q})\cup\mathbb{Z},$ we have to have $t_1\in \mathbb{Z},$ a contradiction. Hence, we have that $t_2\notin\mathbb{Q}$  
(which also implies that $t_1\notin \mathbb{Q}$). Then, \eqref{E:new_17} is equivalent to  \[k_1u_1+k_2u_2+k_3au_2d=0.\]
 Therefore, $K$ is generated by the vectors $(u_{2},-u_{1},0)$ and $(au_{2}d,0,-u_{1})$. So,
\[Y=\big\{(\{x\},\{y\},\{z\})\colon (x,y,z)\in\mathbb{R}^{3},u_{2}x-u_{1}y=au_2dx-u_{1}z=0\big\}.\]
Since $au_{2}d\in\mathbb{Z}_{\ast}$,
setting $a=u_{2}, b=-u_{1}, r=au_{2}d, \alpha=-t_{1}, \beta=-t_{2}$ and $w=1$ in Proposition~\ref{lem:002}, we have that \eqref{E:NAY} is nonzero if, and only if, the following hold:
\begin{itemize}
    \item $\frac{ad}{u_{1}}\notin\mathbb{Z}_{\ast}$;
    \item $\frac{t_{1}}{u_{2}}+(-u_{1})^{\ast}ad\notin (\mathbb{Z}/u_{2})\backslash \mathbb{Z}$;
    \item $\frac{t_{2}}{u_{1}}-\frac{u_{2}^{\ast}au_{2}d}{u_{1}}\notin (\mathbb{Z}/u_{1})\backslash \mathbb{Z}$.
\end{itemize}   

Since $t_{1},t_{2}\notin\mathbb{Q}$, the last two conditions are always true. So, the average is nonzero if, and only if, $\frac{ad}{u_1}\notin\mathbb{Z}_{\ast}.$
This proves part (iii) and completes the proof.
\end{proof}

The second type of $(p_{1},p_{2},p_{3})$ we are interested in is given in the following proposition.

\begin{proposition}\label{P:Case 2}
 Let $g\in\mathbb{Q}[x],$ $g(0)=0,$ $g\not\equiv 0,$ 
     $c\in \mathbb{R}\backslash\mathbb{Q},$ $u_{1},u_{2}\in\mathbb{Z}_\ast$ with $\emph{\text{gcd}}(u_{1},u_{2})=1,$ and $d\in\mathbb{Q}_\ast.$  Let  
 \begin{equation}\nonumber
p_1=u_1cg,\;\;\;p_2=u_2(c+d)g,
\end{equation}
 $t_{1},t_{2}\in\mathbb{R},$ not both in $\mathbb{Z},$ and $p_{3}=t_{1}p_{1}+t_{2}p_{2}.$
We have:
\begin{enumerate}[(i)]
    \item   If $t_{1}u_{1}c+t_{2}u_{2}(c+d),c$ and 1 are $\mathbb{Q}$-independent, then
    \begin{equation}\nonumber
      Y(p_{1},p_{2},p_{3})=\big\{(\{x\},\{y\},\{z\})\colon (x,y,z)\in\mathbb{R}^3, u_{2}x-u_{1}y=0\big\}.
  \end{equation}
  Moreover, if $(\{p_{1}(n)\},\{p_{2}(n)\}, \{p_{3}(n)\})_n$ takes values in $Y(p_{1},p_{2},p_{3})$, then
   \eqref{target} is zero;
    \item  If $t_{1}u_{1}c+t_{2}u_{2}(c+d)=sc+t$
for some $s,t\in\mathbb{Q}$,   then
    \begin{equation}\nonumber
      Y(p_{1},p_{2},p_{3})=\big\{(\{x\},\{y\},\{z\})\colon (x,y,z)\in\mathbb{R}^3, u_{1}y-u_{2}x=u_{1}z-sx=0\big\}.
  \end{equation}
  Moreover, if $\big(\{p_{1}(n)\},\{p_{2}(n)\}, \{p_{3}(n)\}\big)_n$ takes values in $Y(p_{1},p_{2},p_{3})$ and $s\in\mathbb{Z},$ then
    \eqref{target} is nonzero if, and only if, the following hold: 
\begin{itemize}
    \item $\frac{t-t_{2}u_{2}d}{u_{1}u_{2}c}\notin \mathbb{Z}_\ast$;
    \item $\frac{t_{1}}{u_{2}}+\frac{s(-u_{1})^{\ast}}{u_{2}}\notin (\mathbb{Z}/u_{2})\backslash\mathbb{Z}$;
    \item $\frac{t_{2}}{u_{1}}-\frac{su_{2}^{\ast}}{u_{1}}\notin (\mathbb{Z}/u_{1})\backslash\mathbb{Z}$.
\end{itemize}
\end{enumerate}
\end{proposition}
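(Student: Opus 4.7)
The plan is to mirror the proof of Proposition~\ref{P:Case 1}: first compute the lattice $K(p_1,p_2,p_3)$, then read off the subtorus $Y(p_1,p_2,p_3)$ from its definition, convert the average in \eqref{target} into an integral over $Y$ via Lemma~\ref{abc_general2}, and finally evaluate that integral either directly or by invoking Proposition~\ref{lem:002}. The starting computation is
\[
k_1p_1+k_2p_2+k_3p_3 \;=\; \bigl[(k_1u_1+k_2u_2)c + k_2u_2d + k_3\tau\bigr]\,g,
\]
where $\tau := t_1u_1c + t_2u_2(c+d)$. Since $g\in\mathbb{Q}[x]$ is nonzero and $k_2u_2d\in\mathbb{Q}$, membership in $\mathbb{Q}[x]$ is equivalent to $(k_1u_1+k_2u_2)c+k_3\tau\in\mathbb{Q}$.

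For part~(i), the $\mathbb{Q}$-independence of $\tau,c,1$ forces $k_3=0$ and $k_1u_1+k_2u_2=0$, so $K(p_1,p_2,p_3)$ is generated by $(u_2,-u_1,0)$ and $Y(p_1,p_2,p_3)$ is as claimed. Assuming the orbit $\bigl(\{p_1(n)\},\{p_2(n)\},\{p_3(n)\}\bigr)_n$ takes values in this subtorus, Lemma~\ref{abc_general2} converts the target average into $\int_Y e(-t_1x-t_2y+z)\,dm_Y$, which vanishes because $z$ is unconstrained along $Y$ and $\int_0^1 e(z)\,dz=0$.

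For part~(ii), substituting $\tau=sc+t$ with $s,t\in\mathbb{Q}$ reduces the condition to $k_1u_1+k_2u_2+k_3s=0$. When $s\in\mathbb{Z}$, using $\gcd(u_1,u_2)=1$ I will produce $(k_1^\ast,k_2^\ast)\in\mathbb{Z}^2$ with $k_1^\ast u_1+k_2^\ast u_2=s$, giving $K=\mathbb{Z}(u_2,-u_1,0)+\mathbb{Z}(k_1^\ast,k_2^\ast,-1)$. Eliminating the auxiliary parameters between the two resulting linear relations on $(x,y,z)$ produces the stated description $Y=\{(\{x\},\{y\},\{z\}):u_1y-u_2x=u_1z-sx=0\}$. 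I will then apply Proposition~\ref{lem:002} with $a=u_2$, $b=-u_1$, $r=s$, $\alpha=-t_1$, $\beta=-t_2$, $w=1$.

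The main bookkeeping obstacle is translating the three nonvanishing conditions of Proposition~\ref{lem:002} into the form stated. The first condition, $-\alpha/a+\beta/b+rw/(ab)\notin\mathbb{Z}_\ast$, becomes $(t_1u_1+t_2u_2-s)/(u_1u_2)\notin\mathbb{Z}_\ast$; to rewrite it as $(t-t_2u_2d)/(u_1u_2c)\notin\mathbb{Z}_\ast$ I will use the identity $t_1u_1+t_2u_2-s=(t-t_2u_2d)/c$, obtained by equating the coefficients of $c$ in $\tau=sc+t$. The other two conditions translate directly: the modular inverses $b^\ast,a^\ast$ become $(-u_1)^\ast\bmod u_2$ and $u_2^\ast\bmod u_1$, respectively, and the sign of $b=-u_1$ is absorbed into the nonzero constant allowed by the footnote to Proposition~\ref{lem:002}.
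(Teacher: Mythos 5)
Your proposal matches the paper's proof essentially step for step: the same computation of $K$ via the condition $(k_1u_1+k_2u_2)c+k_3\tau\in\mathbb{Q}$, the same reduction of \eqref{target} to $\int_Y e(-t_1x-t_2y+z)\,dm_Y$ via Lemma~\ref{abc_general2}, vanishing in (i) because $z$ is unconstrained on $Y$, and in (ii) an application of Proposition~\ref{lem:002} with a sign-flipped but equivalent parameter choice (the paper takes $a=-u_2$, $b=u_1$, $r=-s$, $\alpha=-t_1$, $\beta=-t_2$, $w=1$) together with the identity $(t_1u_1+t_2u_2-s)c=t-t_2u_2d$ to obtain the three stated conditions. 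The only quibble is that you derive the description of $Y$ in (ii) only for $s\in\mathbb{Z}$ (by producing exact integer generators of $K$), while the statement asserts it for all $s,t\in\mathbb{Q}$; this is harmless, since $Y$ depends only on the $\mathbb{Q}$-span of $K$, which is the plane $k_1u_1+k_2u_2+k_3s=0$ for any rational $s$, exactly as in the paper's argument.
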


\begin{remark}\label{98}
 As with Proposition~\ref{P:Case 1} (iii), in case (ii) of the previous statement,  we were only able to provide an estimate for \eqref{target} for the special case where $s\in\mathbb{Z}$, which suffices for our purposes.
\end{remark}

\begin{proof}[Proof of Proposition~\ref{P:Case 2}]
For convenience, we denote by \[K:=K(p_{1},p_{2},p_{3}),\;\;\text{and}\;\; Y:=Y(p_{1},p_{2},p_{3}).\]
Similar to the proof of Proposition~\ref{P:Case 1}, if $\big(\{p_{1}(n)\},$ $\{p_{2}(n)\}, \{p_{3}(n)\}\big)_n$ takes values in $Y$, then
 \begin{equation}\label{E:NAY_2}
	\begin{split}
	 \lim_{N\to\infty}\frac{1}{N}\sum_{n=1}^{N}e\Bigl(\sum_{i=1}^{2}t_{i}[p_{i}(n)]\Bigr)
	=\int_{Y}e\big(-t_{1}x-t_{2}y+z\big)\,d \mu_{Y}(x,y,z).
	\end{split}
	\end{equation}
 
	In order to analyze the description of $Y$, we need to compute $K$.
 Note that for $(k_{1},k_{2},k_{3})\in \mathbb{Q}^{3}$, we have $(k_{1},k_{2},k_{3})\in K$ if, and only if,
\begin{equation}\label{qwer_1}
\begin{split}
     (k_{1}u_{1}+k_{2}u_{2})c+k_{3}(t_{1}u_{1}c+t_{2}u_{2}(c+d))
    \in\mathbb{Q}.
\end{split}    
\end{equation}

We first prove part (i).
Since $t_{1}u_{1}c+t_{2}u_{2}(c+d),c$ and 1 are $\mathbb{Q}$-independent, we have that \eqref{qwer_1} holds if, and only if,
\[k_3=k_{1}u_{1}+k_{2}u_{2}=0.\]
Therefore, $K$ is generated by the vector $(u_{2},-u_{1},0)$. So,
$Y$ is the same as case (i) of Proposition~\ref{P:Case 1}, and (\ref{target}) equals $0$. This concludes part (i).

We next prove part (ii).
Since $c$ is irrational, \eqref{qwer_1} holds if, and only if, \[k_{1}u_{1}+k_{2}u_{2}+k_{3}s=0.\] Therefore, $K$ is generated by the vectors $(u_{2},-u_{1},0)$ and $(s,0,-u_{1})$. So, we have that
\[Y(p_{1},p_{2},p_{3})=\big\{(\{x\},\{y\},\{z\})\colon (x,y,z)\in\mathbb{R}^3, u_{1}y-u_{2}x=u_{1}z-sx=0\big\}.\]
By Proposition~\ref{lem:002}  for $a=-u_2, b=u_1, r=-s, \alpha=-t_1, \beta=-t_2,$ and $w=1,$ \eqref{E:NAY_2} is nonzero if, and only if, the following hold:
\begin{itemize}
    \item $\frac{t_{1}}{u_{2}}+\frac{t_{2}}{u_{1}}-\frac{s}{u_{1}u_{2}}=\frac{t-t_{2}u_{2}d}{u_{1}u_{2}c}\notin \mathbb{Z}_\ast$;
    \item $\frac{t_{1}}{u_{2}}+\frac{s(-u_{1})^{\ast}}{u_{2}}\notin (\mathbb{Z}/u_{2})\backslash\mathbb{Z}$;
    \item $\frac{t_{2}}{u_{1}}-\frac{su_{2}^{\ast}}{u_{1}}\notin (\mathbb{Z}/u_{1})\backslash\mathbb{Z}$.
\end{itemize}
This completes the proof of the statement.
\end{proof}

\section{Characterizing total joint ergodicity for two terms}\label{sec:7}

In this section, we prove our main result on polynomial iterates, i.e., Theorem~\ref{cor6}.

\subsection{Type-B pairs}\label{Ss:TB}
Our first step is to reduce the total joint ergodicity problem to the case of some special pairs which are closely related to the pairs appearing in
Theorem \ref{cor6} case (ii), which we refer as ``Type-B''. 

\begin{definition*}\label{D:TB}
Let $(X,\mathcal{B},\mu,T)$ be a system.
We say that the pair $(p_{1},p_{2})$ of real polynomials is of \emph{Type-B for the system $(X,\mathcal{B},\mu,T)$} if
 there exist $f,g\in\mathbb{Q}[x],$ $f(0)=g(0)=0,$ $g\not\equiv 0,$ $c\in S(T)\backslash\mathbb{Z},$ $u_{1},u_{2}\in\mathbb{Z}_\ast$ with $\text{gcd}(u_{1},u_{2})=1,$ and $d\in\mathbb{Q}_\ast,$ such that  
\begin{equation}\label{equ:assumption2}
\begin{split}
p_{1}=u_{1}(f+cg),\;\; p_{2}=u_{2}(f+(c+d)g).
\end{split}
\end{equation}
\end{definition*}

 \begin{proposition}\label{case2}
Let $(X,\mathcal{B},\mu,T)$ be a totally ergodic system.
    If  $p_{1},p_{2}$ are $S(T)\backslash\mathbb{Z}_{\ast}$-dependent with $p_{1}(0)=p_{2}(0)=0$, then either
    \begin{enumerate}
    \item[$(i)$] there exists $W_0\equiv W_{0}(p_{1},p_{2})\in\mathbb{N}$  such that $([p_{1}(n)])_n, ([p_{2}(n)])_n$ are not $W!$-jointly ergodic for $(X,\mathcal{B},\mu,T)$ for all $W\geq W_{0}$; or
    \item[$(ii)$] $(p_1,p_2)$ is of Type-B for $(X,\mathcal{B},\mu,T)$.
    \end{enumerate}
\end{proposition}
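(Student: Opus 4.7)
The plan is to test non-joint-ergodicity directly via the specific character $(c_1,c_2)$ provided by the $S(T)\setminus\mathbb{Z}_\ast$-dependence hypothesis, and to compute the relevant limit using equidistribution on the subtorus $Y(p_1,p_2)\subseteq\mathbb{T}^2$. The structural upshot will be that this character's integral is nonzero in every structural case but one, which matches exactly the Type-B form. First I handle the degenerate reductions: if $p_1\equiv 0$ (or $p_2\equiv 0$), then for a nonconstant $f_1$ the average $f_1\cdot\frac{1}{N}\sum_n T^{[p_2(W!n)]}f_2$ cannot converge in $L^2$ to $\int f_1\int f_2$, so (i) holds; and if, say, $c_2=0$, then $c_1p_1=q\in\mathbb{Q}[x]$ with $c_1\in S(T)\setminus\mathbb{Z}$ irrational makes $\{p_1\}$ a singleton $S(T)\setminus\mathbb{Z}_\ast$-dependent family with its sole (irrational) polynomial trivially $\mathbb{Q}$-independent, and Proposition~\ref{thm2} with $k=1$ yields (i). Hence I may assume $p_1,p_2\not\equiv 0$ and both $c_1,c_2\in S(T)\setminus\mathbb{Z}$ irrational.

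Because $q(0)=0$ and $q\in\mathbb{Q}[x]$, I can choose $W_0$ large enough that for all $W\ge W_0$ we have $q(W!n)\in\mathbb{Z}$ for every $n$ and (by Lemma~\ref{abc_general3}, whose argument applies verbatim to pairs) the orbit $(\{p_1(W!n)\},\{p_2(W!n)\})_n$ is equidistributed on $Y(p_1,p_2)$. Using
\[c_1[p_1(W!n)]+c_2[p_2(W!n)]=q(W!n)-c_1\{p_1(W!n)\}-c_2\{p_2(W!n)\},\]
the $W!$-character sum reduces to $\frac{1}{N}\sum_n e(-c_1\{p_1(W!n)\}-c_2\{p_2(W!n)\})\to\int_{Y(p_1,p_2)}e(-c_1x-c_2y)\,d\mu_Y$. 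Since $c_1,c_2$ are irrational, they reduce mod $1$ to a nontrivial pair in $\text{Spec}(T)^2$, so a nonzero value of this integral immediately gives (i).

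Now I exhaust the possibilities for the $\mathbb{Z}$-lattice $K:=K(p_1,p_2)\subseteq\mathbb{Z}^2$. When $K=\mathbb{Z}^2$ (both $p_i\in\mathbb{Q}[x]$), $K=\mathbb{Z}(1,0)$ or $\mathbb{Z}(0,1)$ (exactly one $p_i\in\mathbb{Q}[x]$), or $K=\{(0,0)\}$, the subtorus $Y$ is respectively $\{(0,0)\}$, $\{0\}\times\mathbb{T}$ (or $\mathbb{T}\times\{0\}$), or $\mathbb{T}^2$, and the integral factors as a product of terms $\int_\mathbb{T}e(-c_ix)\,dx$, each nonzero because $c_i$ is irrational and nonzero; thus (i) holds. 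The only remaining case is $K=\mathbb{Z}(a,b)$ with $\gcd(a,b)=1$ and $ab\neq 0$, in which both $p_1,p_2\notin\mathbb{Q}[x]$ and $Y$ is the $1$-dimensional subtorus $\{ax+by\equiv 0\}$. Proposition~\ref{lem:001} applied with $\alpha=-c_1,\beta=-c_2$ says the integral is nonzero iff three conditions hold; the first two are automatic from the irrationality of $c_1,c_2$, so the integral can fail to be nonzero only when the third fails, i.e.\ $(c_2a-c_1b)/(ab)=m\in\mathbb{Z}_\ast$. If this third condition still holds, (i) follows.

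The key step, and the main obstacle, is to extract Type-B in the exceptional case $c_2a-c_1b=mab$ for some $m\in\mathbb{Z}_\ast$. Set $q':=ap_1+bp_2\in\mathbb{Q}[x]$; since $\det\bigl(\begin{smallmatrix}a&b\\ c_1&c_2\end{smallmatrix}\bigr)=mab\neq 0$, the $2\times 2$ system determines $(p_1,p_2)$ from $(q',q)$, and after substituting $c_2=bc_1/a+mb$ one obtains
\[p_1=b\Bigl(\frac{mq'-q}{mab}+c_1\cdot\frac{q'}{ma^2b}\Bigr),\qquad p_2=-a\Bigl(\frac{mq'-q}{mab}+(c_1-ma)\cdot\frac{q'}{ma^2b}\Bigr).\]
Putting $u_1:=b$, $u_2:=-a$, $c:=c_1$, $d:=-ma$, $f:=(mq'-q)/(mab)$, $g:=q'/(ma^2b)$, one verifies $\gcd(u_1,u_2)=1$, $c\in S(T)\setminus\mathbb{Z}$, $d\in\mathbb{Q}_\ast$, and $f,g\in\mathbb{Q}[x]$ with $f(0)=g(0)=0$. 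The only delicate check is $g\not\equiv 0$: if $q'\equiv 0$, then $ap_1+bp_2=0$ with $\gcd(a,b)=1$ forces $p_1=-b\tilde p,p_2=a\tilde p$ for some polynomial $\tilde p$, and then $c_1p_1+c_2p_2=mab\,\tilde p=q\in\mathbb{Q}[x]$ forces $\tilde p\in\mathbb{Q}[x]$ and hence $p_1,p_2\in\mathbb{Q}[x]$, contradicting the rank-$1$ case with $ab\neq 0$. Hence (ii) holds. This rigid $2\times 2$ inversion is exactly what does not generalize to $\ell\ge 3$, consistent with the paper's remark that the method is $\ell=2$-specific.
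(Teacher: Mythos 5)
Your proof is correct, and its core coincides with the paper's: you test $W!$-joint ergodicity against the eigenfunction character determined by the dependence witness $(c_1,c_2)$, identify the limit of the resulting Weyl sum with $\int_Y e(-c_1x-c_2y)\,dm_Y$ over a subtorus of $\mathbb{T}^2$, use Proposition~\ref{lem:001} for the nonvanishing criterion, and in the exceptional case invert the $2\times 2$ system to exhibit the Type-B form; with $(a,b)=(d_1,d_2)$ your parameters $c=c_1$, $d=-ma=r/d_2$, $(u_1,u_2)=(b,-a)$ match the paper's representation up to signs and a regrouping of the rational part into $f$. The differences are organizational rather than substantive. The paper first dispatches the cases where some $p_i\in\mathbb{Q}[x]$ or where $p_1,p_2$ are $\mathbb{Q}$-independent by citing Proposition~\ref{thm2}, and in the remaining $\mathbb{Q}$-dependent case obtains equidistribution on the line $\{d_1x+d_2y=0\}$ via the substitution $\{p_2(W!n)\}=\{-\tfrac{d_1}{d_2}p_1(W!n)\}$ (using divisibility of $q'(W!n)$ by $d_1d_2$) together with Proposition~\ref{green}; you instead run a uniform analysis over the lattice $K(p_1,p_2)$ and its subtorus $Y(p_1,p_2)$, which relies on two-variable analogues of Lemmas~\ref{abc_general2} and~\ref{abc_general3} --- stated in the paper only for triples, but their proofs (and Proposition~\ref{green}, which is dimension-free) do carry over, as you assert. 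Your dichotomy is also sharper: you pass to Type-B only when the integral genuinely vanishes, i.e.\ $c_2a-c_1b\in ab\,\mathbb{Z}_\ast$, whereas the paper does so whenever the determinant $r$ is a nonzero rational; both versions yield the stated disjunction. Your preliminary reductions (the cases $p_i\equiv 0$ handled directly, and a vanishing coefficient handled by Proposition~\ref{thm2} applied to a singleton) are sound, as is the key check $g\not\equiv 0$, which mirrors the paper's argument that $q'\equiv 0$ would force $p_2\in\mathbb{Q}[x]$.
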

 \begin{proof}
  If $p_{1}\in\mathbb{Q}[x]$ and $p_{2}\notin\mathbb{Q}[x],$ in which case $p_{2}$ is $\mathbb{Q}$-independent (the case $p_{2}\in\mathbb{Q}[x]$ and $p_{1}\notin\mathbb{Q}[x]$ is analogous), or $p_{1},p_{2}\in\mathbb{Q}[x],$ or $p_{1},p_{2}\notin\mathbb{Q}[x]$ and $p_1,p_2$ are $\mathbb{Q}$-independent, then, by Proposition~\ref{thm2},
  there exists $W_{0}\in\mathbb{N}$ depending only on $p_{1},p_{2}$ such that
  $([p_{1}(n)])_n, ([p_{2}(n)])_n$ are not $W!$-jointly ergodic for $(X,\mathcal{B},\mu,T)$ for all $W\geq W_{0}$.
  
  The remaining case is when $p_{1},p_{2}\notin\mathbb{Q}[x]$ and  $p_{1},p_{2}$ are $\mathbb{Q}$-dependent. Then, we have
  \begin{equation}\label{equ:assumption}
  \begin{split}
   c_{1}p_{1}+c_{2}p_{2}=q,\;\; d_{1}p_{1}+d_{2}p_{2}=q'
  \end{split}
  \end{equation}
for some $q, q'\in\mathbb{Q}[x]$, $c_{1},c_{2}\in S(T)\backslash\mathbb{Z}_\ast$ not both $0$, and $d_{1},d_{2}\in\mathbb{Z}_\ast$  with $\text{gcd}(d_1,d_2)=1$ (since $p_1, p_2\notin \mathbb{Q}[x]$). 
We remark that $d_{1},d_{2}$ and $q'$ are independent of $S(T)$.
For convenience we let
$r:=\det\begin{bmatrix} 
c_{1} & c_{2} \\
d_{1} & d_{2} 
\end{bmatrix}$.
 
There exists $W_{0}\in\mathbb{N}$ depending on $p_{1},p_{2},$ such that for all $W\geq W_{0}$ and $n\in\mathbb{N}$,
$q(W!n)$ is an integer, and $q'(W!n)$ is an integer which is divisible by $d_{1}d_{2}.$\footnote{ This is another reason why we assume that $p_1,p_2$ take the value $0$ at $0.$ If some of $q, q'$ was constant, then the last claim would not be true in general.}

To study the $W!$-joint ergodicity property, we have to consider the following average:
	\begin{equation}\label{targetw}
	\begin{split}
	\lim_{N\to\infty}\frac{1}{N}\sum_{n=1}^{N}e\Bigl(\sum_{i=1}^{2}c_{i}[p_{i}(W!n)]\Bigr),
	\end{split}
	\end{equation}
	 which equals to
	\begin{equation}\nonumber
	\begin{split}
	&
	\lim_{N\to\infty}\frac{1}{N}\sum_{n=1}^{N}e\Bigl(\sum_{i=1}^{2}c_{i}p_{i}(W!n)-\sum_{i=1}^{2}c_{i}\{p_{i}(W!n)\}\Bigr)
	\\&=\lim_{N\to\infty}\frac{1}{N}\sum_{n=1}^{N}e\Bigl(-c_{1}\{p_{1}(W!n)\}-c_{2}\{p_{2}(W!n)\}\Bigr).
	\end{split}
	\end{equation}

	  Since $p_{1}(W!\cdot)\notin\mathbb{Q}[x]$, and $d_{1}p_{1}(W!n)+d_{2}p_{2}(W!n)=q'(W!n)\in d_{1}d_{2}\mathbb{Z}$, we have that the last line of the previous relation is equal to
	\begin{equation}\label{E:18}
	    \lim_{N\to\infty}\frac{1}{N}\sum_{n=1}^{N}e\Bigl(-c_{1}\{p_{1}(W!n)\}-c_{2}\Big\{-\frac{d_{1}}{d_{2}}p_{1}(W!n)\Big\}\Bigr).
	\end{equation}
	For every $n,$ 
	we have that  
	$\Big(\{p_{1}(W!n)\},\Big\{-\frac{d_{1}}{d_{2}}p_{1}(W!n)\Big\}\Big)$ takes value in the subtorus \[Y:=\big\{(\{x\},\{y\}):\;(x,y) \in \mathbb{R}^2, \;d_1x+d_2y=0\big\}.\]
Note that $p_{1}(0)=p_{2}(0)=0$.
 If $\Big(\{p_{1}(W!n)\},\Big\{-\frac{d_{1}}{d_{2}}p_{1}(W!n)\Big\}\Big)_{n}$ is not equidistributed on $Y$ then, by Proposition \ref{green}, there exist $(k_{1},k_{2})\in\mathbb{Z}^{2}\backslash \text{span}_{\mathbb{Q}}\{(d_{1},d_{2})\}$
such that 
\[k_{1}p_{1}(W!n)+k_{2}\Big(-\frac{d_{1}}{d_{2}}p_{1}(W!n)\Big)\in\mathbb{Z}.\]
Since $p_{1}(W!n)\notin\mathbb{Z}$ for some $n$, we have that  $k_{1}d_{2}=k_{2}d_{1},$ a contradiction to the assumption that $(k_{1},k_{2})\notin \text{span}_{\mathbb{Q}}\{(d_{1},d_{2})\}$.
So $\Big(\{p_{1}(W!n)\},\Big\{-\frac{d_{1}}{d_{2}}p_{1}(W!n)\Big\}\Big)_{n}$ is equidistributed on $Y$.
	
Let $F\colon\mathbb{T}^{2}\to\mathbb{C}$ be the function $F(x,y)=e\big(-c_{1}x-c_{2}y\big).$ Then $F$ is Riemann integrable and  \eqref{E:18} is equal to
\begin{equation}\label{E:19}
\int_{Y}e\big(-c_{1}x-c_{2}y\big)\,d m_{Y}(x,y).
\end{equation}
	
Since at least one of the $c_{1},c_{2}$ is irrational, we have that either both are irrational, or one of them is irrational and the other is 0. In both cases, 
By Proposition~\ref{lem:001}, 
\eqref{E:19}, and hence \eqref{targetw} too, is nonzero if  $r\notin\mathbb{Q}_{\ast}$, in which case $([p_{1}(n)])_n, ([p_{2}(n)])_n$ are not $W!$-jointly ergodic for $(X,\mathcal{B},\mu,T)$.

\medskip
 
\noindent The remaining case is when (\ref{equ:assumption}) holds and $r\in\mathbb{Q}_{\ast}$. 
Note that $c_{1},c_{2}\notin\mathbb{Q}$.

Let $g:=\frac{q'}{d_{1}r}$ and $f:=-\frac{q}{r}$.
By (\ref{equ:assumption}),
\[p_{1}=\frac{\det\begin{bmatrix} 
q & c_{2} \\
q' & d_{2} 
\end{bmatrix}}{r}=\frac{\det\begin{bmatrix} 
q & (c_{1}d_{2}-r)/d_{1} \\
q' & d_{2} 
\end{bmatrix}}{r}=-d_{2}\left(f+\Big(c_{1}-\frac{r}{d_{2}}\Big)g\right),\]
and
\[p_{2}=\frac{\det\begin{bmatrix} 
c_{1} & q\\
d_{1} & q' 
\end{bmatrix}}{r}=d_{1}(f+c_{1}g).\]
To complete the proof that $(p_{1},p_{2})$ is of Type-B for $(X,\mathcal{B},\mu,T),$ it suffices to check that $q',$ and thus $g,$ is not constant (hence $0$, since $p_1(0)=p_2(0)=0$). Suppose on the contrary that $q'\equiv 0$. Then, by (\ref{equ:assumption}), we have $p_1=-\frac{d_2}{d_1}p_2$. Again by (\ref{equ:assumption}),
$q=c_1p_1+c_2p_2=-\frac{r}{d_{1}}p_{2},$
which is impossible since $q\in\mathbb{Q}[x]$ and $p_{2}\notin\mathbb{Q}[x]$.
 \end{proof}

\subsection{Proof of Theorem~\ref{cor6}} 

In this subsection, we prove Theorem~\ref{cor6}. Rescaling relevant coefficients and polynomials if necessary, it is not hard to see that   condition (ii) in Theorem~\ref{cor6} is equivalent to the following:

\begin{enumerate}
    \item[$(\text{ii})'$] $p_{1}=u_{1}(f+cg), p_{2}=u_{2}(f+(c+d)g)$ for some $f,g\in\mathbb{Q}[x],$ where $f$ is not a multiple of $g$,  $f(0)=g(0)=0,$ $g\not\equiv 0,$ $c\in \mathbb{R}\backslash\mathbb{Q},$ $u_{1},u_{2}\in\mathbb{Z}$ with $u_{1},u_{2}=\pm 1,$ and
 $d\in\mathbb{Q}_\ast$ such that $dg$ is an integer polynomial.
\end{enumerate}

 In the rest of this section, we will work with condition (ii)$'$ instead of (ii) as it is notation-wise closer to  the one of Type-B pairs.

Since Theorem~\ref{cor6} deals with total joint ergodicity, analogously to \eqref{target} and \eqref{targetw} that we introduced in Subsections~\ref{Ss:main_comp} and ~\ref{Ss:TB} respectively, for $W\in \mathbb{N}$ and $r\in \mathbb{Z},$ we will deal with
\begin{equation}\label{targetwr}
    \lim_{N\to\infty}\frac{1}{N}\sum_{n=1}^{N}e\Bigl(\sum_{i=1}^{2}t_{i}[p_{i}(Wn+r)]\Bigr).
\end{equation}

\subsubsection{Sufficiency}
We first prove the sufficiency part of Theorem~\ref{cor6}.
Assume that both of its conditions fail. 
We start by reducing the problem to the Type-B case.
By the failure of condition (i), $p_{1},p_{2}$ are $\mathbb{R}\backslash\mathbb{Q}_{\ast}$-dependent,
so $c_{1}p_{1}+c_{2}p_{2}\in \mathbb{Q}[x]$ for some $c_{1},c_{2}\in \mathbb{R}\backslash\mathbb{Q}_{\ast},$ not both equal to 0.

By Lemma \ref{construct}, there exists $D\in\mathbb{N}$ and a totally ergodic system $(X,\mathcal{B},\mu,T)$ such that $Dc_{1},Dc_{2}\in S(T)$. Since $Dc_{1},Dc_{2}\notin \mathbb{Q}_{\ast}$,
we have that
$p_{1},p_{2}$ are $S(T)\backslash\mathbb{Q}_{\ast}$-dependent. By Proposition~\ref{case2}, either $([p_{1}(n)])_n,$ $([p_{2}(n)])_n$ are not $W!$-jointly ergodic for $(X,\mathcal{B},\mu,T)$ for some sufficiently large $W$ depending on $p_{1},p_{2},$ and thus $([p_{1}(n)])_n,$ $([p_{2}(n)])_n$ are not totally jointly ergodic for $(X,\mathcal{B},\mu,T)$, or 
 there exist $f,g\in\mathbb{Q}[x],$ $f(0)=g(0)=0,$ $g\not\equiv 0,$   $c\in S(T)\backslash\mathbb{Z},$ $u_{1},u_{2}\in\mathbb{Z}_{\ast}$ with $\text{gcd}(u_{1},u_{2})=1,$ and 
 $d\in\mathbb{Q}_\ast$ 
 such that  
\begin{equation*}
\begin{split}
p_{1}=u_{1}(f+cg),\;\; p_{2}=u_{2}(f+(c+d)g).
\end{split}
\end{equation*}
Note that in particular, $c\in\mathbb{R}\backslash\mathbb{Q}$.

\medskip

We will construct another totally ergodic system $(X',\mathcal{B}',\mu',T')$ for which $([p_{1}(n)])_n,$ $([p_{2}(n)])_n$ are not totally jointly ergodic. We have the following cases.

\medskip

\noindent \textbf{Case 1. $f$ is a multiple of $g$.}
Here, we can rewrite the polynomials as 
\[p_1=u_1c'g',\;\;\;p_2=u_2(c'+d')g'\]
for some $g'\in\mathbb{Q}[x],$ $g'(0)=0,$ 
$g'\not\equiv 0,$ $c'\in\mathbb{R}\backslash\mathbb{Q}$, $d'\in\mathbb{Q}_{\ast}$.
Let $t_{1}, t_{2}$ be irrational numbers such that the following conditions are satisfied:
\begin{itemize}
    \item $t_{1}u_{1}c'+t_{2}u_{2}(c'+d')=c'=sc'+t$ (for $s=1$ and $t=0$);
    \item $\frac{t_{2}d'}{u_1c'}\notin \mathbb{Z}$; 
    \item $t_{1}, t_{2}, c'$ and 1 are $\mathbb{Q}$-independent.\footnote{ This can be easily seen as follows: We want to pick $t_2\notin\mathbb{Q}$ such that $t_{1}u_{1}c'+t_{2}u_{2}(c'+d')=c',$ $d't_2\neq kc'u_1$ for all $k\in \mathbb{Z},$ and 
$s_1t_1+s_2t_2+s_3c'\neq s_0$ for all $s_0,s_1,s_2,s_3\in\mathbb{Q}$ not all 0. This is equivalent in picking $t_2$ irrational with 
$(s_2u_1c'-s_1u_2(c'+d'))t_2\neq s_0u_1c'-s_1c'-s_3u_1(c')^2\;\;\text{for all}\;(s_0,s_1,s_2,s_3)\in\mathbb{Q}^4\backslash\{(0,0,0,0)\},\;\text{and} \;d't_2\neq kc'u_1\;\text{for all}\;k\in \mathbb{Z}.$ It is obvious that there are uncountably many such choices.}
\end{itemize}

By Lemma~\ref{construct}, there exists a totally ergodic system
   $(X',\mathcal{B}',\mu',T')$  such that $S(T')=\text{span}_{\mathbb{Z}}\{t_{1},t_{2},c',1\}.$ 
Then, $(p_{1},p_{2})$ is of Type-B for $(X',\mathcal{B}',\mu',T')$. Since $s=1\in \mathbb{Z}$ and $\frac{t-t_2u_2d'}{u_1u_2c'}=-\frac{t_2d'}{u_1c'}\notin\mathbb{Z},$ we have, by Proposition~\ref{P:Case 2} (ii) and Lemma~\ref{abc_general3},  that 
there exists $W_{0}$ depending only on $p_{1},p_{2},t_{1}$ and $t_{2}$ such  
 that $([p_{1}(n)])_n, ([p_{2}(n)])_n$ are not $W!$-jointly ergodic for $(X',\mathcal{B}',\mu',T')$ for all $W\geq W_{0}$. So  $([p_{1}(n)])_n, ([p_{2}(n)])_n$ are  not totally jointly ergodic for $(X',\mathcal{B}',\mu',T')$.

\medskip

\noindent  \textbf{Case 2. $f$ is not a multiple of $g$.} Here we have two further sub-cases. 

\medskip

\noindent \textbf{Case 2.1. $\vert u_{1}u_{2}\vert>1$.}
 
Since $\vert u_{1}u_{2}\vert>1$,
there exists $a\in \mathbb{Q}$ such that $\frac{ad}{u_{1}}\notin\mathbb{Z}_{\ast}$ and $au_{2}d\in\mathbb{Z}$ (e.g., one can simply take $a=\frac{1}{u_{2}d}$).
By Lemma \ref{construct}, there exists a totally ergodic system  $(X',\mathcal{B}',\mu',T')$ such that $S(T')=\text{span}_{\mathbb{Z}}\Big\{c':=\frac{c}{M},1\Big\}$, 
where $M\in\mathbb{N}$ will be chosen later. 
We may then rewrite
$$p_{1}=u_{1}(f+c'g'),\;\; p_{2}=u_{2}(f+(c'+d')g'),$$
where $g'=Mg$ and $d'=\frac{d}{M}$.
Let $a'=aM$, $t_{1}=-\frac{u_{2}}{u_{1}}a'c'=-\frac{aMu_{2}}{u_{1}}c'$ and $t_{2}=a'c'=aMc'$.
  If we pick $M$ so that $\frac{aM}{u_{1}}\in\mathbb{Z}$, then $t_{1},t_{2}\in S(T')$. Note that $t_{1}u_{1}+t_{2}u_{2}=0$.
On the other hand,
note that $\frac{a'd'}{u_{1}}=\frac{ad}{u_{1}}\notin\mathbb{Z}_{\ast}$ and $a'u_{2}d'=au_{2}d\in\mathbb{Z}$. So, by Proposition~\ref{P:Case 1} (iii) and Lemma~\ref{abc_general3}, 
there exists $W_{0}$ depending only on $p_{1},p_{2},t_{1}$ and $t_{2}$ such  
 that $([p_{1}(n)])_n, ([p_{2}(n)])_n$ are not $W!$-jointly ergodic for $(X',\mathcal{B}',\mu',T')$ for all $W\geq W_{0}$. So $([p_{1}(n)])_n, ([p_{2}(n)])_n$ are not totally jointly ergodic for $(X',\mathcal{B}',\mu',T')$.

\medskip

\noindent \textbf{Case 2.2. $\vert u_{1}u_{2}\vert=1$.}
 
In this case, we may assume without loss of generality that $u_{1}=1$ and $u_{2}=\pm 1$. 
 Since condition (ii)$'$ fails, we have that 
  $dg(r)\notin\mathbb{Z}$ for some $r\in\mathbb{Z}$.
 Let $W\in\mathbb{N}$ be such that $f(Wn+r)-f(r),$ $dg(Wn+r)-dg(r)\in\mathbb{Z}$ for all $n\in \mathbb{N}.$ By Lemma \ref{construct}, there exists a totally ergodic system $(X',\mathcal{B}',\mu',T')$ such that $c/d\in S(T')$.

\medskip
 
\noindent We first consider the case $u_{2}=1$.
 Let $t_{2}=-t_{1}=c/d$. Then, 
 (\ref{targetwr}) is equal to  
 \begin{equation}\label{dge2}
     \begin{split}
         &\lim_{N\to\infty}\frac{1}{N}\sum_{\substack{m=Wn+r, \\  1\leq n\leq N}}e\Bigl(-t_{1}\{f(m)+cg(m)\}-t_{2}\{f(m)+(c+d)g(m)\}\\
&\quad\quad\quad\quad\quad\quad\quad\quad\quad\quad\quad\quad\quad\quad\quad\quad\quad +\{(t_{1}+t_{2})(f(m)+cg(m))+t_{2}dg(m)\}\Bigr)
         \\&=\lim_{N\to\infty}\frac{1}{N}\sum_{\substack{m=Wn+r, \\ 1\leq n\leq N}}e\Bigl(\frac{c}{d}(\{f(r)+cg(m)\}-\{f(r)+dg(r)+cg(m)\})+\{cg(m)\}\Bigr).
     \end{split}
 \end{equation}
 By Weyl's criterion, we have that $(\{cg(Wn+r)\})_{n}$ is equidistributed on $\mathbb{T}$.
 
 If $\{f(r)\}+\{dg(r)\}<1$, then (\ref{dge2}) is equal to 
 \begin{equation}\nonumber
     \begin{split}
         & \lim_{N\to\infty}\frac{1}{N}\sum_{\substack{m=Wn+r, \\ 1\leq n\leq N}}e\Bigl(\frac{c}{d}({\bf{1}}_{\{cg(m)\}\in(1-\{f(r)\}-\{dg(r)\},1-\{f(r)\})}-\{dg(r)\})+\{cg(m)\}\Bigr)
         \\&=\int_{0}^{1}e\Bigl(\frac{c}{d}({\bf{1}}_{x\in(1-\{f(r)\}-\{dg(r)\},1-\{f(r)\})}-\{dg(r)\})+x\Bigr)\,dx 
         \\&=\int_{1-\{f(r)\}-\{dg(r)\}}^{1-\{f(r)\}}e\Bigl(\frac{c}{d}(1-\{dg(r)\})+x\Bigr)\,dx+\int_{1-\{f(r)\}}^{2-\{f(r)\}-\{dg(r)\}}e\Bigl(-\frac{c}{d}\{dg(r)\}+x\Bigr)\,dx
         \\&=\frac{e(\frac{c}{d})-1}{2\pi i}e\Big(-\{f(r)\}-\frac{c}{d}\{dg(r)\}\Big)\big(1-e(-\{dg(r)\})\big).
     \end{split}
 \end{equation}
 
  If $\{f(r)\}+\{dg(r)\}\geq 1$, then (\ref{dge2}) is equal to
 \begin{equation}\nonumber
     \begin{split}
         & \lim_{N\to\infty}\frac{1}{N}\sum_{\substack{m=Wn+r, \\ 1\leq n\leq N}}e\Bigl(\frac{c}{d}({\bf{1}}_{\{cg(m)\}\notin(1-\{f(r)\},2-\{f(r)\}-\{dg(r)\})}-\{dg(r)\})+\{cg(m)\}\Bigr)
         \\&=\int_{0}^{1}e\Bigl(\frac{c}{d}({\bf{1}}_{x\notin(1-\{f(r)\},2-\{f(r)\}-\{dg(r)\})}-\{dg(r)\})+x\Bigr)\,dx 
         \\&=\Big(\int_{0}^{1-\{f(r)\}}+\int_{2-\{f(r)\}-\{dg(r)\}}^{1}\Big)e\Bigl(\frac{c}{d}(1-\{dg(r)\})+x\Bigr)\,dx\\
&\quad\quad\quad\quad\quad\quad\quad\quad\quad\quad\quad\quad\quad\quad\quad\quad\quad\quad +\int_{1-\{f(r)\}}^{2-\{f(r)\}-\{dg(r)\}}e\Bigl(-\frac{c}{d}\{dg(r)\}+x\Bigr)\,dx
         \\&=\int_{2-\{f(r)\}-\{dg(r)\}}^{2-\{f(r)\}}e\Bigl(\frac{c}{d}(1-\{dg(r)\})+x\Bigr)\,dx+\int_{1-\{f(r)\}}^{2-\{f(r)\}-\{dg(r)\}}e\Bigl(-\frac{c}{d}\{dg(r)\}+x\Bigr)\,dx
         \\&=\frac{e(\frac{c}{d})-1}{2\pi i}e\Big(-\{f(r)\}-\frac{c}{d}\{dg(r)\}\Big)\big(1-e(-\{dg(r)\})\big).
     \end{split}
 \end{equation}
 In both cases above, since $dg(r)\notin \mathbb{Z}$, the averages are nonzero. So $([p_{1}(n)])_n, ([p_{2}(n)])_n$ are not  totally jointly ergodic for $(X',\mathcal{B}',\mu',T')$.

\medskip
  
\noindent Finally, we consider the case $u_{2}=-1$.
Let $t_{2}=t_{1}=c/d$. Using the fact that $[-x]+[x]=-1$ for all $x\in\mathbb{R}\backslash\mathbb{Z}$, we may invoke the previous computation to conclude that (\ref{targetwr}) is equal to  
 $$\frac{-e(\frac{c}{d})\big(e(\frac{c}{d})-1\big)}{2\pi i}e\Big(-\{f(r)\}-\frac{c}{d}\{dg(r)\}\Big)\big(1-e(-\{dg(r)\})\big),$$
 i.e., it differs from the value of the first case by a the factor   $-e(\frac{c}{d})$. So, we conclude that $([p_{1}(n)])_n,$ $([p_{2}(n)])_n$ are not  totally jointly ergodic for $(X',\mathcal{B}',\mu',T')$.
 
\subsubsection{Necessity} 
To prove the necessity part of Theorem~\ref{cor6}, we deal with the following two cases. 
 
 \medskip
 
\noindent  \textbf{Condition (i) holds.}
 Since $p_{1},p_{2}$ are $\mathbb{R}\backslash\mathbb{Q}_{\ast}$-independent, then, for any totally ergodic system $(X,\mathcal{B},\mu,T)$, $p_{1},p_{2}$ are $S(T)\backslash\mathbb{Z}_{\ast}$-independent and the result follows from Proposition~\ref{thm1}.

\medskip

\noindent  \textbf{Condition (ii)$'$ holds.}
Assume without loss of generality that $u_1=1.$ 
  It suffices to show that for any totally  ergodic system $(X,\mathcal{B},\mu,T)$, $t_{1},t_{2}\in S(T)$ not both in $\mathbb{Z},$ $W\in \mathbb{N},$ and $r\in\mathbb{Z},$ we have that (\ref{targetwr}) equals 0.  
  
  We first assume that $u_{2}=1$.
 Then, (\ref{targetwr}) equals to  
 \begin{equation}\label{dge}
     \begin{split}
         &\lim_{N\to\infty}\frac{1}{N}\sum_{\substack{m=Wn+r, \\ 1\leq n\leq N}}e\Bigl(-t_{1}\{f(m)+cg(m)\}-t_{2}\{f(m)+(c+d)g(m)\}\\
&\quad\quad\quad\quad\quad\quad\quad\quad\quad\quad\quad\quad\quad\quad\quad\quad\quad\quad\quad\quad\quad +\{(t_{1}+t_{2})(f(m)+cg(m))+t_{2}dg(m)\}\Bigr)
         \\&=\lim_{N\to\infty}\frac{1}{N}\sum_{\substack{m=Wn+r, \\ 1\leq n\leq N}}e\Bigl(-(t_{1}+t_{2})\{f(m)+cg(m)\}+\{(t_{1}+t_{2})(f(m)+cg(m))+t_{2}dg(m)\}\Bigr).
     \end{split}
 \end{equation}
 
 We first consider the case $t_{1}+t_{2}\in\mathbb{Q}$. Since $X$ is totally ergodic, we have that $t_{1}+t_{2}\in\mathbb{Z}$.
 Since (\ref{targetwr}) remains unchanged if we replace $t_{1}$ by $t_{1}+k$ for any $k\in\mathbb{Z}$,
 we may then assume without loss of generality that $t_{1}+t_{2}=0.$   Then, (\ref{dge}) equals to
 \[\lim_{N\to\infty}\frac{1}{N}\sum_{\substack{m=Wn+r, \\ 1\leq n\leq N}}e\big(t_{2}dg(m)\big),\]
 which by Weyl's criterion converges to 0 if $t_{2}\notin\mathbb{Q}$.  Actually, the latter holds since if  $t_{2}\in\mathbb{Q}$, then $t_{2}\in\mathbb{Z}$ by total ergodicity. So, we also have $t_{1}\in\mathbb{Z}$, a contradiction.
 
 We next consider the case $t_{1}+t_{2}\notin\mathbb{Q}$.
 Suppose that 
 \[(a+b(t_1+t_2))f+((a+b(t_1+t_2))c+bt_2d)g=a(f+cg)+b((t_{1}+t_{2})(f+cg)+t_{2}dg)\in\mathbb{Q}[x]\]
 for some $a,b\in\mathbb{Q}$. By Lemma \ref{c1}, which can be used as $f$ is not a multiple of $g,$ we have that $a+b(t_{1}+t_{2})\in\mathbb{Q}$. This implies that $b=0,$ which in turn forces $a=0$.   
 Therefore, $\big(\{(f+cg)(Wn+r)\},\{((t_{1}+t_{2})(f+cg)+t_{2}dg)(Wn+r)\}\big)_{n}$ is equidistributed on $\mathbb{T}^{2}$ by  Weyl's criterion. So, (\ref{dge}) equals to
 \[\int_{\mathbb{T}^{2}}e\big(-(t_{1}+t_{2})x+y\big)\,dxdy=0,\] as was to be shown.
 
 In the case where $u_{2}=-1,$ notice that for all $t_{1},t_{2}\in\mathbb{R}$ and $n\in\mathbb{N}$, we have that 
 $$e(t_{1}[p_{1}(n)]+t_{2}[p_{2}(n)])=e(t_{1}[p_{1}(n)]-t_{2}[-p_{2}(n)])\cdot e(-t_{2}).$$
We may use the previous case to conclude the proof, i.e., for any totally  ergodic system $(X,\mathcal{B},\mu,T)$, $t_{1},t_{2}\in S(T)$ not both in $\mathbb{Z},$ and $W\in \mathbb{N}, r\in \mathbb{Z}$, we have that (\ref{targetwr}) equals 0.

\section{Appendix: Two interesting examples}\label{app}

 In this appendix, we provide two examples illustrating how minor changes in the polynomial iterates can essentially affect their joint ergodicity properties. %In particular, we prove Propositions \ref{apd} and  \ref{apd_2}.

\medskip

The first one shows that, for some $a\in \mathbb{R},$ $([p_{1}(n)])_n,$ $([p_{2}(n)])_n$ can behave differently than  $([p_{1}(n)])_n,$ $([p_{2}(n)+a])_n.$ 

\begin{proposition}\label{apd}
There exist $p_{1},p_{2}\in\mathbb{R}[x]$ and $a\in\mathbb{R}$ such that $([p_{1}(n)])_n,$ $([p_{2}(n)])_n$ are totally jointly ergodic for all totally ergodic systems, and there exists 
a totally ergodic system $(X,\mathcal{B},\mu,T)$, such that for all $W\in \mathbb{N}$ and $r\in \mathbb{Z},$  $([p_{1}(Wn+r)])_n,$ $([p_{2}(Wn+r)+a])_n$ are not jointly ergodic for $(X,\mathcal{B},\mu,T)$.
\end{proposition}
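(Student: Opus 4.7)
The plan is to give a concrete polynomial pair fitting condition~(ii) of Theorem~\ref{cor6}, together with an irrational shift $a$ and an explicit totally ergodic rotation on which the shifted pair visibly fails to be good for equidistribution in the sense of Theorem~\ref{T:1.1_Nikos}. I would take
\[
p_1(n) = n^2 + cn, \qquad p_2(n) = n^2 + (c+1)n, \qquad c \in \mathbb{R}\setminus\mathbb{Q},
\]
and any $a \in (0,1)\setminus\mathbb{Q}$. First I would verify that $(p_1,p_2)$ falls under Theorem~\ref{cor6}(ii) by taking $f(n)=n^2$, $g(n)=n$, and $u=1$: then $f,g\in\mathbb{Q}[x]$ vanish at $0$, $g$ is an integer polynomial with $g\not\equiv 0$, and $f$ is not a scalar multiple of $g$. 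Theorem~\ref{cor6} therefore yields that $([p_1(n)])_n,([p_2(n)])_n$ is totally jointly ergodic for every totally ergodic system.

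For the witness system I would take $(X,\mathcal{B},\mu,T)=(\mathbb{T},\mathrm{Borel},\mathrm{Leb},x\mapsto x+c)$, which is totally ergodic since $c\notin\mathbb{Q}$, and I would test joint ergodicity against the eigenfunctions $f_1(x)=e(x)$ and $f_2(x)=e(-x)$, both of integral zero, which satisfy $T^{k}f_1=e(kc)f_1$ and $T^{k}f_2=e(-kc)f_2$. Writing $m:=Wn+r$ and using that $m^2, m^2+m \in \mathbb{Z}$ (so $\{p_1(m)\}=\{cm\}$ and $\{p_2(m)+a\}=\{cm+a\}$), the average
\[
\frac{1}{N}\sum_{n=1}^N T^{[p_1(Wn+r)]}f_1 \cdot T^{[p_2(Wn+r)+a]}f_2
\]
reduces, via $[x]=x-\{x\}$, to the scalar-valued sum $\frac{1}{N}\sum_{n=1}^N e(-\{cm\})\,G_a(\{cm\})$, where $G_a(y):=\mathbf{1}_{y<1-a}+e(-c)\mathbf{1}_{y\geq 1-a}$ encodes the carry produced by adding $a$ to a fractional part (since $\{y+a\}-\{y\}\in\{a,a-1\}$ for $y\notin\mathbb{Z}$).

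Finally, since $cW$ is irrational for every $W\in\mathbb{N}$, Weyl's criterion gives that $(\{c(Wn+r)\})_n$ is equidistributed on $\mathbb{T}$, so the average above converges to
\[
\int_0^1 e(-y)\,G_a(y)\,dy \;=\; \frac{1-e(a)}{2\pi i}\bigl(1-e(-c)\bigr),
\]
a quantity independent of $W$ and $r$, and nonzero because both $a$ and $c$ are irrational. Since this limit does not equal the expected value $\int f_1\,d\mu\cdot\int f_2\,d\mu=0$, joint ergodicity fails for $(X,\mathcal{B},\mu,T)$ along every arithmetic progression $Wn+r$. The principal point to watch is conceptual rather than technical: the same calculation applied to the \emph{unshifted} pair produces the integrand $e(-y)$ in place of $e(-y)G_a(y)$, whose integral over $\mathbb{T}$ vanishes, so the carry factor $G_a$ is precisely the obstruction introduced by the shift $a$; the remaining bookkeeping (dropping integer summands from fractional parts, and translating the equidistribution argument from $cm$ to $cWn+cr$ by 1-periodicity) is routine.
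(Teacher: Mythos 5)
Your proposal is correct and follows essentially the same route as the paper: the same pair $p_1(n)=n^2+cn$, $p_2(n)=n^2+(c+1)n$ handled by Theorem~\ref{cor6}(ii), the same rotation $x\mapsto x+c$ on $\mathbb{T}$ as witness, and the same eigenfunction/exponential-sum computation (the paper simply fixes $a=\tfrac14$ and writes the test directly as a Weyl sum with coefficients $\mp c$, arriving at the conjugate of your limit value). The only cosmetic difference is your packaging of the carry term as $G_a$ and allowing an arbitrary $a\in(0,1)\setminus\mathbb{Q}$.
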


\begin{proof}%[Proof of Proposition \ref{apd}]
Let $W\in\mathbb{N},$ $r\in\mathbb{Z},$ $c\in\mathbb{R}\backslash\mathbb{Q},$ and 
 \[p_{1}(n)=n^{2}+cn,\;\;p_{2}(n)=n^{2}+(c+1)n.\]
	By Theorem~\ref{cor6}, $([p_{1}(n)])_n, ([p_{2}(n)])_{n}$ are totally jointly ergodic for all totally ergodic systems. 
	
	On the other hand, let $(X,\mathcal{B},\mu,T)=(\mathbb{T},\mathcal{B},m,T)$, where $m$ is the Haar measure on $\mathbb{T}$ and $Tx=x+c\mod 1$ for all $x\in\mathbb{T}$. Then
	\begin{equation}\label{exex1}
	\begin{split}
	&\lim_{N\to\infty}\frac{1}{N}\sum_{n=1}^{N}e\Bigl(-c[p_{1}(Wn+r)]+c\Big[p_{2}(Wn+r)+\frac{1}{4}\Big]\Bigr)
	\\&=\lim_{N\to\infty}\frac{1}{N}\sum_{n=1}^{N}e\Bigl(\frac{c}{4}+(c+1)\{c(Wn+r)\}-c\Big\{c(Wn+r)+\frac{1}{4}\Big\}\Bigr)
 \\&=\lim_{N\to\infty}\frac{1}{N}\sum_{n=1}^{N}\Bigl({\bf{1}}_{\{c(Wn+r)\}\in\big[0,\frac{3}{4}\big)}e\big(\{c(Wn+r)\}\big)
 +{\bf{1}}_{\{c(Wn+r)\}\in\big[\frac{3}{4},1\big)}e\big(c+\{c(Wn+r)\}\big)\Bigr).
	\end{split}
	\end{equation}
		Since $\big(c(Wn+r)\big)_{n}$ is equidistributed on $\mathbb{T}$ by Weyl's criterion, we have that (\ref{exex1}) is equal to 
    \begin{equation}\nonumber
	\begin{split} 
         \int_{0}^{\frac{3}{4}}e(t)\,dt+\int_{\frac{3}{4}}^{1}e(c+t)\,dt=\frac{1+i}{2\pi i}(e(c)-1)\neq 0.
	\end{split}
	\end{equation}
	So $\big([p_{1}(Wn+r)]\big)_n, \big([p_{2}(Wn+r)+\frac{1}{4}]\big)_{n}$ are not  jointly ergodic for $(X,\mathcal{B},\mu,T)$. 
	\end{proof}

The second example provides two polynomial sequences which, for all $W\geq 2,$ are $W!$-jointly ergodic for every totally ergodic system, but are not jointly ergodic for some totally ergodic system.

\begin{proposition}\label{apd_2}
		There exist  $p_{1},p_{2}\in\mathbb{R}[x]$ such that $([p_{1}(2n)])_n,$ $([p_{2}(2n)])_n$ are totally jointly ergodic for all totally ergodic systems, and there exists 
	a totally ergodic system $(X,\mathcal{B},\mu,T)$, such that $([p_{1}(n)])_n,$ $([p_{2}(n)])_n$ are not jointly ergodic for $(X,\mathcal{B},\mu,T).$ 
\end{proposition}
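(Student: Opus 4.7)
The plan is to set $p_1(n) = n + cn^2$ and $p_2(n) = n + (c+\tfrac{1}{2})n^2$ for an irrational $c$, and verify the two required properties separately.

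For the first property, substituting $n \mapsto 2n$ yields
\[ q_1(n) := p_1(2n) = 2n + 4cn^2, \qquad q_2(n) := p_2(2n) = 2n + 4cn^2 + 2n^2. \]
Setting $f(x) = 2x$, $g(x) = 4x^2$, $u_1 = u_2 = 1$, and $d = 1/2$, the pair $(q_1, q_2)$ fits the form $q_1 = u_1(f+cg)$, $q_2 = u_2(f+(c+d)g)$ of condition (ii)$'$ (equivalent to (ii) of Theorem~\ref{cor6}): $f, g \in \mathbb{Q}[x]$ both vanish at $0$, $g \not\equiv 0$, $f$ is not a multiple of $g$, and crucially $dg = 2x^2$ is an integer polynomial. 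Theorem~\ref{cor6} then yields total joint ergodicity of $([p_1(2n)])_n, ([p_2(2n)])_n$ on every totally ergodic system.

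For the second property, I would take the rotation system $(X,\mathcal{B},\mu,T) = (\mathbb{T}, m, R_c)$ with $R_c(x) = x + c \bmod 1$ (totally ergodic since $c \notin \mathbb{Q}$) and test against the eigenfunctions $f_1(x) = e(-2x)$, $f_2(x) = e(2x)$, whose integrals vanish. Joint ergodicity would force the exponential sum
\[ L_N := \frac{1}{N}\sum_{n=1}^{N} e\bigl(-2c[p_1(n)] + 2c[p_2(n)]\bigr) \]
to tend to $0$. Using $p_2(n) - p_1(n) = n^2/2$, the exponent simplifies to $cn^2 + 2c(\{p_1(n)\} - \{p_2(n)\})$. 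The key step is to split the sum by the parity of $n$: for even $n = 2k$ the fractional parts coincide and the contribution $\frac{1}{N}\sum_k e(4ck^2)$ vanishes by Weyl's criterion; for odd $n = 2k+1$ we have $\{p_2(n)\} = \{p_1(n) + 1/2\}$, producing a $\pm c$ shift depending on whether $\{4ck^2 + 4ck + c\}$ lies in $[0, 1/2)$ or $[1/2, 1)$. Applying Weyl equidistribution of $\beta_k := 4ck^2 + 4ck$ on $\mathbb{T}$ (Remark~\ref{rem1}(ii)), the odd part converges to a concrete integral over $\mathbb{T}$, which a short computation based on the identity $e(\theta) - 1 = 2i\sin(\pi\theta)e(\theta/2)$ evaluates to a nonzero multiple of $\sin(2\pi\{c\})$, and hence $L_N \not\to 0$.

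The main technical obstacle is the explicit evaluation of the odd-parity contribution. The indicator function $\mathbf{1}_{\{\beta + c\} < 1/2}$ leads to a small case split according to whether $\{c\}$ is less than or greater than $1/2$, and one must carefully track the endpoints of the resulting intervals when computing the integral. Irrationality of $c$ guarantees $\{c\} \notin \{0, 1/2\}$, which is precisely what keeps the limit nonzero.
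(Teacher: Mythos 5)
Your proposal is correct and follows essentially the same route as the paper's proof: the even subsequence is handled by verifying condition (ii) of Theorem~\ref{cor6}, and failure of joint ergodicity on the rotation by $c$ is detected by an explicit Weyl-type exponential sum whose odd-indexed part converges, via equidistribution, to a nonzero constant proportional to $\sin(2\pi c)$ (I checked the limit is $\sin(2\pi c)/\pi$). The differences are cosmetic: you take the quadratic pair $n+cn^2$, $n+(c+\tfrac{1}{2})n^2$ with eigenfunctions $e(\pm 2x)$ and split the full average by parity, while the paper takes $n^3+cn^2/4$, $n^3+(c+1)n^2/4$ with $e(\pm x)$ and argues along the progression $2n+1$ after noting the even progression is jointly ergodic.
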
 

\begin{proof}%[Proof of Proposition~\ref{apd_2}]	
Let  $c\in\mathbb{R}\backslash\mathbb{Q},$ and
	\[p_{1}(n)=n^{3}+\frac{cn^{2}}{4},\;\; p_{2}(n)=n^{3}+\frac{(c+1)n^{2}}{4}.\] 
	By Theorem~\ref{cor6},   $([p_{1}(2n)])_{n}$, $([p_{2}(2n)])_{n}$ are totally jointly ergodic for all totally ergodic systems. 
	
	Let $(X,\mathcal{B},\mu,T)=(\mathbb{T},\mathcal{B},m,T)$, where $m$ is the Haar measure on $\mathbb{T}$ and $Tx=x+c\mod 1$ for all $x\in\mathbb{T}$.
	We will show that $([p_{1}(n)])_n, ([p_{2}(n)])_{n}$ are not jointly ergodic for $(X,\mathcal{B},\mu,T).$
	Since  $([p_{1}(2n)])_n, ([p_{2}(2n)])_{n}$ are  jointly ergodic for $(X,\mathcal{B},\mu,T)$, it suffices to show that 
	$([p_{1}(2n+1)])_n, ([p_{2}(2n+1)])_{n}$ are not jointly ergodic for $(X,\mathcal{B},\mu,T).$
	To this end, notice that 
	\begin{equation}\label{exex2}
	\begin{split}
	& \lim_{N\to\infty}\frac{1}{N}\sum_{n=1}^{N}e\Bigl(-c[p_{1}(2n+1)]+c[p_{2}(2n+1)]\Bigr)
	\\&=\lim_{N\to\infty}\frac{1}{N}\sum_{n=1}^{N}e\Bigl((c+1)\Big\{\frac{c(2n+1)^{2}}{4}\Big\}-c\Big\{\frac{c(2n+1)^{2}}{4}+\frac{1}{4}\Big\}\Bigr)
 \\&=\lim_{N\to\infty}\frac{1}{N}\sum_{n=1}^{N}\left({\bf{1}}_{\Big\{\frac{c(2n+1)^{2}}{4}\Big\}\in\big[0,\frac{3}{4}\big)}e\Big(\Big\{\frac{c(2n+1)^{2}}{4}\Big\}\Big)\right.\\
&\quad\quad\quad\quad\quad\quad\quad\quad\quad\quad\quad\quad \left.+{\bf{1}}_{\Big\{\frac{c(2n+1)^{2}}{4}\Big\}\in\big[\frac{3}{4},1\big)}e\Big(c+\Big\{\frac{c(2n+1)^{2}}{4}\Big\}\Big)\right)e\Big(-\frac{c}{4}\Big).
	\end{split}
	\end{equation}
	Since $c\notin\mathbb{Q}$, $\Big(\frac{c(2n+1)^{2}}{4}\Big)_{n}$ is equidistributed on $\mathbb{T}$ by Weyl's criterion. 
 So, \eqref{exex2} is equal to 
 \begin{equation}\nonumber
 	\begin{split} 
         e\Big(-\frac{c}{4}\Big)\Big(\int_{0}^{\frac{3}{4}}e(t)\,dt+\int_{\frac{3}{4}}^{1}e(c+t)\,dt\Big)=\frac{1+i}{2\pi i}e\Big(-\frac{c}{4}\Big)(e(c)-1)\neq 0.
	\end{split}
	\end{equation}
Thus $([p_{1}(n)])_n, ([p_{2}(n)])_{n}$ are not jointly ergodic for $(X,\mathcal{B},\mu,T)$. 
	\end{proof}	
	
\noindent {\bf{Acknowledgements.}} We would like to thank Nikos Frantzikinakis for many helpful comments during the writing of this article, and Chrysostomos Psaroudakis for his comments on the introduction of an earlier version.

\end{document}